
\documentclass{amsart}
\usepackage[utf8]{inputenc}
\usepackage{xcolor}


\usepackage{enumerate} 
\usepackage{upref}
\usepackage{verbatim} 
\usepackage{color}
\usepackage{graphicx}
\usepackage{mathtools,amsmath,amsfonts,amssymb,amsthm, bbm, dsfont}
\usepackage{hyperref}

\usepackage[textsize=tiny]{todonotes}


\newcommand*{\mailto}[1]{\href{mailto:#1}{\nolinkurl{#1}}}

\usepackage{amssymb, amsmath, amsfonts, amsthm}
\usepackage[all]{xy}

\usepackage{marginnote}

\newcommand{\eps}{\epsilon}
\newcommand{\R}{\mathbb{R}}

\newcommand{\N}{\mathbb{N}}

\newcommand{\mA}{\mathcal{A}}
\newcommand{\mB}{\mathcal{B}}
\newcommand{\mD}{\mathcal{D}}
\newcommand{\mF}{\mathcal{F}}
\newcommand{\mG}{\mathcal{G}}
\newcommand{\Cc}{C_c^\infty}

\newcommand{\supp}{\text{\normalfont{supp}}}

\newcommand{\hlf}{\frac{1}{2}}
\newcommand{\bbo}{\mathds{1}}

\newcommand{\id}{\text{\normalfont{id}}}
\newcommand{\Lag}{\hat{L}} 
\newcommand{\Eul}{\hat{M}} 

\theoremstyle{plain}
\newtheorem{thm}{Theorem}[section]
\newtheorem{lem}[thm]{Lemma}
\newtheorem{prop}[thm]{Proposition}
\newtheorem{defn}[thm]{Definition}

\newtheorem{exmp}[thm]{Example}
\newtheorem{cor}[thm]{Corollary}
\theoremstyle{plain}

\newtheorem*{note}{Note}


\usepackage{graphicx}
\usepackage{caption}
\usepackage{subcaption}
\graphicspath{ {./figs/} }
\usepackage[outdir=./figs/]{epstopdf}


\usepackage{mdframed}
\usepackage{marginnote}
\newmdenv[
  topline=false,
  bottomline=false,
  skipabove=\topsep,
  skipbelow=\topsep,
  leftmargin=-10pt,
  rightmargin=-10pt,
  innertopmargin=0pt,
  innerbottommargin=0pt
]{siderules}

\begin{document}
\title[A Lipschitz Metric for the HS equation]{A Lipschitz Metric for $\alpha$-Dissipative Solutions to the Hunter--Saxton Equation}

\author[K. Grunert]{Katrin Grunert}
\address{Department of Mathematical Sciences\\ NTNU Norwegian University of Science and Technology\\ NO-7491 Trondheim\\ Norway}
\email{\mailto{katrin.grunert@ntnu.no}}
\urladdr{\url{https://www.ntnu.edu/employees/katrin.grunert}}

\author[M. Tandy]{Matthew Tandy}
\address{Department of Mathematical Sciences\\
NTNU Norwegian University of Science and Technology\\
NO-7491 Trondheim\\ Norway}
\email{\mailto{matthew.tandy@ntnu.no}}
\urladdr{\url{https://www.ntnu.edu/employees/matthew.tandy}}

\thanks{We acknowledge support by the grants {\it Waves and Nonlinear Phenomena (WaNP)} and {\it Wave Phenomena and Stability - a Shocking Combination (WaPheS)}  from the Research Council of Norway. }  
\subjclass[2020]{Primary: 35B35, 37L15; Secondary: 35Q35, 35L67.}
\keywords{Hunter--Saxton equation, Lipschitz stability, $\alpha$-dissipative solutions}

\begin{abstract}
We explore the Lipschitz stability of solutions to the Hunter--Saxton equation with respect to the initial data. In particular, we study the stability of $ \alpha $-dissipative solutions constructed using a generalised method of characteristics approach, where $\alpha$ is a function determining the energy loss at each position in space.
\end{abstract}

\maketitle

\section{Introduction}
In this work we study particular solutions to the Hunter--Saxton equation, which is given by
\begin{equation}\label{eqn:HS}\tag{HS}
	u_t(x,t) + uu_x(x,t) 
	= \frac{1}{4}\Bigg(\int_{-\infty}^x u_x^2(y,t)\, dy - \int_x^{+\infty}u_x^2(y,t)\, dy\Bigg).
\end{equation}
To be precise, our goal is to define a metric for which $\alpha$-dissipative solutions, constructed using a generalised method of characteristics, are Lipschitz continuous with respect to the initial data.

Equation \eqref{eqn:HS} was first introduced by Hunter and Saxton to model nonlinear instability in the director field for nematic liquid crystals \cite{MR1135995}. The physical applications of this equation are not the focus of this paper, however.

Solutions to this equation may experience singularities in finite time. 
Specifically, a solution $u$ will remain bounded and continuous, 
while its spatial derivative will diverge to $-\infty$ at certain points.
Parts of the energy, calculated using the energy density function $u_x^2$,
initially spread over sets of positive measure,  
will concentrate onto sets of Lebesgue measure zero.
These singularities are referred to as ``wave-breaking'', and how one
treats the concentrated energy after these points in time determines the solution. 

Discarding the concentrated energy, one obtains dissipative solutions, 
for which existence and uniqueness have been shown \cite{MR2191785,MR2796054}. 
On the other hand, one could retain the energy, 
obtaining so called conservative solutions, 
in which case existence has been shown in \cite{MR2653980, MR3573580}, 
and uniqueness in \cite{MR4400175}. 
Finally, one could choose to remove an $\alpha$ proportion of the energy, 
with $\alpha \in [0, 1]$. 
These are known as $\alpha$-dissipative solutions, 
for whom existence has been established in \cite{MR3860266}. 

This paper focuses on the importance of the energy in the notion of a solution
to our problem. To be precise, an $\alpha$-dissipative solution to the Cauchy problem of \eqref{eqn:HS} is a pair $(u, \mu)$ satisfying
\begin{subequations}
\begin{align}
	u_t + u u_x 
	&= \frac{1}{4} 
	\left(
		\int_{-\infty}^x d\mu - \int_x^{+\infty} d\mu	
	\right)\\
	\mu_t + (u \mu)_x 
	&\leq 0,\label{ineq:measIneq}
\end{align}
\end{subequations}
in the distributional sense. 
The second measure valued PDE inequality tracks the energy, 
and correspondingly the variable $\mu$ is a positive
finite Radon measure representing the current energy.

To motivate where equation \eqref{ineq:measIneq} comes from,
formally consider $u \in C^2(\R\times [0, +\infty)) $, such that $\mu=u_x^2(\cdot, t) \in L^2(\R)$ for all times $t\geq 0$. Then
\begin{equation}\label{eqn:SmMPDE}
	(u_x^2)_t 
	= 2u_x u_{xt}
	= 2u_x ( - (uu_x)_x + \hlf u_x^2)
	= -u_x^3 - 2u u_x u_{xx} = -(uu_x^2)_x.
\end{equation}
In other words, equation \eqref{ineq:measIneq} is satisfied with equality, and
$\mu(t, \R) = \mu(0, \R)$ for all times $t \geq 0$. 
This is thus a fully conservative solution. 
In reality, global solutions experience weaker regularity than we have assumed here, due to wave-breaking. Furthermore, discarding part of the concentrated energy at wave breaking yields a loss of energy resulting in \eqref{ineq:measIneq}.

The prequel to this piece of work \cite{grunert2021lipschitz} takes $\alpha$ to be constant, and a Lipschitz metric in time was constructed. However, we had to assume that the two solutions one is measuring the distance between share the same $\alpha$. This paper continues this work, constructing a new Lipschitz stable metric for the case where $\alpha$ is now possibly different for both solutions, and is a function of space. In this scenario, the amount of energy lost is determined by the point where the energy concentrates.
In particular, we are looking for a metric $d$ that satisfies the estimate
\begin{equation}\label{ineq:desired}
	d\big((u_A(t), \mu_A(t)), (u_B(t), \mu_B(t))\big) 
	\leq e^{Ct}d\big((u_A(0), \mu_A(0)), (u_B(0), \mu_B(0))\big),
\end{equation}
for all $t \geq 0$. Here $ C\in\R $ is some positive constant.
The method we use is developed from \cite{MR3573580}, where a Lipschitz metric for conservative solutions has been found using ideas from \cite{MR2653980}.
An alternative construction making use of pseudo-inverses was employed in \cite{MR3941227}. 
In \cite{MR2191785}, a metric satisfying property \eqref{ineq:desired} has also been found for dissipative solutions, in addition to Lipschitz continuity with respect to the variable $t$. This metric uses an optimal transportation approach, constructing a Wasserstein / Kantorovich-Rubenstein inspired cost function over transportation plans, and minimising over said plans. 

A generalised method of characteristics is used to construct $\alpha$-dissipative solutions to \eqref{eqn:HS} and to define a metric. Up until wave breaking occurs, we can define our Lagrangian coordinates $ (y, U, V) $ by 
\begin{subequations}
\begin{align}
 y_t(\xi, t) &= u(y(\xi, t), t), \\
U(\xi, t) & = u(y(\xi, t), t),\\ \label{def:SmoothLagVar3}
V(\xi, t) &= \int_{-\infty}^{y(\xi, t)} u^2_x(z, t)\, dz, 
\end{align}
\end{subequations}
with $ \xi $ a parameter, the so called ``label'' of the characteristic $y(\xi,t)$.
From the classical sense of Lagrangian coordinates, we may sometimes refer to $\xi$ as a ``particle''.

This leads to an ODE system, given by
\begin{subequations}\label{eqn:SmoothLagSys}
	\begin{align}
	 y_t(\xi, t) &= U(\xi, t), \label{eqn:SmoothLagSysY}\\
	 	U_t(\xi, t) &= \frac{1}{2} V(\xi, t) - \frac{1}{4} \lim_{\xi\to\infty}V(\xi, t), \label{eqn:SmoothLagSysU}\\
		V_t(\xi, t) &= 0 \label{eqn:SmoothLagSysV}.
	\end{align}
\end{subequations}
Assuming that no wave breaking occurs at time zero, one can take as initial data $y(\xi, 0) = \xi$.
The first two variables $y$ and $U$ represent respectively the position and velocity 
of particles $\xi \in \R$ as usual, while the third variable $V$ corresponds to
the $\mu$ in Eulerian variables, and represents the cumulative energy up to particle $\xi$.

To demonstrate where the third ODE comes from, once again formally consider a sufficiently smooth solution $u$ such that \eqref{eqn:SmMPDE} is satisfied. Then, differentiating \eqref{def:SmoothLagVar3} with respect to time,
\begin{align*}
	V_t(\xi, t) 
	&= U(\xi, t)u_x^2(y(\xi, t), t) + \int_{-\infty}^{y(\xi,t)} (u_x^2(x, t))_t\, dx\\
	&= U(\xi, t)u_x^2(y(\xi, t), t) - U(\xi, t)u_x^2(y(\xi, t), t)
	= 0.
\end{align*}

Wave breaking in Lagrangian coordinates corresponds to a collection of 
characteristics colliding. Specifically, wave breaking occurs at 
time $ \tau(\xi) $ for $\xi \in \R$ when $y_\xi(\xi, \tau(\xi)) = 0$.
In the case of piecewise affine and continuous solutions in Lagrangian coordinates,
this corresponds to intervals where the function $y(\cdot,\tau)$ is constant.
The desire to characterise this behaviour at time zero is what prevents us from simply taking 
$y(\xi, 0) = \xi$, as such initial data does not contain concentrated particles initially. This problem is overcome by applying the mappings between Eulerian and Lagrangian coordinates, introduced in~\cite{MR2372478} in the context of the Camassa--Holm equation.

For a given $\xi \in \R$, the wave breaking time $\tau(\xi)$ can be calculated using the initial data for the ODE system \eqref{eqn:SmoothLagSys}. In particular,
\[
	\tau(\xi) = 
	\begin{cases}
		-2\frac{y_\xi(\xi, 0)}{U_\xi(\xi, 0)}, &\mbox\ U_\xi(\xi, 0) < 0,\\
		0, &\mbox\ U_\xi(\xi, 0) = 0 = y_\xi(\xi, 0),\\
		+\infty, &\mbox\ \text{otherwise}.
	\end{cases}
\]
For a fully conservative solution the system \eqref{eqn:SmoothLagSys} determines the solution for all time. On the opposite end of the spectrum, a fully dissipative solution corresponds to the system formed by equations \eqref{eqn:SmoothLagSysY} and \eqref{eqn:SmoothLagSysU}, but setting $V_\xi(\xi ,t) = 0$ for $t \geq \tau(\xi)$.

In the general case, $\alpha: \R \to [0, 1]$, 
the energy loss at wave breaking is dependent on the particles position at time $\tau(\xi)$, 
and is given by $\alpha(y(\xi, \tau(\xi)))$. The $\alpha$-dissipative solution is thus given by \eqref{eqn:SmoothLagSysY} and \eqref{eqn:SmoothLagSysU}, and setting
\[
	V(\xi, t) = \int_{-\infty}^\xi V_\xi(\eta, 0) \big(1 - \alpha(y(\eta, \tau(\eta))) \bbo_{ \{r \in \R \mid t \geq \tau(r) > 0 \} }(\eta) \big)\, d\eta.
\]
Using this, one obtains the conservative solution in the case $\alpha \equiv 0$,
and the fully dissipative solution in the case $\alpha \equiv 1$.

The construction of our metric will take advantage of the approachable properties of these
Lagrangian coordinates. The general idea is as follows.
First, we establish how one transforms between Eulerian and Lagrangian coordinates.
Second, we define a suitable metric in Lagrangian coordinates,
satisfying Lipschitz stability with respect to the initial Lagrangian data, 
similar to inequality \eqref{ineq:desired}.
Finally, we define a suitable metric in Eulerian coordinates by measuring the distance of the corresponding Lagrangian coordinates, inheriting the Lipschitz stability we require.

The paper is organised as follows. Section 2 begins with the setup of the relevant spaces for our problem, in both Lagrangian and Eulerian coordinates.

To solve our problem we will need to introduce a secondary dummy measure $\nu$. This will also be a positive finite Radon measure, which is bounded from below by $\mu$ and which plays a key role when defining the transformations between Eulerian and Lagrangian coordinates. In Lagrangian variables this will correspond to a function $H$. Importantly, $ \nu $ is a necessity in the construction of our Lipschitz metric, but does not form part of the solution to \eqref{eqn:HS}. Therefore we will need to consider equivalence classes with respect to $\nu$ when constructing our metric in Eulerian coordinates.

Energy concentrating on sets of measure zero must be accounted for in the definition of the initial characteristics. Thus the next step in Section 2 is to 
introduce a mapping from Eulerian to Lagrangian coordinates, and vice versa, that account for this initial energy concentration. For three Eulerian coordinates, there will be a corresponding four Lagrangian coordinates. Hence there will be some redundancy, in that multiple Lagrangian coordinates will correspond to the same set of Eulerian coordinates. These will form a set of equivalence classes, related by a group of homeomorphisms called relabelling functions.

Throughout the second section we will introduce relevant established results that we make use of.

In Section 3, we construct a semi-metric in Lagrangian coordinates that satisfies Lipschitz continuity with respect to the initial data. This will form a central part of the construction of our metric.

We will see that the semi-metric we construct in Section 3 is far from optimal, since the distance between two elements of the same equivalence class, i.e. two elements representing the same Eulerian coordinates, can be positive. In Section 4, we overcome this issue and detail how we construct the metric in Lagrangian coordinates. Additionally, we establish the Lipschitz continuity with respect to the initial data in the Lagrangian setting.

In the final section, Section 5, we return to Eulerian coordinates, using our metric in Lagrangian coordinates to define a Lipschitz metric in time. In this section we have to take equivalence classes with respect to the dummy variable $\nu$ into account. 

\section{Lagrangian and Eulerian coordinates}

Before we can begin our construction of the metric, we must set up our Eulerian and Lagrangian coordinate spaces. In addition, we must examine the Lagrangian ODE problem, what it means to be a solution in Eulerian coordinates, and introduce relevant results from past literature. This follows the construction outlined in~\cite{MR2653980} and \cite{MR3860266}.

We begin by introducing an important set. Let $E$ be the Banach space of $L^\infty(\R)$ functions with $L^2(\R)$ weak derivatives, with an associated norm $\|\cdot\|_E$,
\[
	E \coloneqq \{f \in L^\infty(\R) \mid f' \in L^2(\R) \}, \quad\ \|f\|_{E} = \|f\|_\infty + \|f'\|_2.
\]

Furthermore, define $H_i \coloneqq H^1(\R) \times \R^i$ for $i = 1, 2$, and $ H_0 = L^2(\R) \times \R $, with the norms
\[
	\|(f, x)\|_{H_i} = \sqrt{\|f\|^2_{H^1} + |x|^2}, \quad\ \|(f, x)\|_{H_0} = \sqrt{\|f\|_2^2 + |x|^2}.
\]

We split the real line into two overlapping sets $(-\infty, 1)$ and $(-1, \infty)$, and pick two functions $\chi^+$ and $\chi^-$ in $C^\infty(\R)$ satisfying the following three properties,
\begin{itemize}
	\item $ \chi^- + \chi^+ = 1 $,
	\item $ 0 \leq \chi^+ \leq 1 $,
	\item $ \supp(\chi^-) \subset (-\infty ,1), $ and $ \supp(\chi^+) \subset (-1, \infty) $, 
\end{itemize}
called a partition of unity.

Using these two functions, we define the following two linear, continuous, and injective mappings,
\begin{align}\label{map:R}
	R_1: H_1 \to E&  \quad\ (f, a) \stackrel{\mathclap{R}}{\mapsto} \hat f=f + a \cdot \chi^+,\\
	R_2: H_2 \to E&  \quad\ (f, a, b) \mapsto \hat f=f + a \cdot \chi^+ + b \cdot \chi^-.
\end{align}
They define the following two Banach spaces, which are subsets of $E$,
\[
	E_1 \coloneqq R_1(H_1), \quad\ \|\hat f\|_{E_1} = \|R_1^{-1}(\hat f)\|_{H_1},
\]
\[
	E_2 \coloneqq R_2(H_2), \quad\ \|\hat f\|_{E_2} = \|R_2^{-1}(\hat f)\|_{H_2}.
\]
Note, from \eqref{map:R}, operation $R$ is well defined for elements of $H_0$. We define the set $E_0$, and the corresponding norm $\|\cdot\|_{E_0}$, by
\[
	E_0 \coloneqq R(H_0), \quad\ \|f\|_{E_0} = \|R^{-1}(f)\|_{H_0}.
\] 
Finally, our $\alpha$ must lie in the following set,
\begin{equation}\label{set:alpha}
	\Lambda \coloneqq W^{1, \infty}(\R; [0, 1)) \cup \{1\}.
\end{equation}
Avoiding functions which attain values on $[0, 1]$, with $1$ inclusive, is necessary to ensure that the mappings between Eulerian and Lagrangian coordinates are invertible with respect to equivalence classes. See Example \ref{exmp:alphFn1}.

With this setup done, we can define the space of Eulerian coordinates.
\begin{defn}[Set of Eulerian coordinates - $\mD$]\label{def:D}
	Let $\alpha \in \Lambda$. The set $\mD^\alpha$ contains all $Y$, with $ Y = (u,\mu,\nu) $, satisfying the following
	\begin{itemize}
		\item $ u\in E_2 $,   
		\item $ \mu \leq \nu \in \mathcal{M}^+(\R) $,
		\item $\mu_{\mathrm{ac}}\leq  \nu_{\mathrm{ac}} \in \mathcal{M}^+(\R)$,
		\item $ \mu_{\mathrm{ac}} = u_x^2\, dx$, 
		\item $ \mu\left((-\infty, x)\right) \in E_0 $,
		\item $ \nu\left((-\infty, x)\right) \in E_0 $,
		\item If $\alpha \equiv 1, \nu_{ac} = \mu =  u_x^2\, dx, $
		\item If $\alpha \in W^{1,\infty}(\R; [0, 1))$, then $\frac{d\mu}{d\nu}(x) > 0$, and $\frac{d\mu_{\mathrm{ac}}}{d\nu_{\mathrm{ac}}}(x) = 1$ if $u_x(x)<0$, for any $x \in \R$,
	\end{itemize}
	where $\mathcal{M}^+(\R)$ is the set of all finite, positive Radon measures on $\R$. 
	
	The set $\mD$ is defined as
	\[
		\mD \coloneqq
		\left\{ 
			 Y^\alpha \coloneqq (Y, \alpha) \mid 
			 	\alpha \in \Lambda \text{ and } Y \in \mD^\alpha
		\right\} 
		= \bigcup_{\alpha \in \Lambda} \left(\mD^\alpha \times \{\alpha\}\right).
	\]

	Finally, for $M$, $L \geq0$, the subset $\mD_M^{L}$ is given by
	\begin{equation}\label{eqn:DM}
		\mD_M^{L}
		\coloneqq 
		\{
			Y^{\alpha} \in \mD
			\mid
			\mu(\R) \leq M \text{ and } \|\alpha'\|_\infty \leq L
		\}.
	\end{equation}
\end{defn}

Before defining the Lagrangian coordinates, we introduce a new Banach space $B$,
\[
	B \coloneqq E_2 \times E_2 \times E_1 \times E_1, \quad\ \|(f_1, f_2, f_3, f_4)\|_B = \|f_1\|_{E_2} + \|f_2\|_{E_2} + \|f_3\|_{E_1} + \|f_4\|_{E_1}.
\]
\begin{defn}[Set of Lagrangian coordinates - $\mF$]\label{defn:F}
	Let $\alpha \in \Lambda$. The set $\mF^\alpha$ contains all $ X = (y,U,H,V)$ such that $(y-\text{id},U,H,V)\in B$, satisfying
	\begin{itemize}
		\item $y-id, U, H, V \in W^{1, \infty}(\R)$,
		\item $y_\xi, H_\xi \geq 0$, and there exists a constant $c$ such that $0<c<y_\xi + H_\xi$ a.e.,
		\item $y_\xi V_\xi = U_\xi^2$,
		\item $ 0\leq V_\xi \leq H_\xi $ a.e.,
		\item If $\alpha \equiv 1, y_\xi(\xi) = 0$ implies $V_\xi(\xi) = 0$, $y_\xi(\xi)>0$ implies $V_\xi(\xi) = H_\xi(\xi)$ a.e.,
		\item If $0\leq \alpha < 1$, there exists $\kappa: \R \to (0, 1]$ such that $V_\xi(\xi) = \kappa(y(\xi)) H_\xi(\xi)$ a.e., with $\kappa(y(\xi)) = 1$ for $U_\xi(\xi) < 0$.
	\end{itemize}
	
	The set $\mF$ is defined as
	\[
		\mF \coloneqq
		\left\{ 
			 X^\alpha \coloneqq (X, \alpha) 
			 	\mid \alpha \in \Lambda \text{ and } X \in \mF^\alpha
		\right\}
		= \bigcup_{\alpha \in \Lambda} \left(\mF^\alpha \times\{\alpha\}\right).
	\]

	Finally, for $M$, $L \geq 0$, the subset $\mF_M^{L}$ is given by
	\begin{equation}\label{eqn:FM}
		\mF_M^{L} 
		\coloneqq \{X \in \mF \mid \|V\|_\infty \leq M \text{ and } \|\alpha'\|_\infty \leq L\}.
	\end{equation}
\end{defn}

For $\alpha \in \Lambda$, define the set $\mF_0^\alpha$ and $\mF_0$ as
\[
	\mF_0^\alpha \coloneqq \big\{X\in\mF^\alpha \mid y+H=\id \big\},
\]
and
\[
	\mF_0 \coloneqq 
	\big\{X^\alpha\coloneqq(X, \alpha)\in\mF 
		\mid y+H=\id \big\}
	= \bigcup_{\alpha \in \Lambda} \left(\mF_0^\alpha \times \{ \alpha\}\right).
\]
Similar, we set $\mF_{0,M}^{L} = \mF_0 \cap \mF_M^{L}$.

In the general case, where wave breaking can occur, the $\alpha$-dissipative solutions to the Hunter--Saxton equation in Lagrangian coordinates are defined as follows.

\begin{defn}[$\alpha$-Dissipative Solution]\label{def:asolLag} Let $X_0^\alpha=(X_0,\alpha)\in \mF$. We say that $X^\alpha=(X,\alpha)$ is an $\alpha$-dissipative solution with the given initial data $X_0^\alpha$ if $X(t)\in \mF^\alpha$ for all $t\geq 0$ and satisfies
\begin{subequations}\label{eqn:LagSys}
	\begin{align}\label{eqn:LagSys1}
	 y_t(\xi, t) &= U(\xi, t), \\
	 U_t(\xi, t) &= \frac{1}{2} V(\xi, t) 
	 	- \frac{1}{4} V_\infty(t), \\ \label{eqn:LagSys3}
	 H_t(\xi, t) &= 0,\\ \label{eqn:LagSys4}
	 V(\xi, t) &= \int_{-\infty}^\xi V_\xi(\eta, 0 ) (1 - \alpha(y(\eta, \tau(\eta)))\mathds{1}_{\{r \in \R\mid t\geq \tau(r) > 0\}}(\eta))\, d\eta,
	\end{align}
\end{subequations}
with initial data $X(0)=X_0$, 
where $\displaystyle V_\infty(t) = \lim_{\xi \to +\infty}V(\xi,t)$.
\end{defn}

Observe that $\alpha$ is independent of time in the above definition, but is essential since the ODE system \eqref{eqn:LagSys} depends heavily on the choice of $\alpha$. 
Furthermore, note that the derivative $V_\xi$ is in general a discontinuous function in time for particles $\xi \in \R$ experiencing wave breaking.

Existence and uniqueness for the system \eqref{eqn:LagSys} has been shown in \cite{MR3860266}, with the additional fact that the wave breaking time for a particle $\xi \in \R$ is given by
	\begin{equation}\label{def:tau}
		\tau(\xi) = \begin{cases}
			-2\frac{y_\xi(\xi, 0)}{U_\xi(\xi, 0)}, &\mbox\ U_\xi(\xi, 0)<0,\\
			0, &\mbox\ U_\xi(\xi, 0)=0=y_\xi(\xi, 0),\\
			+\infty, &\mbox\ \text{otherwise.}
		\end{cases}
	\end{equation}
We will now introduce some simple estimates that we will make use of later on.

\begin{lem}\label{lem:ODEest}
Consider two $\alpha$-dissipative solutions $X_A^{\alpha_A}$ and $X_B^{\alpha_B}$ with initial data
$X_{0,A}^{\alpha_A}$  and $X_{0,B}^{\alpha_B}$ in $\mF$. Then for each fixed $\xi \in \R$ the following estimates hold
\begin{subequations}
	\begin{align}
		|y_A(\xi, t) - y_B(\xi, t)| &\leq |y_A(\xi, 0) - y_B(\xi, 0)| + \int_0^t |U_A(\xi, s) - U_B(\xi ,s)|\, ds,\\
		|U_A(\xi, t) - U_B(\xi, t)| &\leq |U_A(\xi, 0) - U_B(\xi, 0)| \label{ineq:ODEestU}\\
		&\quad\ + \frac{1}{4}\int_0^t \|V_{A,\xi}(\cdot, s) - V_{B,\xi}(\cdot, s)\|_1\, ds. \nonumber
	\end{align}
\end{subequations}
\end{lem}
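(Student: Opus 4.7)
The plan is to integrate the Lagrangian ODE system \eqref{eqn:LagSys} from time $0$ to time $t$ for both solutions, subtract, and apply the triangle inequality.

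For the first estimate, I would use \eqref{eqn:LagSys1} to write
\[
    y_A(\xi,t) - y_B(\xi,t) = \big(y_A(\xi,0) - y_B(\xi,0)\big) + \int_0^t \big(U_A(\xi,s) - U_B(\xi,s)\big)\, ds,
\]
and take absolute values, moving the modulus inside the time integral via the triangle inequality.

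For the second estimate, the analogous integration of the equation for $U_t$ gives
\[
    U_A(\xi,t) - U_B(\xi,t) = \big(U_A(\xi,0) - U_B(\xi,0)\big) + \int_0^t \Big[ \tfrac{1}{2}(V_A - V_B)(\xi,s) - \tfrac{1}{4}(V_{A,\infty} - V_{B,\infty})(s) \Big]\, ds.
\]
The main (and only non-routine) observation needed here is to rewrite the integrand in a form that involves only $V_{A,\xi} - V_{B,\xi}$. Since $V(\xi,s) = \int_{-\infty}^\xi V_\xi(\eta,s)\, d\eta$ and $V_\infty(s) = \int_{-\infty}^{+\infty} V_\xi(\eta,s)\, d\eta$, one gets the symmetric representation
\[
    \tfrac{1}{2}V(\xi,s) - \tfrac{1}{4}V_\infty(s) = \tfrac{1}{4}\int_{-\infty}^\xi V_\xi(\eta,s)\, d\eta - \tfrac{1}{4}\int_\xi^{+\infty} V_\xi(\eta,s)\, d\eta.
\]
Subtracting the analogous expression for $B$ and taking absolute values yields
\[
    \Big| \tfrac{1}{2}(V_A - V_B)(\xi,s) - \tfrac{1}{4}(V_{A,\infty} - V_{B,\infty})(s) \Big| \leq \tfrac{1}{4}\|V_{A,\xi}(\cdot,s) - V_{B,\xi}(\cdot,s)\|_1,
\]
after which the claimed bound on $|U_A(\xi,t) - U_B(\xi,t)|$ follows by the triangle inequality applied to the time integral.

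There is essentially no obstacle here; the whole content is the symmetric splitting $\tfrac{1}{2}V - \tfrac{1}{4}V_\infty = \tfrac{1}{4}\int_{-\infty}^{\xi} V_\xi - \tfrac{1}{4}\int_\xi^{+\infty} V_\xi$, which lets the indefinite quantity $V$ be controlled by the $L^1$-norm of $V_\xi$. One should note in passing that the manipulations are justified because $X_A(s), X_B(s) \in \mF$ for each $s$, so $V_\xi(\cdot,s) \in L^1(\R)$ and $V_\infty(s)$ is a well-defined finite limit.
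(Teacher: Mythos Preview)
Your proof is correct and follows essentially the same route as the paper: integrate the ODE system, subtract, and for the $U$-estimate use the symmetric splitting $\tfrac{1}{2}V - \tfrac{1}{4}V_\infty = \tfrac{1}{4}\int_{-\infty}^{\xi} V_\xi - \tfrac{1}{4}\int_\xi^{+\infty} V_\xi$ to bound the integrand by $\tfrac{1}{4}\|V_{A,\xi}-V_{B,\xi}\|_1$. The paper's proof is identical in substance, only slightly terser in presentation.
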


\begin{proof}
The first estimate is immediate from the ODE system~\eqref{eqn:LagSys}. We focus on the second. For a fixed $\xi\in \mathbb{R}$,
\begin{align*}
	U_A(\xi, t) - U_B(\xi, t) 
	&= U_A(\xi, 0) - U_B(\xi, 0)\\
	&\quad\ + \int_0^t \bigg( \frac{1}{2}(V_{A}(\xi, s) - V_{B}(\xi, s)) - \frac{1}{4}(V_{A,\infty}(s) - V_{B,\infty}(s)) \bigg)\, ds\\
	&= U_A(\xi, 0) - U_B(\xi, 0) \\
	&\quad\ + \frac{1}{4} \int_0^t \bigg( \int_{-\infty}^{\xi} (V_{A,\xi}(\eta, s) - V_{B,\xi}(\eta, s))\, d\eta\\
	&\hphantom{\quad\ + \frac{1}{4} \int_0^t \bigg( \int_{-\infty}^{\xi}}
	- \int_{\xi}^{\infty} (V_{A,\xi}(\eta, s) - V_{B,\xi}(\eta, s))\, d\eta 
	\bigg)\, ds.
\end{align*}
Hence
\begin{equation*}
	|U_A(\xi, t) - U_B(\xi, t)|
         \leq  |U_A(\xi, 0) - U_B(\xi, 0)| + \frac{1}{4}\int_0^t \|V_{A,\xi}(\cdot, s) - V_{B,\xi}(\cdot, s)\|_1\, ds,
\end{equation*}
as required.
\end{proof}

As a consequence, we have the following corollary.

\begin{cor}\label{cor:lagEsts}
For two $\alpha$-dissipative solutions $X_A^{\alpha_A}$ and $X_B^{\alpha_B}$ with initial data $X_{0,A}^{\alpha_A}$ and $X_{0,B}^{\alpha_B}$ in $\mF$, we have
\begin{subequations}
\begin{align}
	\|y_A(t)-y_B(t)\|_\infty 
	&\leq \|y_A(0) - y_B(0)\|_\infty + \int_0^t \|U_A(s) - U_B(s)\|_\infty\, ds,\\
	\|U_A(t)-U_B(t)\|_\infty 
	&\leq \|U_A(0) - U_B(0)\|_\infty\\
	&\quad\ + \frac{1}{4} \int_0^t \|V_{A,\xi}(s) - V_{B,\xi}(s)\|_1\, ds, \nonumber\\
	\|y_{A,\xi}(t)-y_{B,\xi}(t)\|_2
	&\leq \|y_{A,\xi}(0) - y_{B,\xi}(0)\|_2\\
		&\quad\ + \int_0^t \|U_{A,\xi}(s) - U_{B,\xi}(s)\|_2\, ds, \nonumber\\
	\|U_{A,\xi}(t)-U_{B,\xi}(t)\|_2 
	&\leq \|U_{A,\xi}(0) - U_{B,\xi}(0)\|_2\\ 
	&\quad\ + \hlf \int_0^t \|V_{A,\xi}(s) - V_{B,\xi}(s)\|_2\, ds. \nonumber
\end{align}
\end{subequations}
\end{cor}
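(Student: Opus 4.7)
The corollary consists of four norm-level estimates that upgrade the pointwise estimates of Lemma~\ref{lem:ODEest} to $L^\infty$ and $L^2$ bounds in $\xi$. My plan is to handle the $L^\infty$ bounds (the first two inequalities) by a direct supremum argument from the lemma, and to handle the $L^2$ bounds (the third and fourth) by first differentiating the ODE system~\eqref{eqn:LagSys} in $\xi$ and then invoking Minkowski's integral inequality.

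For the first inequality, I would start from the pointwise estimate in Lemma~\ref{lem:ODEest} and bound the right-hand side uniformly in $\xi$ by pulling the supremum inside the integral in $s$; taking the supremum over $\xi$ on the left then yields the claim. The second inequality follows in exactly the same way from~\eqref{ineq:ODEestU}, noting that the integrand on the right is already independent of $\xi$, so no further manipulation is required.

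For the third and fourth inequalities I would exploit the fact that, by differentiating~\eqref{eqn:LagSys1} and~\eqref{eqn:LagSys3} formally in $\xi$, one obtains
\begin{equation*}
  y_\xi(\xi,t) = y_\xi(\xi,0) + \int_0^t U_\xi(\xi,s)\, ds,
  \qquad
  U_\xi(\xi,t) = U_\xi(\xi,0) + \frac{1}{2}\int_0^t V_\xi(\xi,s)\, ds,
\end{equation*}
where the cutoff term $\tfrac{1}{4}V_\infty(s)$ from~\eqref{eqn:LagSys} disappears upon differentiation in $\xi$. Subtracting the analogous identities for $X_A$ and $X_B$, I would then take the $L^2_\xi$ norm of both sides and apply Minkowski's integral inequality in the form $\|\int_0^t f(\cdot,s)\,ds\|_2 \leq \int_0^t \|f(\cdot,s)\|_2\,ds$ to pull the norm inside the time integral, giving exactly the two desired bounds. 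The identities above are justified for a.e.\ $\xi$ by the regularity encoded in Definition~\ref{defn:F} (in particular $y_\xi, U_\xi \in L^2$ and the structural relation $y_\xi V_\xi = U_\xi^2$), together with the representation~\eqref{eqn:LagSys4} of $V$.

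The only mild subtlety I would flag is the time regularity of $V_\xi$: for individual particles $\xi$ experiencing wave breaking, $V_\xi(\xi,\cdot)$ has a jump at $\tau(\xi)$, so the differentiation in $\xi$ must be understood after integrating~\eqref{eqn:LagSys4} and differentiating distributionally. However, since the resulting integrand $V_{A,\xi}(\cdot,s) - V_{B,\xi}(\cdot,s)$ is measurable in $(\xi,s)$ and non-negative integrable pieces are controlled by $\|V_{A,\xi}(s)\|_1 + \|V_{B,\xi}(s)\|_1 \leq \|V_A\|_\infty + \|V_B\|_\infty$ (and similarly in $L^2$ via $V_\xi \leq H_\xi$), Fubini justifies the exchange of $\xi$- and $s$-integrations, so Minkowski's inequality applies without further complication. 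Beyond this measurability/integrability check, no further step is delicate.
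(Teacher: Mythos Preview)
Your proposal is correct and follows exactly the natural route implicit in the paper, which states the corollary without proof as an immediate consequence of Lemma~\ref{lem:ODEest} and the ODE system~\eqref{eqn:LagSys}. One minor slip: you cite~\eqref{eqn:LagSys3} (the equation $H_t=0$) when you mean the second equation $U_t = \tfrac{1}{2}V - \tfrac{1}{4}V_\infty$; your written formulas are nonetheless the right ones.
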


\subsection{Mappings between Eulerian and Lagrangian coordinates}
The goal now is to introduce a way of mapping from Eulerian to Lagrangian coordinates and back. These mappings were developed from similar ones for the more complicated Camassa--Holm equation \cite{MR2372478}, and will be central in using a metric in Lagrangian coordinates to define a metric in Eulerian coordinates.
	
\begin{defn}[Mapping $\Lag:\mD\to\mF_0$]\label{map:EultoLag}
	The mapping $\Lag:\mD \to \mF_0$, from Eulerian to Lagrangian coordinates, is defined by
	\begin{equation*}
	\Lag(Y^\alpha)=\Lag((Y,\alpha))=(X, \alpha)=X^\alpha
	\end{equation*}
	with $X=(y,U,H,V)$ given by
	\begin{subequations}\label{map:L}
		\begin{align}
			y (\xi) &= \sup\{ x\in\R \mid x + \nu\big((-\infty ,x)\big) < \xi \},\\
			U(\xi) &= u(y(\xi)),\\
			H(\xi) &= \xi - y(\xi),\\
			V(\xi) &= \int_{-\infty}^\xi H_\xi(\eta) \frac{d\mu}{d\nu}\circ y(\eta)\, d\eta.
		\end{align}
	\end{subequations}
\end{defn}

\begin{defn}[Mapping $\Eul:\mF \to \mD$]\label{map:LagtoEul}
	The mapping $\Eul:\mF\to\mD$, from Lagrangian to Eulerian coordinates, is defined by
	\begin{equation*}
	\Eul(X^\alpha)=\Eul((X,\alpha))=(Y,\alpha)=Y^\alpha
	\end{equation*}
	with $Y=(u,\mu,\nu)$ given by
	\begin{subequations}
		\begin{align}
		u(x) &= U(\xi), \quad\ \text{ for all } \xi\in\R \text{ such that } x=y(\xi),\\
		\mu &= y_{\#}(V_\xi\, d\xi),\\
		\nu &= y_{\#}(H_\xi\, d\xi).
		\end{align}
	\end{subequations}
	Here, we have used the push forward measure for a measurable function
	$f$ and a $\mu$-measurable set $f^{-1}(A)$, i.e.,
	\[
	f_{\#}(\mu)(A) \coloneqq \mu(f^{-1}(A)).
	\]
\end{defn}

The mapping $\hat{L}$ maps four Eulerian coordinates $((u,\mu,\nu),\alpha)$ to five Lagrangian coordinates $((y,U,H,V),\alpha)$.
Hence there is some redundancy here. 
That is to say, a set of Lagrangian coordinates can represent the same Eulerian coordinates. 
This set is an equivalence class, whose elements are related by what is referred to as a ``relabelling''.
\begin{defn}[Relabelling]\label{defn:equivRel}
Let $\mG$ be the group of homeomorphisms
$f:\R\to\R$ satisfying
\begin{equation}\label{proprt:eqf}
	f-\id \in E_2\cap W^{1,\infty}(\R)\quad \text{and} \quad\ f^{-1}-\id \in W^{1,\infty}(\R).
\end{equation}
We define the group action $\circ:\mF\times \mG \to \mF$, called the relabelling of $X^\alpha \in \mF$ by $f$, as
\[
	(X^\alpha,f) \mapsto X^\alpha\circ f=((y\circ f, U\circ f, H\circ f, V\circ f), \alpha).
\]
Hence, one defines the equivalence relation $\sim$ on $\mF$ by
\[
	X^{\alpha_A}_A \sim X^{\alpha_B}_B \text{ if there exists }f\in \mG \text{ such that } X^{\alpha_A}_A = X^{\alpha_B}_B\circ f.
\]
Finally, define the mapping $\Pi:\mF\to \mF_0$, which gives one representative in $\mF_0$ for each equivalence class,
\[
	\Pi(X^\alpha) = X^\alpha\circ (y + H)^{-1}.
\]
\end{defn}	

Under these equivalence classes, the mappings $ \Lag $ and $ \Eul $ are inverses of one another~\cite{MR3860266, MR3573580}.
\begin{lem}\label{lem:Mrel}
Let $ Y^\alpha \in \mD$, and $ \Lag(Y^\alpha) = X^\alpha $. Then, for any $f \in \mG$,
\[
	\Eul(X^\alpha) = Y^\alpha = \Eul(X^\alpha \circ f).
\]
\end{lem}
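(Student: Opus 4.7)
The plan is to verify two separate statements: (i) the identity $\hat M\circ \hat L=\id$ on $\mathcal{D}$, and (ii) invariance of $\hat M$ under right-composition by $f\in\mathcal{G}$. Both have been essentially established in \cite{MR3860266,MR3573580}, so I would structure the argument as a careful verification that reduces each claim to a change-of-variables computation, emphasizing the points where atoms of $\nu$ and flat pieces of $y$ require care.

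For (i), write $\hat L(Y^\alpha)=X^\alpha=(y,U,H,V,\alpha)$ and $\hat M(X^\alpha)=(\tilde u,\tilde\mu,\tilde\nu,\alpha)$, and show the three Eulerian components coincide with $(u,\mu,\nu)$. I start with $\tilde\nu=\nu$: set $F(x)=x+\nu((-\infty,x))$, so $y$ is the generalised right-inverse of $F$, and $H(\xi)=\xi-y(\xi)$ satisfies $H_\xi=1-y_\xi$ a.e.\ with $H_\xi=1$ on the flat pieces of $y$ that correspond to atoms of $\nu$. Computing
\[
\tilde\nu((-\infty,x))=\int_{\{\eta\,:\,y(\eta)<x\}} H_\xi(\eta)\,d\eta,
\]
the contribution of each flat piece $[a,b]$ with $y\equiv x_0$ equals $b-a$, which matches the jump $\nu(\{x_0\})$ by definition of $F$, while the absolutely continuous part reproduces the a.c.\ part of $\nu$ by ordinary change of variables. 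Next, for $\tilde\mu=\mu$, I use $V_\xi=H_\xi\cdot(d\mu/d\nu)\circ y$ and the identity just established, obtaining
\[
\tilde\mu(A)=\int_{y^{-1}(A)}V_\xi\,d\xi=\int_{y^{-1}(A)}H_\xi\cdot\tfrac{d\mu}{d\nu}(y)\,d\xi=\int_A\tfrac{d\mu}{d\nu}\,d\tilde\nu=\mu(A).
\]
Finally $\tilde u=u$ follows from $U=u\circ y$ and the well-definedness of $\tilde u(x)=U(\xi)$ whenever $x=y(\xi)$; on flat pieces of $y$, well-definedness is forced by $y_\xi V_\xi=U_\xi^2$, which together with $y_\xi=0$ and $V_\xi\le H_\xi$ gives $U_\xi=0$ there, so $U$ is constant on such intervals and consistent with $u\in E_2$.

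For (ii), let $\tilde X=X\circ f$, so $\tilde y=y\circ f$, $\tilde H=H\circ f$, $\tilde V=V\circ f$. Since $f\in\mathcal{G}$ is a bi-Lipschitz homeomorphism with $f_\xi>0$ a.e., the substitution $\eta=f(\xi)$ gives
\[
\tilde y_{\#}(\tilde H_\xi\,d\xi)(A)=\int_{f^{-1}(y^{-1}(A))} H_\xi(f(\xi))\,f_\xi(\xi)\,d\xi=\int_{y^{-1}(A)} H_\xi(\eta)\,d\eta=\nu(A),
\]
and the identical computation with $V_\xi$ yields $\tilde y_{\#}(\tilde V_\xi\,d\xi)=\mu$. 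For the velocity, if $x=\tilde y(\xi)=y(f(\xi))$, then $\tilde u(x)=(U\circ f)(\xi)=U(f(\xi))=u(y(f(\xi)))=u(x)$. Since the $\alpha$ component is unchanged by definition of the group action, this completes the invariance.

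The main obstacle, and the part that warrants the most care, is Step (i) for $\tilde\nu$: one must keep track of the three possible behaviours of $F$ at a given $x$ — strict increase at an a.c.\ point of $\nu$, an atom (flat piece of $y$), or a singular continuous contribution — and check that the integral of $H_\xi$ against the inverse image reconstructs $\nu$ exactly in all three regimes. Once this is handled, the $\mu$ and $u$ pieces, as well as the relabelling invariance, follow quickly from $f_\xi\,d\xi$-change of variables and the constraint $y_\xi V_\xi=U_\xi^2$ in Definition~\ref{defn:F}.
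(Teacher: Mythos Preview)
The paper does not actually give a proof of this lemma; it simply states the result and cites \cite{MR3860266,MR3573580}, where the argument is carried out in full. Your sketch is correct and is precisely the standard argument found in those references: the decomposition into (i) $\hat M\circ\hat L=\id$ and (ii) relabelling-invariance of $\hat M$, the push-forward computation for $\tilde\nu$ via the generalised inverse of $F(x)=x+\nu((-\infty,x))$, the recovery of $\tilde\mu$ from $V_\xi=H_\xi\,(d\mu/d\nu)\circ y$, and the change-of-variables $\eta=f(\xi)$ for the invariance step are all exactly what appears there. Your identification of the delicate point --- reconstructing $\nu$ from $\int_{y^{-1}(A)}H_\xi\,d\xi$ when $\nu$ has atoms or a singular continuous part --- is also the place where those references spend the most effort, so you have located the genuine content correctly.
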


Further, the relabelling is carried forward in time by the solution,
see~\cite[Proposition 3.7]{MR3860266}.
\begin{lem}\label{lem:Stof} 
Denote by $S_t:\mF \to \mF, X_0^\alpha \mapsto S_t(X_0^\alpha)$ for $ t \in [0, +\infty) $ the solution operator defined in Definition~\ref{def:asolLag} through the ODE system \eqref{eqn:LagSys}.
Then, for any initial data $ X_0^\alpha \in \mF$, and any relabelling function $f \in \mG$,
\[
	S_t(X_0^\alpha \circ f) = S_t(X_0^\alpha) \circ f.
\]
\end{lem}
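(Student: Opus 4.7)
The plan is to verify directly that the relabelled curve $\tilde X(t) := S_t(X_0^\alpha) \circ f$ satisfies the ODE system~\eqref{eqn:LagSys} with initial data $X_0^\alpha \circ f$, and then invoke the existence and uniqueness result of~\cite{MR3860266} to conclude $\tilde X(t) = S_t(X_0^\alpha \circ f)$. Write $X(t) = (y,U,H,V)(t) = S_t(X_0^\alpha)$ and $\tilde X(t) = (\tilde y, \tilde U, \tilde H, \tilde V)(t)$, so that $\tilde y(\xi,t) = y(f(\xi),t)$ and similarly for $U,H,V$. Clearly $\tilde X(0) = X_0^\alpha \circ f$, and since $f\in\mG$ preserves the structural conditions in Definition~\ref{defn:F}, $\tilde X(t) \in \mF^\alpha$ for each $t\geq 0$.

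The first three equations of~\eqref{eqn:LagSys} are immediate from the chain rule in $t$: for instance $\tilde y_t(\xi,t) = y_t(f(\xi),t) = U(f(\xi),t) = \tilde U(\xi,t)$, and similarly $\tilde U_t = \frac{1}{2}\tilde V - \frac{1}{4} V_\infty(t)$ and $\tilde H_t = 0$. For the $\tilde U$ equation one needs $\tilde V_\infty(t) = V_\infty(t)$; this holds because $f$ is a homeomorphism of $\R$ with $f-\id\in W^{1,\infty}(\R)$, so $f(\xi)\to+\infty$ as $\xi\to+\infty$, giving $\lim_{\xi\to+\infty}\tilde V(\xi,t) = \lim_{\xi\to+\infty}V(f(\xi),t) = V_\infty(t)$.

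The nontrivial step is verifying~\eqref{eqn:LagSys4} for $\tilde V$. First, by~\eqref{def:tau} and the chain rule,
\[
\tilde \tau(\xi) \;=\; -2\frac{\tilde y_\xi(\xi,0)}{\tilde U_\xi(\xi,0)}
\;=\; -2\frac{y_\xi(f(\xi),0)\, f'(\xi)}{U_\xi(f(\xi),0)\, f'(\xi)}
\;=\; \tau(f(\xi))
\]
whenever the first case of~\eqref{def:tau} applies, and one checks the other two cases analogously using $f'>0$ a.e.\ (which follows from $f, f^{-1}\in \id + W^{1,\infty}$). Moreover $\tilde V_\xi(\eta,0) = V_\xi(f(\eta),0)\, f'(\eta)$. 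Then substituting $\eta' = f(\eta)$ in the defining integral for $V$ gives
\begin{align*}
\tilde V(\xi,t)
&= V(f(\xi),t)\\
&= \int_{-\infty}^{f(\xi)} V_\xi(\eta',0)\bigl(1 - \alpha(y(\eta',\tau(\eta')))\mathds{1}_{\{t\geq \tau(\eta')>0\}}(\eta')\bigr)\,d\eta'\\
&= \int_{-\infty}^{\xi} V_\xi(f(\eta),0) f'(\eta)\bigl(1 - \alpha(\tilde y(\eta,\tilde\tau(\eta)))\mathds{1}_{\{t\geq \tilde\tau(\eta)>0\}}(\eta)\bigr)\,d\eta\\
&= \int_{-\infty}^\xi \tilde V_\xi(\eta,0)\bigl(1 - \alpha(\tilde y(\eta,\tilde\tau(\eta)))\mathds{1}_{\{t\geq\tilde\tau(\eta)>0\}}(\eta)\bigr)\,d\eta,
\end{align*}
which is precisely~\eqref{eqn:LagSys4} for $\tilde X$.

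The main obstacle is the careful bookkeeping in this last step: one must combine the change of variables, the identity $\tilde\tau = \tau\circ f$, and the transformation of $V_\xi$ consistently, and in particular use that $f$ is orientation-preserving (so that the limits of integration transform correctly and no sign issues arise). Once~\eqref{eqn:LagSys} is verified for $\tilde X$ with the given initial data, the uniqueness half of the existence and uniqueness result cited after Definition~\ref{def:asolLag} yields $\tilde X(t) = S_t(X_0^\alpha \circ f)$, as desired.
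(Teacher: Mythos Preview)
Your proof is correct. The paper does not actually prove this lemma; it simply cites \cite[Proposition 3.7]{MR3860266}, and your direct verification---checking that $S_t(X_0^\alpha)\circ f$ solves the system \eqref{eqn:LagSys} with the relabelled initial data and then invoking uniqueness---is precisely the argument one expects to find there.
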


At this point, we should explore what a solution to the Hunter--Saxton equation can look like.

\begin{exmp}\label{exmp1}
Consider as initial data
\[
	u_0(x) = \begin{cases}
		1+x , &\mbox\ -1 < x \leq 0,\\
		1-x , &\mbox\ 0 < x \leq 1,\\
		0, &\mbox\ \text{otherwise},
	\end{cases}
	\quad\
	\nu_0 = \mu_0 = u_{0,x}^2\, dx,
\]
and $\alpha\in \Lambda$ such that $\alpha(2)=\hlf$. 

The corresponding $\alpha$-dissipative solution is given by
\begin{equation}\label{exmp1:u}
	u(x, t) = \begin{cases}
		\begin{cases}
			-\hlf t, &\mbox\ x \leq -\frac{1}{4}t^2 - 1,\\
			\frac{2-t+2x }{t + 2}, &\mbox\ -\frac{1}{4}t^2 - 1 < x \leq t,\\
			\frac{-2-t+2x}{t-2}, &\mbox\ t < x \leq \frac{1}{4}t^2 + 1,\\
			\hlf t, &\mbox\ \frac{1}{4}t^2 + 1 < x,
		\end{cases} &\mbox\ 0 \leq t < 2,\\
		\begin{cases}
		- \frac{1}{4}	-\frac{3}{8} t, &\mbox\ x \leq -\frac{3}{16}t^2 - \frac{1}{4}	t - \frac{3}{4},\\
			\frac{2-t+4x}{2(t+2)}, &\mbox\ -\frac{3}{16}t^2 - \frac{1}{4}t - \frac{3}{4} < x \leq \frac{1}{16}t^2 + \frac{3}{4}t + \frac{1}{4},\\
			\frac{-2-t+2x}{t-2}, &\mbox\ \frac{1}{16}t^2 + \frac{3}{4}t + \frac{1}{4} < x \leq \frac{3}{16}t^2 + \frac{1}{4}t + \frac{3}{4},\\
			\frac{1}{4} + \frac{3}{8} t, &\mbox\ \frac{3}{16}t^2 + \frac{1}{4}t + \frac{3}{4} < x,
		\end{cases} &\mbox\ 2<t.
	\end{cases}
\end{equation}
with
\[
	\mu(t) = u_x^2(t)\, dx + \hlf \delta_{2}\mathbbm{1}_{\{t=2\}}(t).
\]
See Figure \ref{fig:exmp1} for plots of $u$ at different times.

\begin{figure}
\begin{center}
	\includegraphics[scale=0.6]{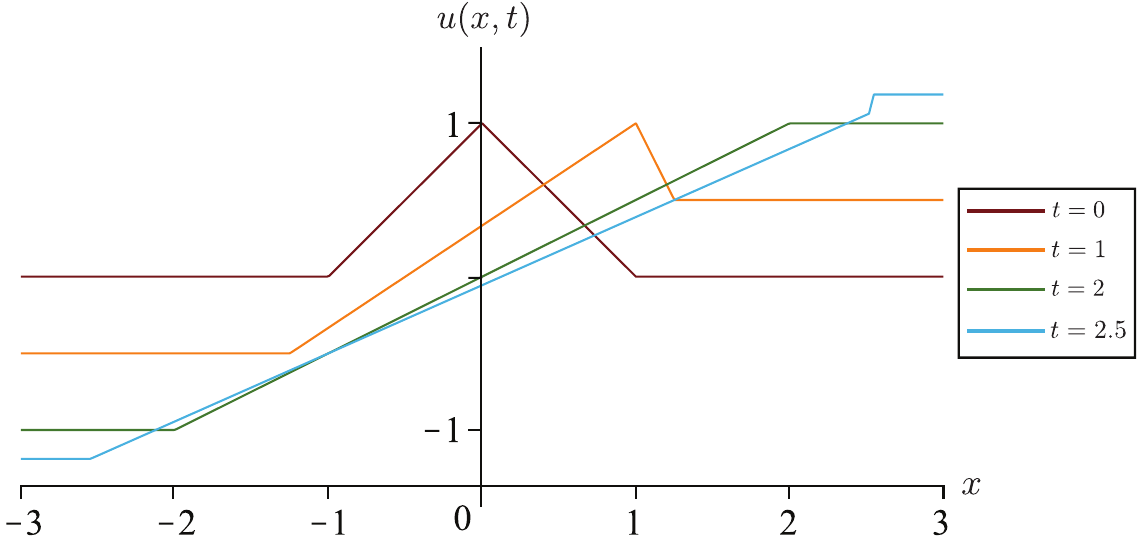}
\end{center}
\caption{Plots of $u$, as given by \eqref{exmp1:u}, at different times.} 
\label{fig:exmp1}
\end{figure}

Note that the third interval shrinks into the single point $x=2$ as $t\to 2$, and the derivative $u_x \to -\infty$ as $t\to 2$. Of course we retain that $u$ is a distributional solution regardless of the value of $u$ at this point. However, $u(\cdot,t)\in E_2$ and therefore $u(2,2) = 1$. 

Furthermore, note that all $\alpha\in \Lambda$, which satisfy $\alpha(2)=\frac12$, yield the same $\alpha$-dissipative solution. This is due to wave breaking occurring once at $(t,x)=(2,2)$ for all of these $\alpha$-dissipative solutions. As a consequence, it is vital to consider $Y^\alpha$ instead of $Y$, when constructing our metric.
 
\end{exmp}

With our notation in place, we introduce the definition of an $\alpha$-dissipative solution for \eqref{eqn:HS}.

\begin{defn}[$\alpha$-Dissipative Solution]\label{def:euleralpha}
Let $ Y_0^\alpha=(Y_0, \alpha) = ((u_0, \mu_0, \nu_0), \alpha) \in \mD $. We say $Y^\alpha=(Y,\alpha) =( (u, \mu, \nu), \alpha)$ is a weak solution with the given initial data $Y_0^\alpha$ if the following conditions are satisfied,
\begin{subequations}
\begin{alignat}{2}
	u &\in C^{0,\hlf}(\R \times [0, T]), \quad\ &&\text{ for any } T \geq 0,\\
	\nu &\in C_{weak*}([0, +\infty); \mathcal{M}^+(\R)),\\
	Y(t) &\in \mD^\alpha, &&\text{ for any } t \in [0, +\infty) ,\\
	Y(0) &= Y_0,\\
	\nu(t)(\R) &= \nu_0(\R), && \text{ for any } t \in [0, +\infty).
\end{alignat}
\end{subequations}
Further, $u$ must satisfy \eqref{eqn:HS} in the distributional sense, that is, for any test function $ \varphi \in \Cc(\R \times [0, +\infty)) $ with $ \varphi(x, 0) = \varphi_0(x) $,
\begin{equation}
	\int_0^{+\infty} \int_\R 
	\bigg[ 
		u \varphi_t 
		+ \frac{1}{2} u^2 \varphi_x 
		+ \frac{1}{4}
		\bigg( 
			\int_{-\infty}^x\, d\mu - \int_x^{+\infty}\, d\mu 
		\bigg) 
		\varphi
	\bigg]\, dx\, dt
	= -\int_\R u_0 \varphi_0\, dx,
\end{equation}
and $ \mu $ must satisfy \begin{equation}\label{ineq:nonConsMu}
	\int_0^{+\infty} \int_\R [\phi_t + u \phi_x]\, d\mu(t)\, dt \geq - \int_\R \phi_0\, d\mu_0,
\end{equation}
for every non-negative test function 
\begin{equation*}
	\phi \in \Cc(\R \times [0, +\infty); [0, +\infty)),
	\quad \text{with} \quad
	\phi(x, 0) = \phi_0(x).
\end{equation*}

Finally, we say that $Y^\alpha$ is an $\alpha$-dissipative solution if $Y^\alpha$ is a weak solution and if for each $ t \in [0, +\infty) $,
\begin{subequations}
\begin{alignat}{2}
	d \mu(t) &= d\mu^-_{ac}(t) + &&(1-\alpha(x))d\mu_s^-(t),\\
	\mu(s) &\overset{\ast}{\rightharpoonup} \mu(t), &&\text{as } s \downarrow t,\\
	\mu(s) &\overset{\ast}{\rightharpoonup} \mu^-(t), &&\text{as } s \uparrow t.
\end{alignat}
\end{subequations}
\end{defn}

\begin{note}
	If $Y^\alpha(t)$ is a conservative solution, then \eqref{ineq:nonConsMu} will be an equality.
\end{note}

Bringing everything together, define $ T_t:\mD\to \mD$ for $t\in [0,+\infty)$  as
\[
	T_t Y_0^\alpha =( \Eul \circ S_t \circ \Lag)Y_0^\alpha.
\]
Then $T_t$ associates to
 each initial data $Y_0^\alpha=(Y_0, \alpha) \in\mD $ an $\alpha$-dissipative solution in the sense of Definition~\ref{def:euleralpha}. The proof can be found in \cite[Theorem 3.14]{MR3860266}. Henceforth when referring to $\alpha$-dissipative solutions in Eulerian coordinates, we refer to the solutions given by $T_t$.

Finally, it is important to observe that $u$ and $\mu$ are independent of $\nu$ and therefore it is possible to introduce equivalence classes in Eulerian coordinates.

\begin{lem}
Let $Y_A^{\alpha_A}$ and $Y_B^{\alpha_B}$ be two $\alpha$-dissipative solutions with initial data $Y_{A,0}^{\alpha_A}$ and $Y_{0,B}^{\alpha_B}$ in $\mD$.
If 
\begin{equation}\label{Uavh:nu}
u_{0,A}=u_{0,B}, \quad \mu_{0,A}=\mu_{0,B} \quad \text{ and } \quad \alpha_A=\alpha_B,
\end{equation}
then 
\begin{equation*}
u_A(\cdot,t)=u_B(\cdot,t) \quad \text{ and } \quad  \mu_A(t)=\mu_B(t) \quad \text{ for all }t\geq 0.
\end{equation*}
\end{lem}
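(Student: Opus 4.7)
The plan is to leverage two structural facts about our construction: first, the Lagrangian ODE system \eqref{eqn:LagSys} for the triple $(y, U, V)$ is autonomous in $H$, since $H_t \equiv 0$ and the formulas for $y_t$, $U_t$, and the integral representation of $V$ reference only the initial values $y(\cdot, 0)$, $U(\cdot, 0)$, $V(\cdot, 0)$, and $\alpha$ (through $\tau$, which is in turn determined by $y_\xi(\cdot, 0)$ and $U_\xi(\cdot, 0)$); second, the mapping $\Eul$ extracts $u$ and $\mu$ from $(y, U, V)$ alone, with this extraction invariant under the action of $\mathcal{G}$ by Lemma~\ref{lem:Mrel}.

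First I would set $X_{0,A}^\alpha = \Lag(Y_{0,A}^\alpha)$ and $X_{0,B}^\alpha = \Lag(Y_{0,B}^\alpha)$, both lying in $\mF_0^\alpha$. The main technical step is to construct a relabelling $f \in \mathcal{G}$ such that $(y_{0,A}, U_{0,A}, V_{0,A}) = (y_{0,B} \circ f, U_{0,B} \circ f, V_{0,B} \circ f)$. Since $y_0$ is the pseudo-inverse of $x \mapsto x + \nu_0((-\infty, x))$, on the subset of $\xi$ where $y_{0,B}$ is strictly increasing one sets $f = y_{0,B}^{-1} \circ y_{0,A}$, while on intervals where $y_{0,B}$ is constant (the atom intervals of $\nu_{0,B}$) $f$ is defined affinely onto the corresponding atom interval of $y_{0,A}$. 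The conditions $\mu \leq \nu$ and (for $\alpha \in W^{1,\infty}(\R;[0,1))$) $\frac{d\mu}{d\nu} > 0$ from Definition~\ref{def:D}, combined with $\mu_{0,A} = \mu_{0,B}$, align the atoms of $\nu_{0,A}$ and $\nu_{0,B}$ with those of $\mu_0$ so that the matching is well-defined; the identities $U_{0,A} = U_{0,B} \circ f$ and $V_{0,A} = V_{0,B} \circ f$ then follow from $U_0 = u_0 \circ y_0$ and the affine behaviour of $V_0$ on atom intervals (where $V_0$ increases by exactly $\mu_0(\{x_0\})$ on both sides).

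With $f$ in hand, set $\tilde X_{0,B} = X_{0,B}^\alpha \circ f$. By Lemma~\ref{lem:Stof}, $S_t \tilde X_{0,B} = (S_t X_{0,B}^\alpha) \circ f$ for all $t \geq 0$. Since $X_{0,A}^\alpha$ and $\tilde X_{0,B}$ share $(y, U, V)$ at $t = 0$, the autonomy of the $(y, U, V)$-dynamics in $H$ propagates this agreement: $S_t X_{0,A}^\alpha$ and $S_t \tilde X_{0,B}$ agree in their $y$, $U$, and $V$ components for every $t \geq 0$. Applying $\Eul$ and using Lemma~\ref{lem:Mrel}, $\Eul(S_t \tilde X_{0,B}) = \Eul(S_t X_{0,B}^\alpha) = Y_B(t)$, while the $u$- and $\mu$-components of $\Eul$ are determined by $(y, U, V)$ alone and hence equal those of $\Eul(S_t X_{0,A}^\alpha) = Y_A(t)$. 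This yields $u_A(\cdot, t) = u_B(\cdot, t)$ and $\mu_A(t) = \mu_B(t)$ for every $t \geq 0$.

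The hardest step is the construction of $f$ and the verification that $f \in \mathcal{G}$, in particular the regularity $f - \id, f^{-1} - \id \in W^{1,\infty}$ and $f_\xi - 1 \in L^2$ at the interfaces between atom and non-atom intervals. A further subtlety arises when $\alpha \equiv 1$: Definition~\ref{def:D} then permits $\nu$ to carry singular mass at points where $\mu$ vanishes, producing ``phantom'' Lagrangian intervals on which $V_\xi \equiv 0$. If these phantom structures differ between $A$ and $B$ then no bijective $f \in \mathcal{G}$ achieves the matching, and one supplements the argument by observing that phantom intervals persist under $S_t$ (since $y_\xi$, $U_\xi$, and $V_\xi$ all remain zero on them by direct inspection of \eqref{eqn:LagSys}) and contribute nothing to either $u$ or $\mu$ under $\Eul$, so that collapsing them from both Lagrangian representations reduces the problem to the previous regime.
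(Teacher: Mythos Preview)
Your overall architecture matches the paper's: match the triples $(y,U,V)$ at time zero via a change of labels, propagate using the fact that the $(y,U,V)$ subsystem does not see $H$, then read off $u$ and $\mu$. The gap is in the sentence ``construct a relabelling $f\in\mathcal G$''. In general no such $f$ exists. The constraints in Definition~\ref{def:D} force $\nu_{0,A}$ and $\nu_{0,B}$ to be mutually absolutely continuous with $\mu_0$ when $\alpha<1$, so their atoms sit at the same points, but nothing bounds the ratio $\frac{d\nu_{0,B}}{d\nu_{0,A}}$ from above or below. Concretely, take $u_{0,x}\equiv 1$ on $[0,1]$ (so $\mu_0=\mathds 1_{[0,1]}\,dx$), $\nu_{0,A}=\mu_0$, and $\nu_{0,B}$ with density $\sqrt{n}$ on $[\tfrac1{n+1},\tfrac1n]$; then $\nu_{0,B}$ is finite, $\frac{d\mu_0}{d\nu_{0,B}}>0$ everywhere, yet your $f=y_{0,B}^{-1}\circ y_{0,A}$ has $f_\xi$ unbounded. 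So $f\notin W^{1,\infty}$, and the appeals to Lemma~\ref{lem:Stof} and Lemma~\ref{lem:Mrel} are not justified. The problem is not confined to the $\alpha\equiv 1$ ``phantom interval'' scenario you flag; it already appears for smooth $\alpha<1$.

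The paper sidesteps this by first reducing (without loss of generality) to $\nu_{0,A}=\mu_{0,A}$, so that $y_{0,A}+V_{0,A}=\id$, and then setting $g=y_{0,B}+V_{0,B}$. This $g$ is increasing and Lipschitz with constant at most one, but it is \emph{not} claimed to lie in $\mathcal G$; it can have zero derivative on positive-measure sets. The identity $y_{0,A}\circ g=y_{0,B}$ is then proved directly from the measure-theoretic characterisation of $y_0$ and $V_0$ (a short contradiction argument), after which $U_{0,A}\circ g=U_{0,B}$ and $V_{0,A}\circ g=V_{0,B}$ follow. The forward propagation $(y_A,U_A,V_A)(g(\xi),t)=(y_B,U_B,V_B)(\xi,t)$ is obtained by rerunning the argument behind \cite[Proposition~3.7]{MR3860266} for this possibly degenerate $g$, not by invoking Lemma~\ref{lem:Stof}; and the return to Eulerian coordinates is done by hand via the push-forward definition rather than via Lemma~\ref{lem:Mrel}. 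If you adopt the WLOG reduction and drop the requirement that your change of labels be a genuine relabelling in $\mathcal G$, your outline becomes the paper's proof.
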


\begin{proof}

Without loss of generality assume that $\mu_{0,A}=\nu_{0,A}$. 

Introduce $X_{0,i}^{\alpha_i}=( (y_{0,i}, U_{0,i}, V_{0,i}, H_{0,i}), \alpha_i)=\hat{L}(Y_{0,i}^{\alpha_i})$ for $i=A$, $B$. We claim there exists an increasing and Lipschitz continuous function $g$ such that 
\begin{equation}\label{claim}
(y_{0,A}\circ g, U_{0,A}\circ g, V_{0,A}\circ g)=(y_{0,B}, U_{0,B}, V_{0,B}).
\end{equation}
By assumption $V_{0,A}(\xi)=H_{0,A}(\xi)$ for all $\xi\in \mathbb{R}$ and hence 
\begin{equation*}
y_{0,A}(\xi)+V_{0,A}(\xi)=\xi \quad  \text{ for all }\xi\in \mathbb{R}. 
\end{equation*}
For $V_{0,B}(\xi)$, on the other hand, we have that there exists a function $\kappa: \mathbb{R}\to [0,1]$ such that 
\begin{equation*}
V_{0,B,\xi}(\xi)= \kappa(y_{0,B}(\xi))H_{0,B,\xi}(\xi) \quad \text{ for all }\xi \in \mathbb{R},
\end{equation*}
which implies that 
\begin{align*}
y_{0,B}(\xi)+V_{0,B}(\xi)&= y_{0,B}(\xi)+ H_{0,B}(\xi)+V_{0,B}(\xi)-H_{0,B}(\xi)\\
& = \xi - \int_{-\infty}^\xi (1-\kappa(y_{0,B}(\eta)))H_{0,B,\xi}(\eta) d\eta,
\end{align*}
where the function on the right hand side is increasing and Lipschitz continuous with Lipschitz constant at most one. 
Introduce 
\begin{equation*}
g(\xi)=\xi - \int_{-\infty}^\xi (1-\kappa(y_{0,B}(\eta)))H_{0,B,\xi}(\eta) d\eta,
\end{equation*}
then 
\begin{equation}\label{get:contr}
y_{0,B}(\xi)+V_{0,B}(\xi)=g(\xi)=y_{0,A}(g(\xi))+V_{0,A}(g(\xi)) \quad \text{ for all } \xi\in \mathbb{R}.
\end{equation}
Next, we establish that $y_{0,A}(g(\xi))= y_{0,B}(\xi)$ for all $\xi \in \mathbb{R}$. Assume the opposite, i.e., there exists $\bar \xi \in \mathbb{R}$ such that $y_{0,A}(g(\bar \xi))\not = y_{0, B}(\bar \xi)$ and without loss of generality we assume that 
\begin{equation}\label{cont:ass}
y_{0,A}(g(\bar \xi))<y_{0, B}(\bar \xi). 
\end{equation}
Since \eqref{map:L} implies for $i=A$, $B$,
\begin{equation*}
\mu_{0,i}((-\infty, y_{0,i}(\xi)))\leq V_{0,i}(\xi)\leq \mu_{0,i}((-\infty, y_{0,i}(\xi)]) \quad \text{ for all }\xi\in \mathbb{R},
\end{equation*}
we have, recalling \eqref{Uavh:nu} and using \eqref{get:contr},
\begin{align*}
\mu_{0,A}((-\infty, y_{0,B}(\bar \xi)))& =\mu_{0,B}((-\infty,y_{0,B}(\bar \xi))) \leq V_{0,B}(\bar \xi)\\
& <V_{0,A}(g(\bar \xi))\leq \mu_{0,A}((-\infty, y_{0,A}(g(\bar \xi))])
\end{align*}
Since this is only possible if $y_{0,B}(\bar \xi))\leq y_{0,A}(g(\bar \xi))$, we end up with a contradiction to \eqref{cont:ass}. Thus $y_{0,A}\circ g= y_{0,B}$ and, by Definition~\ref{map:EultoLag}, $V_{0,A}\circ g=V_{0,B}$ and 
\begin{equation}
	U_{0,A} \circ g = u \circ y_{0,A} \circ g = u \circ y_{0,B} = U_{0,B},
\end{equation}
which finishes the proof of \eqref{claim}.

Next, we show
\begin{equation}\label{claim2}
(y_A,U_A,V_A)(g(\xi),t)=(y_B,U_B,V_B)(\xi,t)\quad \text{ for all }\xi \in \mathbb{R}\text{ and }t\geq 0.
\end{equation}
Therefore, observe that the system of ordinary differential equations given by \eqref{eqn:LagSys1}--\eqref{eqn:LagSys3} is a closed system for $(y,U,V)$ and hence $H$ does not influence the time evolution of $(y,U,V)$. Furthermore, recalling that $\alpha_A=\alpha_B$ and repeating the argument of \cite[Proposition 3.7]{MR3860266}, one finds \eqref{claim2}.

Finally, we can apply the mapping $M$ to go back to Eulerian coordinates as follows. Let $(x,t)\in \mathbb{R}\times \mathbb{R}^+$, then there exists $\xi \in \mathbb{R}$ such that 
\begin{equation*}
y_A(g(\xi),t)=x=y_B(\xi,t)
\end{equation*}
and hence 
\begin{equation*}
u_A(x,t)=U_A(g(\xi),t)=U_B(\xi,t)=u_B(x,t).
\end{equation*}
Furthermore, let $\bar\xi=\sup \{\eta\mid y_B(\eta,t)<x\}$, then $g(\bar \xi)=\sup \{\eta\mid y_A(\eta,t)<x\}$ and therefore
\begin{equation*}
\mu_A((-\infty,x),t)=\int_{-\infty}^{g(\bar\xi)}V_{A,\xi}(\eta,t)d \eta= V_A(g(\bar \xi),t)=V_B(\bar \xi,t)=\mu_B((-\infty,x),t).
\end{equation*}
\end{proof}

We can now define a new set that will contain triplets $Z^\alpha=(Z,\alpha)=((u,\mu),\alpha)$ that form the solution to
\eqref{eqn:HS}.

\begin{defn}[Equivalence classes in $\mD$]\label{def:equiv:euler}
The set $\mD_0$ contains all $Z^\alpha=(Z,\alpha)=((u,\mu), \alpha)\in E_2\times \mathcal{M}^+(\R)\times \Lambda$ satisfying
\begin{itemize}
\item $ \mu_{ac} = u_x^2\, dx $,
	\item $ \mu = u_x^2\, dx$ if $\alpha = 1$,
	\item $\mu((-\infty, x))\in E_0$.
\end{itemize}
Then, for each $Z^\alpha=(Z,\alpha)=((u, \mu),\alpha) \in \mD_{0}$ we define the set
\[
	\mathcal{V}(Z^\alpha) \coloneqq \{ \nu \in \mathcal{M}^+(\R) \mid ((Z, \nu),\alpha) \in \mD\},
\]
i.e. the equivalence class of all $\nu$ related by having the same $ Z^\alpha=((u, \mu),\alpha) $.

Finally, for $M$, $L\geq 0$, define $\mD_{0, M}^{L}$ by
\[
	\mD_{0, M}^{L}
	\coloneqq \left\{
		Z^\alpha\in \mD_0\mid 
	 \mu(\R) \leq M \text{ and } \|\alpha'\|_\infty \leq L\right \}.
\]
\end{defn}

\begin{note}
$\mD$ can be written as
\[
	\mD = \{ ((u, \mu, \nu), \alpha) 
	\mid 
	 ((u,\mu),\alpha) \in \mD_{0} \text{ and } \nu \in \mathcal{V}((u, \mu),\alpha) \}.
\]

\end{note}

\begin{note}
Under the present setting, uniqueness of fully dissipative solutions has been established in~\cite{MR2796054}. For the conservative case, uniqueness was shown in~\cite{MR4400175}.
\end{note}

\section{A metric in Lagrangian coordinates}
Our first goal is to introduce a metric in the space of Lagrangian coordinates that is Lipschitz stable with respect to initial Lagrangian coordinates in the sense of equivalence classes.

We begin our approach by introducing a semi-metric, 
i.e. dropping the triangle inequality requirement,
on the set of Lagrangian coordinates.
The most important condition of this mapping is that it is Lipschitz
continuous with respect to the initial data in $\mathcal{F}$.
We will then, in the next section, use this semi-metric to define a metric on the space
of equivalence classes in Lagrangian coordinates, ensuring that 
Lagrangian coordinates representing the same Eulerian coordinates
have a distance of zero.

We introduce important sets that our construction will take advantage of.

\subsection{Some important sets}
For two $\alpha$-dissipative solutions 
$X_i^{\alpha_i}$,  $X_j^{\alpha_j}$,
with labels $i$ and $j$, define the sets
\begin{subequations}\label{defn:normSets}
\begin{align}
	&\mA_i(t) = \mA(X_i^{\alpha_i}; t) = \left\{ \xi \in \R \mid U_{i, \xi}(\xi, t) \geq 0 \right\},\\
	&\mA_{i,j}(t) = \mA_i(t) \cap \mA_j(t),\\
	&\mB_{i, j}(t) = \mB(X_i^{\alpha_i}, X_j^{\alpha_j}; t) = \left\{ \xi \in \R \mid t < \tau_i(\xi) = \tau_j(\xi) < \infty \right\},\\
	&\Omega_{i,j}(t) = \Omega(X_i^{\alpha_i}, X_j^{\alpha_j}; t) = \mA_{i,j}(t) \cup \mB_{i,j}(t).
\end{align}
\end{subequations}
Should $X_i, X_j$ be just elements of $\mF$ (with no time dependence), take $t=0$ in the definitions, and naturally these will no longer be dependent on time.

We can describe the contents of these sets as follows
\begin{itemize}
	\item $\mA_{i,j}(t)$ contains the particles $\xi$ for which no wave breaking will occur for both solutions at any point in the future. 
	\item $ \mB_{i, j}(t) $ contains the $\xi$ for which wave breaking will occur in both solutions at the same time in the future.
	\item $ \Omega_{i,j}^c(t) $ contains everything else, i.e. particles for which wave breaking occurs at different times in the future, or for which only one of the two will break.
\end{itemize}
Importantly, these three sets form a disjoint union of the entire real line and are independent of the choice of $\alpha$.

Furthermore, elements $\xi$ of the sets $\mB_{i,j}(t)$ and $\Omega_{i,j}(t)$
remain in their respective set until both have broken and $\xi$ enters $\mA_{i,j}(t)$.

A natural question is ``how do these sets change after a relabelling of the Lagrangian coordinates?''. To begin answering this question, we introduce the following notation:

For $X_i^{\alpha_i}$ and $X_j^{\alpha_j}$ in $\mF$, and $f, h \in \mG$ define
\begin{subequations}\label{defn:normSetsLabel}
\begin{align}
	&\mA_i^f(t) = \mA(X_i^{\alpha_i}\circ f; t)\\
	&\mA_{i,j}^{f,h}(t) = \mA_i^f(t) \cap \mA_j^h(t),\\
	&\mB_{i,j}^{f,h}(t) = \mB(X_i^{\alpha_i} \circ f, X_j^{\alpha_j}\circ h; t)\\
	&\Omega_{i,j}^{f,h}(t) = \mA_{i,j}^{f,h}(t) \cup \mB_{i,j}^{f,h}(t).
\end{align}
\end{subequations}
If $f$ and $h$ are the identity functions, then this notation collapses back to that in $\eqref{defn:normSets}$.

Consider two functions $f$ and $h$ in $\mG$, the set of relabelling functions, as given by Definition \ref{defn:equivRel}. Such functions are continuous and strictly monotonically increasing, i.e. $ f_\xi(\xi) > 0 $, almost everywhere, cf. \cite[Lemma 3.2]{MR2372478}.

Let $X^\alpha \in \mF$. Then
\begin{equation}\label{eqn:relA}
\begin{split}
	\mA^f
	&= \{ \xi\in\R \mid (U \circ f)_\xi(\xi) \geq 0 \}\\
	&= \{ \xi\in\R \mid (U_{\xi} \circ f)(\xi)f_\xi(\xi) \geq 0 \} \\
	&= \{ \xi\in\R \mid (U_{\xi} \circ f)(\xi) \geq 0 \}\\
	&= \{ \xi \in \R \mid f(\xi) \in \mA \}
	= f^{-1}(\mA),
\end{split}
\end{equation}
or equivalently, $\mA = f(\mA^f)$.

Inspired by the previous calculation, we look at the other sets. We have, as $f$ is bijective, for $X_A^{\alpha_A}$ and $X_B^{\alpha_B} \in \mF$,
\begin{equation}
\begin{split}
\label{eqn:SetRelAfh}
	f(\mA_{A,B}^{f,h})
	= f(\mA_A^f) \cap f(\mA_B^h)
	= \mA_A \cap \mA_B^{h \circ f^{-1}}
	= \mA_{A,B}^{\id, h\circ f^{-1}}.
\end{split}
\end{equation}

We also have a relation for the breaking times after relabelling.
Once again take $X_A^{\alpha_A}$ and $ X_B^{\alpha_B} \in \mF$, and suppose that $f(\eta) \in \mA_A^c$.
Defining temporarily $X_C ^{\alpha_C}= X_A^{\alpha_A} \circ f$, the wave breaking time after relabelling is given by
\[
	\tau_C(\eta)
	= -2\frac{(y_{A}\circ f)_\eta(\eta)}{(U_A\circ f)_\eta(\eta)}
	= -2 \frac{y_{A,\xi}(f(\eta))f_\eta(\eta)}{U_{A,\xi}(f(\eta))f_\eta(\eta)}
	= \tau_A(f(\eta)) \text{ a.e.},
\]
which gives us
\begin{equation}
\begin{split}\label{eqn:SetRelBfh}
	f(\mB_{A,B}^{f,h})
	&= \{ f(\xi) \mid \xi \in \R \text{ and } 0 < \tau_A(f(\xi)) = \tau_B(h(\xi)) < +\infty \}\\
	&= \{ \xi \in \R \mid 0 < \tau_A(\xi) = \tau_B((h\circ f^{-1})(\xi)) < +\infty \}
	= \mB_{A,B}^{\id, h\circ f^{-1}}.
\end{split}
\end{equation}
This has the immediate consequence
\begin{equation}\label{eqn:SetRelOmfh}
	f(\Omega_{A,B}^{f,h,c}(t)) = \Omega_{A,B}^{\id, h\circ f^{-1}, c}(t).
\end{equation}

\subsection{Construction of a semi-metric for Lagrangian coordinates}
We now begin the first step of the construction of our metric, measuring the distance between two $\alpha$-dissipative solutions, where $\alpha \in \Lambda$.

We cannot simply use a metric based on the norms of the Banach space $E$.
This is a consequence of the discontinuities in time of the derivatives $V_\xi$.
For two solutions $X_A^{\alpha_A}$ and $X_B^{\alpha_B}$, the difference $\|V_{A,\xi}(t) - V_{B,\xi}(t)\|_1$
can increase in time and in particular, it can have a jump of positive height.

To resolve this issue, we introduce a new function $G_{A,B}(\xi,t)$ that will decrease in time, and only drops can occur.

Let $X_A^{\alpha_A}$, $X_B^{\alpha_B}$ be two $\alpha$-dissipative solutions.
The following functions will all contribute to the function $G_{A,B}(\xi,t)$.
\begin{equation}\label{eqn:g}
	g_{A,B}(\xi, t)=g(X_A^{\alpha_A}, X_B^{\alpha_B})(\xi,t) = |V_{A,\xi}(\xi, t)  - V_{B,\xi}(\xi, t) |,
\end{equation}

\begin{equation}\label{eqn:ghat}
\begin{split}
	\hat{g}_{A,B}(\xi, t)
	&= \hat{g}(X_A^{\alpha_A}, X_B^{\alpha_B})(\xi,t)\\
	 &= |V_{A,\xi}(\xi, t)  - V_{B,\xi}(\xi, t) | + \|\alpha_A - \alpha_B\|_\infty (V_{A,\xi} \wedge V_{B,\xi})(\xi, t)\\
	 &\quad\ + \|\alpha_{A,B}'\|_\infty (V_{A,\xi} \wedge V_{B,\xi})(\xi, t) \bigg(
	 |y_A(\xi, t) - y_B(\xi, t)|\\
	 &\hphantom{\quad\quad \ + \|\alpha'\|_\infty (V_{A,\xi} \wedge V_{B,\xi})(\xi, t) \bigg(}
	  +|U_A(\xi, t) - U_B(\xi, t)|
	 \bigg),
\end{split}
\end{equation}

\begin{equation}\label{eqn:gbar}
\begin{split}
	\bar{g}_{A,B}(\xi, t)
	&= \bar{g}(X_A^{\alpha_A}, X_B^{\alpha_B})(\xi,t)\\
	&= |V_{A,\xi}(\xi, t)  - V_{B,\xi}(\xi, t) |\\
	&\quad\ + \big(V_{A,\xi} \wedge V_{B,\xi} \big) (\xi, t)(\alpha_A(\xi)\bbo_{\mA_A^c(t)}(\xi) +  \alpha_B(\xi)\bbo_{\mA_B^c(t)}(\xi))\\
	&\quad\ + \|\alpha_{A,B}'\|_\infty (V_{A,\xi} \wedge V_{B,\xi})(\xi, t) \\
	&\hphantom{ \quad\ + \|\alpha_{A,B}'\|_\infty }
		\times
		\bigg(
			|y_A(\xi, t) - \id(\xi)|\bbo_{\mA_A^c(t)}(\xi)\\  
		&\hphantom{ \quad\ + \|\alpha_{A,B}'\|_\infty \times \bigg(}
			+ |y_B(\xi, t) - \id(\xi)|\bbo_{\mA_B^c(t)}(\xi)\\
		&\hphantom{ \quad\ + \|\alpha_{A,B}'\|_\infty \times \bigg(}
			+|U_A(\xi, t)|(\bbo_{\mA_A^c(t)}(\xi) + \bbo_{\mA_B^c(t)}(\xi))\\
		&\hphantom{ \quad\ + \|\alpha_{A,B}'\|_\infty \times \bigg(}
			+ |U_B(\xi, t)|(\bbo_{\mA_A^c(t)}(\xi) + \bbo_{\mA_B^c(t)}(\xi))
		\bigg),
\end{split}
\end{equation}
where
\[
	\alpha_{A,B}' = \alpha_A' \vee \alpha_B'.
\]
Here we use a shorthand notation for the minimum and the maximum. For $a, b \in \R$,
\[
	a \wedge b = \min\{a, b\}\quad  \text{ and }\quad  a \vee b = \max\{a, b\}.
\]

\begin{prop}\label{prop:G}
	Let $X_A^{\alpha_A}$ and $X_B^{\alpha_B}$ be two $\alpha$-dissipative solutions
	with initial data $X_{0,A}^{\alpha_A}$ 
	and $X_{0,B}^{\alpha_B}$ in  $\mF$.
	Define
	\begin{equation}\label{eqn:G}
		\begin{split}
			G_{A,B}(\xi, t) 
		&= G(X_A^{\alpha_A}, X_B^{\alpha_B})(\xi, t)\\
		&= g_{A,B}(\xi, t)\bbo_{\mA_{A,B}(t)}(\xi) + \hat{g}_{A,B}(\xi, t)\bbo_{\mB_{A,B}(t)}(\xi) + \bar{g}_{A,B}(\xi, t)\bbo_{\Omega_{A,B}^c(t)}(\xi) \\
		&\quad\ + \frac{1}{4}\|\alpha_{A,B}'\|_\infty (V_{A,\xi} \wedge V_{B,\xi} )(\xi, t)\\
		&\hphantom{\quad\ + \frac{1}{4}}
		\times (\|V_{A,\xi}(\cdot,t)\|_1 + \|V_{B,\xi}(\cdot,t)\|_1 + 1)\\
		&\hphantom{\quad\ + \frac{1}{4}}
		\times (\bbo_{\mA_A^c(t)}(\xi) + \bbo_{\mA_B^c(t)}(\xi))\bbo_{\mB_{A,B}^c(t)}(\xi)
		\end{split}
	\end{equation}
and let 
\begin{equation}\label{def:MAB}
	M_{A,B} =
	\max(\|V_A(\cdot,0)\|_\infty, \|V_B(\cdot,0)\|_\infty)=	\max(\sup_{t\geq 0} \|V_A(\cdot,t)\|_\infty, \sup_{t\geq 0}\|V_B(\cdot,t)\|_\infty).
\end{equation} 

Then 
\begin{equation}\label{eqn:diffVleqG}
	\|(V_{A,\xi} - V_{B,\xi})(\cdot,t)\|_{i} \leq \|G_{A,B}(\cdot, t)\|_{i} \quad\ \text{ for } i = 1,2,
\end{equation}
and $G_{A,B}$ is a decreasing function over breaking times, 
i.e. 
\begin{equation*}
	G_{A,B}(\xi, \tau(\xi)) \leq \lim_{t \uparrow \tau(\xi)}G_{A,B}(\xi,t).
\end{equation*}

Furthermore, for $X_{0,A}^{\alpha_A}\in \mF_0$ 
	and $X_{0,B}^{\alpha_B}\in\mF$,
\begin{equation}\label{prop:ineq:FL1}
	\|G_{A,B}(\cdot, t)\|_1 \leq \|G_{A,B}(\cdot, 0)\|_1 + \int_0^t (\|G_{A,B}(\cdot,s)\|_1 + \frac14 M_{A,B} \|\alpha_{A,B}'\|_\infty \|G_{A,B}(\cdot,s)\|_1)\, ds,
\end{equation}
and
\begin{equation}\label{prop:ineq:FL2}
	\|G_{A,B}(\cdot,t)\|_2 \leq \|G_{A,B}(\cdot,0)\|_2 + \int_0^t (\|G_{A,B}(\cdot,s)\|_2 +\frac14 \sqrt{M_{A,B}} \|\alpha_{A,B}'\|_\infty \|G_{A,B}(\cdot,s)\|_1)\, d{s}.
\end{equation}
\end{prop}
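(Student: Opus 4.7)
The pointwise bound $|V_{A,\xi}(\xi,t) - V_{B,\xi}(\xi,t)| \leq G_{A,B}(\xi,t)$ is immediate, since the three indicators in $G_{A,B}$ partition $\R$ and in each region the active summand ($g_{A,B}$, $\hat g_{A,B}$ or $\bar g_{A,B}$) has $|V_{A,\xi}-V_{B,\xi}|$ as its leading term together with only non-negative corrections, while the final summand of $G_{A,B}$ is non-negative as well; integrating in $\xi$ (respectively squaring and integrating) yields \eqref{eqn:diffVleqG}. The overall plan for the remaining assertions is to combine a monotonicity step at the discrete wave-breaking instants with a Gronwall-type estimate on the smooth intervals between them, using the fact that $V_\xi$ is piecewise constant in time and the ODE estimates of Lemma~\ref{lem:ODEest}.

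For the monotonicity, I fix $\xi\in\R$ and trace its possible transitions between $\mA_{A,B}$, $\mB_{A,B}$ and $\Omega_{A,B}^c$, which can occur only at $\tau_A(\xi)$ or $\tau_B(\xi)$ and at which the corresponding $V_\xi$ jumps multiplicatively by $1-\alpha(y(\xi,\tau))$, while $y$ and $U$ remain continuous. In the representative case $\xi\in\mB_{A,B}(t^-)\to\mA_{A,B}(t^+)$ at $\tau := \tau_A(\xi) = \tau_B(\xi)$, the algebraic identity
\[
	V_{A,\xi}^+ - V_{B,\xi}^+ = (1-\alpha_A(y_A(\xi,\tau)))(V_{A,\xi}^- - V_{B,\xi}^-) + (\alpha_B(y_B(\xi,\tau)) - \alpha_A(y_A(\xi,\tau)))V_{B,\xi}^-,
\]
together with its companion obtained by exchanging $A$ and $B$ so as to produce the $\wedge$, gives $|V_{A,\xi}^+ - V_{B,\xi}^+| \leq |V_{A,\xi}^- - V_{B,\xi}^-| + |\alpha_A(y_A(\xi,\tau)) - \alpha_B(y_B(\xi,\tau))|(V_{A,\xi}^-\wedge V_{B,\xi}^-)$. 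Estimating $|\alpha_A(y_A) - \alpha_B(y_B)| \leq \|\alpha_A - \alpha_B\|_\infty + \|\alpha_{A,B}'\|_\infty|y_A - y_B|$ then recovers exactly the summands of $\hat g_{A,B}(\xi,\tau^-)$ as an upper bound for $g_{A,B}(\xi,\tau^+)$. The $\Omega_{A,B}^c$ transitions are handled by the analogous splitting, now referenced to $\id$ and $0$ instead of to the other solution, and the extra summand of $G_{A,B}$ involving $\|V_{A,\xi}\|_1 + \|V_{B,\xi}\|_1 + 1$ absorbs the residual when only one of the two solutions has broken.

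Given this monotonicity, $\|G_{A,B}(\cdot,t)\|_i$ has no positive jumps, so it suffices to estimate its time derivative on the smooth intervals between breakings, where $V_\xi$ and the set memberships are frozen and only $|y_A - y_B|$, $|U_A - U_B|$, $|y_A - \id|$, $|y_B - \id|$, $|U_A|$ and $|U_B|$ carry time dependence. Lemma~\ref{lem:ODEest}, together with \eqref{eqn:LagSys1}--\eqref{eqn:LagSys3}, bounds the derivatives of the $y$-factors by the corresponding $|U|$-type factors that already appear in $G_{A,B}$, producing the $\|G_{A,B}\|_1$ term in \eqref{prop:ineq:FL1}, while the derivatives of the $|U|$-factors are controlled by $\tfrac14\|V_{A,\xi}-V_{B,\xi}\|_1 \leq \tfrac14\|G_{A,B}\|_1$; multiplying by $\|\alpha_{A,B}'\|_\infty(V_{A,\xi}\wedge V_{B,\xi})$, integrating in $\xi$, and using $\int(V_{A,\xi}\wedge V_{B,\xi})\,d\xi \leq M_{A,B}$ then yields the $\tfrac14 M_{A,B}\|\alpha_{A,B}'\|_\infty\|G_{A,B}\|_1$ contribution and proves \eqref{prop:ineq:FL1}. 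Inequality \eqref{prop:ineq:FL2} follows along the same lines, now with the stronger bound $\|V_{A,\xi}\wedge V_{B,\xi}\|_2^2 \leq \|V_{A,\xi}\|_\infty \int(V_{A,\xi}\wedge V_{B,\xi})\,d\xi \leq M_{A,B}$ (made possible by $X_{0,A}\in\mF_0$, which forces $V_{A,\xi}\leq H_{A,\xi}\leq 1$, a property preserved by the evolution since $V_\xi$ only decreases) replacing $M_{A,B}$ by $\sqrt{M_{A,B}}$. The principal technical difficulty is the case analysis in the monotonicity step: $\bbo_{\mA_A^c}$, $\bbo_{\mA_B^c}$ and $\bbo_{\mB_{A,B}}$ switch independently, producing many subcases, and in each one must verify that the specifically chosen non-negative terms of $\hat g_{A,B}$, $\bar g_{A,B}$ or of the last summand of $G_{A,B}$ genuinely absorb the jump in $|V_{A,\xi}-V_{B,\xi}|$ across the wave-breaking transition.
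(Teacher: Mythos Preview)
Your approach is essentially that of the paper: establish the pointwise bound, prove the drop inequality at breaking times by case analysis, and then run an integral estimate on the smooth intervals using Lemma~\ref{lem:ODEest}. The paper carries out exactly the cases you sketch, including the $\mB_{A,B}\to\mA_{A,B}$ transition via the identity you wrote and the $\Omega_{A,B}^c$ transitions by referencing $\id$.

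There is, however, a misattribution of the role of the final summand of $G_{A,B}$ (the one carrying $\|V_{A,\xi}\|_1+\|V_{B,\xi}\|_1+1$). You place it in the monotonicity step, saying it ``absorbs the residual when only one of the two solutions has broken''. In fact the $\Omega_{A,B}^c$ monotonicity across $\tau_A$ and $\tau_B$ is handled entirely inside $\bar g_{A,B}$ (the paper checks $\bar g_{A,B}(\tau_A)\le\bar g_{A,B}(\tau_A^-)$ and $g_{A,B}(\tau_B)\le\bar g_{A,B}(\tau_B^-)$ directly), and the final summand is separately decreasing because its indicator support shrinks and $\|V_{i,\xi}\|_1$ is non-increasing. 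Where the final summand is genuinely needed is in the \emph{Gronwall} step on $\Omega_{A,B}^c$: your claim that ``the derivatives of the $|U|$-factors are controlled by $\tfrac14\|V_{A,\xi}-V_{B,\xi}\|_1$'' is correct for $|U_A-U_B|$ in $\hat g_{A,B}$, but in $\bar g_{A,B}$ the factors are $|U_A|$ and $|U_B|$ individually, whose time derivatives are bounded by $\tfrac14\|V_{A,\xi}\|_1$ and $\tfrac14\|V_{B,\xi}\|_1$, not by $\tfrac14\|G_{A,B}\|_1$. This produces an extra integrand
\[
\tfrac14\|\alpha_{A,B}'\|_\infty(V_{A,\xi}\wedge V_{B,\xi})\big(\|V_{A,\xi}\|_1+\|V_{B,\xi}\|_1\big)(\bbo_{\mA_A^c}+\bbo_{\mA_B^c})\bbo_{\mB_{A,B}^c},
\]
which is exactly what the final summand of $G_{A,B}(\cdot,s)$ dominates, so that the combined pointwise bound
\[
G_{A,B}(\xi,t)\le G_{A,B}(\xi,0)+\int_0^t\Big(G_{A,B}(\xi,s)+\tfrac14\|\alpha_{A,B}'\|_\infty(V_{A,\xi}\wedge V_{B,\xi})(\xi,s)\|G_{A,B}(\cdot,s)\|_1\,\bbo_{\mB_{A,B}(s)}(\xi)\Big)\,ds
\]
closes. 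With this correction in place your argument matches the paper's, and your $L^2$ endgame via $V_{A,\xi}\le H_{A,\xi}\le 1$ from $X_{0,A}\in\mF_0$ is exactly the paper's.
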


\begin{proof}
Relationship \eqref{eqn:diffVleqG} is an immediate consequence of the definition of $G_{A,B}$.

We have tactically constructed $G_{A,B}$ such that it can be split into four parts. The first three are defined on disjoint sets whose union is the entire real line, and the final term is necessary in order to obtain \eqref{prop:ineq:FL1} and \eqref{prop:ineq:FL2}.

For a function $h:\R \to \R$ we use the notation, $h(t-) \coloneqq \lim_{s\uparrow t} h(s)$. Furthermore, we drop the $\xi$ for ease of readability, in the following computations.

We begin by demonstrating that $G_{A,B}$ decreases over breaking times $\tau(\xi)$. 

Consider $ \xi \in \mA_{A,B}(t) $ for all time.
These particles do not experience wave breaking, thus the energy at these points is retained, and hence
\begin{equation*}
	 g_{A,B}(t) = |V_{A,\xi}(t)-V_{B,\xi}(t)| 
\end{equation*}
is constant.
For other values of $\xi$ things are not so simple.

For $\xi \in \mB_{A,B}(0)$, at time $\tau(\xi)$, we have
\begin{align*}
	g_{A,B}(\tau) 
	&= |V_{A,\xi}(\tau) - V_{B,\xi}(\tau)|\\
	&= |V_{A,\xi}(\tau-)(1-\alpha_A(y_A(\tau-))) - V_{B,\xi}(\tau-)(1-\alpha_B(y_B(\tau-)))|\\
	&\leq |V_{A,\xi}(\tau-) - V_{B,\xi}(\tau-)|(1-\alpha_A(y_A(\tau-))) \\
	&\quad\ + |\alpha_A(y_A(\tau-)) - \alpha_B(y_B(\tau-))|V_{B,\xi}(\tau-)\\
	&\text{or}\\
	&\leq |V_{A,\xi}(\tau-) - V_{B,\xi}(\tau-)|(1-\alpha_B(y_B(\tau-))) \\
	&\quad\ + |\alpha_A(y_A(\tau-)) - \alpha_B(y_B(\tau-))|V_{A,\xi}(\tau-).
\end{align*}
Using that, for any $t \in [0, +\infty)$,
\begin{align*}
	|\alpha_A(y_A(t)) - \alpha_B(y_B(t))| 
	&\leq |\alpha_A(y_A(t)) - \alpha_B(y_A(t))| + |\alpha_B(y_A(t)) - \alpha_B(y_B(t))| \\
	&\leq \|\alpha_A - \alpha_B\|_\infty + \|\alpha_B'\|_\infty |y_A(t) - y_B(t)|
\end{align*}
and similarly
\[
	|\alpha_A(y_A(t)) - \alpha_B(y_B(t))| 
	\leq \|\alpha_A - \alpha_B\|_\infty + \|\alpha_A'\|_\infty |y_A(t) - y_B(t)|
\]
we find that
\begin{align*}
	g_{A,B}(\tau) 
	&\leq |V_{A,\xi}(\tau-) - V_{B,\xi}(\tau-)| + |\alpha_A(y_A(\tau-)) - \alpha_B(y_B(\tau-))|\big(V_{A,\xi}(\tau-) \wedge V_{B,\xi}(\tau-)\big)\\
	&\leq |V_{A,\xi}(\tau-) - V_{B,\xi}(\tau-)| 
	+ \|\alpha_A - \alpha_B\|_\infty \big(V_{A,\xi}(\tau-) \wedge V_{B,\xi}(\tau-)\big)\\
	&\quad\ + \|\alpha_{A,B}'\|_\infty|y_A(\tau-) - y_B(\tau-)|\big(V_{A,\xi}(\tau-) \wedge V_{B,\xi}(\tau-)\big)\\
	&\leq \hat{g}_{A,B}(\tau-).
\end{align*}

For $\xi \in \Omega_{A,B}^c(0)$, we consider two possibilities. First, we can have one solution breaking at time $\tau(\xi)$, and the other never breaking. Suppose $X_A^{\alpha_A}$ breaks at $\tau_A(\xi)$, then
\begin{align*}
	g_{A,B}(\tau_A) 
	&= |V_{A,\xi}(\tau_A) - V_{B,\xi}(\tau_A)|\\
	&= |V_{A,\xi}(\tau_A-)(1-\alpha_A(y_A(\tau_A-))) - V_{B,\xi}(\tau_A-)|\\
	&\leq |V_{A,\xi}(\tau_A-) - V_{B,\xi}(\tau_A-)| + \alpha_A(y_A(\tau_A-))(V_{A,\xi}(\tau_A-) \wedge V_{B,\xi}(\tau_A-))\\
	&\leq |V_{A,\xi}(\tau_A-) - V_{B,\xi}(\tau_A-)|\\
		&\quad\ + (\alpha_A(y_A(\tau_A-)) - \alpha_A(\id)
		+ \alpha_A(\id)) (V_{A,\xi}(\tau_A-) \wedge V_{B,\xi}(\tau_A-))\\
	&\leq |V_{A,\xi}(\tau_A-) - V_{B,\xi}(\tau_A-)| \\
		&\quad\ + (\|\alpha_A'\|_\infty |y_A(\tau_A-) - \id| + \alpha_A(\id)) (V_{A,\xi}(\tau_A-)\wedge V_{B,\xi}(\tau_A-))\\
	&\leq \bar{g}_{A,B}(\tau_A-).
\end{align*}

The last case is where both break at different times. Suppose $X_A^{\alpha_A}$ breaks first, and $X_B^{\alpha_B}$ second. At time $\tau_B$, we can use the previous result, hence 
\begin{equation*}
g_{A,B}(\tau_B) \leq \bar{g}_{A,B}(\tau_B -). 
\end{equation*}
At time $ \tau_A$, we have
\begin{align*}
	\bar{g}_{A,B}(\tau_A)
	&= |(1-\alpha_A(y_A(\tau_A-)))V_{A,\xi}(\tau_A-) - V_{B,\xi}(\tau_A-)|\\
	&\quad\ + \alpha_B(\id) ((1-\alpha_A(y_A(\tau_A-)))V_{A,\xi}(\tau_A-) \wedge V_{B,\xi}(\tau_A-))\\
	&\quad\ + \|\alpha_{A,B}'\|_\infty 
		\bigg(
			|y_B(\tau_A-) - \id| + |U_A(\tau_A-)| + |U_B(\tau_A-)|
		\bigg)\\
		&\hphantom{\quad\ + \|\alpha'\|_\infty} 
		\times ((1-\alpha_A(y_A(\tau_A-)))V_{A,\xi}(\tau_A-) \wedge V_{B,\xi}(\tau_A-))\\
	&\leq |V_{A,\xi}(\tau_A-) - V_{B,\xi}(\tau_A-)| 
			+ \alpha_B(\id) (V_{A,\xi}(\tau_A-) \wedge V_{B,\xi}(\tau_A-))\\
	&\quad\ + (\alpha_A(y_A(\tau_A-)) - \alpha_A(\id) + \alpha_A(\id))(V_{A,\xi}(\tau_A-) \wedge V_{B,\xi}(\tau_A-))\\
		&\quad\ + \|\alpha_{A,B}'\|_\infty 
			\bigg(
				|y_B(\tau_A-) - \id| + |U_A(\tau_A-)| + |U_B(\tau_A-)|
			\bigg) (V_{A,\xi}(\tau_A-) \wedge V_{B,\xi}(\tau_A-))\\
	&\leq |V_{A,\xi}(\tau_A-) - V_{B,\xi}(\tau_A-)| 
			+ (\alpha_A(\id) + \alpha_B(\id)) (V_{A,\xi}(\tau_A-) \wedge V_{B,\xi}(\tau_A-))\\
		&\quad\ + \|\alpha_{A,B}'\|_\infty 
			\bigg(
				|y_A(\tau_A-) - \id| + |y_B(\tau_A-) - \id| + |U_A(\tau_A-)| + |U_B(\tau_A-)|
			\bigg) \\
			& \hphantom{\quad\ + \|\alpha'\|_\infty }
			\times (V_{A,\xi}(\tau_A-) \wedge V_{B,\xi}(\tau_A-))\\
	&\leq \bar{g}_{A,B}(\tau_A- ).
\end{align*}

The final term in $G_{A,B}$ is decreasing in time, because $\Vert V_{i,\xi}(\cdot,t)\Vert_1$ with $i=1,2$ is decreasing and the sets
$(\mA_i^c\cap \mB_{A,B}^c)(t)= (\mA_i\cup \mB_{A,B})^c$, with $i = A,B$, are shrinking in time,
and thus the respective indicator functions are decreasing in time.

Hence we have that $G_{A,B}(\tau) \leq G_{A,B}(\tau-)$ for all breaking times $\tau$.

\vspace{2mm}
We now wish to obtain our estimate backwards in time. We consider an arbitrary time $t$, and construct different estimates depending on what set $\xi$ is in at time $t$. As we know that $G_{A,B}(\xi,t)$ decreases over breaking times, we can employ a strategy of constructing an estimate backwards to the most recent breaking time $\tau(\xi)$, or zero if no breaking occurs in the past. Assuming we hit another breaking time, $\xi$ may enter a different set, and we can then employ our estimate for that set.

To make our strategy clearer we consider the first case, that is $\xi \in \mA_{A,B}(t)$.
In this case particle $\xi$ experienced wave breaking in the past for at least one,
or neither, of the solutions.
Set $\hat\tau(\xi)$ to be the largest of the two breaking times,
or zero if neither broke. Then
\[
	g_{A,B}(t)=|V_{A,\xi}(t) - V_{B, \xi}(t)| = |V_{A,\xi}(\hat\tau) - V_{B,\xi}(\hat \tau)|=g_{A,B}(\hat \tau).
\]
If $\hat \tau(\xi)>0$,
 depending on which set $\xi$ sat in before $\tau(\xi)$, we can employ one of our previous estimates. For example, if $\xi$ was in $\mB_{A,B}(t)$ for $t < \tau(\xi)$, we can use that
\[
	g_{A,B}(\tau) = 
	|V_{A,\xi}(\tau) - V_{B,\xi}(\tau)| 
	\leq \hat{g}_{A,B}(\tau-).
\]
We can then employ the next estimate we calculate.

Consider $\xi \in \mB_{A,B}(t)$. Then, using the estimates we have obtained in Lemma \ref{lem:ODEest}, we have
\begin{align*}
	\hat{g}_{A,B}(t) 
	&\leq \hat{g}_{A,B}(0) +\|\alpha_{A,B}'\|_\infty \int_0^t (V_{A,\xi}(s) \wedge V_{B,\xi}(s))\\
	&\hphantom{\leq \hat{g}_{A,B}(0) +\|\alpha_{A,B}'\|_\infty \int_0^t}
	\times\bigg( |U_A(s) - U_B(s)| + \frac{1}{4}\|V_{A,\xi}(s) - V_{B,\xi}(s)\|_1 \bigg)\  ds\\
	&\leq \hat{g}_{A,B}(0) +\|\alpha_{A,B}'\|_\infty \int_0^t (V_{A,\xi}(s) \wedge V_{B,\xi}(s)) \bigg( |U_A(s) - U_B(s)| + \frac{1}{4} \|G_{A,B}(s)\|_1 \bigg)\, ds \\
	&\leq \hat{g}_{A,B}(0) + \int_0^t \bigg(  \hat{g}_{A,B}(s) + \frac{1}{4} \|\alpha_{A,B}'\|_\infty (V_{A,\xi}(s) \wedge V_{B,\xi}(s))\|G_{A,B}(s)\|_1 \bigg)\, ds.
\end{align*}

Finally, we consider $\xi \in \Omega_{A,B}^c(t)$. We have
\begin{align*}
	|y_A(t) - \id|(V_{A,\xi}(t) \wedge V_{B,\xi}(t)) 
	&\leq |y_A(0) - \id|(V_{A,\xi}(0) \wedge V_{B,\xi}(0))\\
	&\quad\ + \int_0^t |U_A(s)|(V_{A,\xi}(s) \wedge V_{B,\xi}(s))\, ds
\end{align*}
and
\begin{align*}
	|U_A(t)|(V_{A,\xi}(t) \wedge V_{B,\xi}(t))
	&\leq |U_A(0)|(V_{A,\xi}(0) \wedge V_{B,\xi}(0))\\
	&\quad\ + \frac{1}{4} \int_0^t \|V_{A,\xi}(s)\|_1 (V_{A,\xi}(s) \wedge V_{B,\xi}(s))\, ds.
\end{align*}
Assume without loss of generality that $\tau_A(\xi) < \tau_B(\xi)$.
First, we consider $\tau_A(\xi) < t < \tau_B(\xi)$.
Then
\begin{align*}
	\bar{g}_{A,B}(t) 
	&= |V_{A,\xi}(t) - V_{B,\xi}(t)| 
		+ \alpha_B(\id) 
		\big(
			V_{A,\xi}(t) \wedge V_{B,\xi}(t) 
		\big) \\
	&\quad\ 
		+ \|\alpha_{A,B}'\|_\infty 
		\big(
				|y_B(t) - \id| 
			+ |U_A(t)| 
			+ |U_B(t)| 
		\big) 
		\big( 
			V_{A,\xi}(t) \wedge V_{B,\xi}(t) 
		\big)\\
	&\leq \bar{g}_{A,B}(\tau_A)
		+ \|\alpha_{A,B}'\|_\infty \int_{\tau_A}^t 
		\big( 
			V_{A,\xi}(s) \wedge V_{B,\xi}(s) 
		\big)
		\\
	&\hphantom{\leq \bar{g}(\tau_A) + \|\alpha'\|_\infty \int_{\tau_A}^t}
	\quad\ \times
		 \big(|U_{B}(s)| 
		+ \frac{1}{4}\|V_{A,\xi}(s)\|_1 
		+ \frac{1}{4}\|V_{B,\xi}(s)\|_1\big)\, ds . 
\end{align*}
For the case where $t < \tau_A(\xi) < \tau_B(\xi)$, we find
\begin{align*}
	\bar{g}_{A,B}(t)
	&= |V_{A,\xi}(t)  - V_{B,\xi}(t) |
	+(\alpha_A(\id) + \alpha_B(\id)) \big(V_{A,\xi}(t) \wedge V_{B,\xi}(t)\big)\\
	&\quad\ + \|\alpha_{A,B}'\|_\infty \big(V_{A,\xi}(t) \wedge V_{B,\xi}(t)\big) \big( |y_A(t) - \id| + |y_B(t) - \id| \\
	&\hphantom{\quad\ + \|\alpha'\|_\infty \big(V_{A,\xi}(t) \wedge V_{B,\xi}(t)\big) \big(}
	+ 2|U_A(t)| + 2|U_B(t)| \big)\\
	&\leq \bar{g}_{A,B}(0) 
		+ \|\alpha_{A,B}'\|_\infty \int_{0}^t \big( V_{A,\xi}(s) \wedge V_{B,\xi}(s) \big)\\
		&\hphantom{\bar{f}(0) + \|\alpha'\|_\infty \int_{0}^t}\quad\
		\times \big(|U_{A}(s)| + |U_B(s)| +  \frac{1}{2}\|V_{A,\xi}(s)\|_1 + \frac{1}{2}\|V_{B,\xi}(s)\|_1\big)\, ds .
\end{align*}
The case where one breaks and the other does not can be analysed in a similar manner.
In the end, we see that for any $t$ such that the final wave breaking time has not occurred, we have
\begin{align*}
	\bar{g}_{A,B}(t) 
	&\leq \bar{g}_{A,B}(0) 
		+ \|\alpha_{A,B}'\|_\infty \int_0^t (V_{A,\xi}(s) \wedge V_{B,\xi}(s)) \bigg( |U_A(s)| + |U_B(s)|\\
	&\hphantom{\leq \hat{f}(0) + \|\alpha'\|_\infty \int_0^t \bigg( } 
		+ \frac{1}{4} \|V_{A,\xi}(s)\|_1 + \frac{1}{4}\|V_{B,\xi}(s)\|_1 \bigg)(\bbo_{\mA_A^c(s)} + \bbo_{\mA_B^c(s)})\, ds\\
		& \leq \bar g_{A,B}(0)+ \int_0^t \bar g_{A,B}(s) ds\\
		& \quad +\frac14 \|\alpha_{A,B}'\|_\infty \int_0^t (V_{A,\xi}(s) \wedge V_{B,\xi}(s))\bigg(\|V_{A,\xi}(s)\|_1 + \|V_{B,\xi}(s)\|_1 \bigg)\\
		& \qquad\qquad\qquad \qquad\qquad\qquad \qquad \qquad \quad \times(\bbo_{\mA_A^c(s)} + \bbo_{\mA_B^c(s)})\bbo_{\mB_{A,B}^c(s)}\, ds\\
\end{align*}

As pointed out earlier, the final term in \eqref{eqn:G} is decreasing with respect to time.

Combining all these estimates together, we have
\begin{equation}\label{prop:ineq:FEst}
	\begin{split}
		G_{A,B}(\xi, t) 
		&\leq G_{A,B}(\xi, 0) \\
		&\quad\ 
		+ \int_0^t 
		\bigg(
			G_{A,B}(\xi, s)
				+	\frac{1}{4} \|\alpha_{A,B}'\|_\infty
				(V_{A,\xi}(\xi, s) \wedge V_{B,\xi}(\xi, s))
				\|G_{A,B}(s)\|_1 
				\bbo_{\mB_{A,B}(s)}(\xi)
		\bigg)\, ds.
	\end{split}
\end{equation}
Taking the $L^1$ norm with respect to $\xi$ of \eqref{prop:ineq:FEst}, we have
\begin{equation}
	\|G_{A,B}(t)\|_1 
	\leq \|G_{A,B}(0)\|_1 
	+ \int_0^t 
	\left(
			\|G_{A,B}(s)\|_1 + \frac14 M_{A,B} \|\alpha_{A,B}'\|_\infty \|G_{A,B}(s)\|_1
	\right)\, ds.
\end{equation}
Taking the $L^2$ norm with respect to $\xi$ of \eqref{prop:ineq:FEst}, we have
\begin{equation}
	\|G_{A,B}(t)\|_2 
	\leq \|G_{A,B}(0)\|_2 
	+ \int_0^t 
	\left(
		\|G_{A,B}(s)\|_2 + \frac14 \sqrt{M_{A,B}} \|\alpha_{A,B}'\|_\infty \|G_{A,B}(s)\|_1
	\right)\, ds,
\end{equation}
where we have used Minkowski's inequality, and that, as $|V_{A,\xi}(\xi, t)|\leq 1$ by assumption,
\[
	\int_\R (V_{A,\xi} \wedge V_{B,\xi})^2(\xi, t)\, d\xi 
	\leq \int_\R (V_{A,\xi} \wedge V_{B,\xi})(\xi, t)\, d\xi \leq M_{A,B}.
\]
\end{proof}

We then define our norm $ D:\mF \times \mF \to \R $ by
\begin{equation}\label{eqn:D}
\begin{split}
	D(X_A^{\alpha_A}, X_B^{\alpha_B}) 
	&= \|y_A - y_B\|_\infty
	+ \|U_A - U_B\|_\infty\\
	&\quad\ + \|y_{A,\xi} - y_{B,\xi}\|_2
	+ \|U_{A,\xi} - U_{B,\xi}\|_2\\
	&\quad\ + \|H_A - H_B\|_\infty 
	+ \frac{1}{4} \|G_{A,B}\|_1 
	+ \frac{1}{2} \|G_{A,B}\|_2\\
	&\quad\ + \|\alpha_A - \alpha_B\|_\infty.
\end{split}
\end{equation}

\begin{note}
	Note that $G_{A,B}$, and hence $D$, does not satisfy the triangle inequality.
	$D$, however, satisfies the other properties in the definition of a metric on the space of
	Lagrangian coordinates. Thus, it is a semi-metric.

	As we will see in Section~\ref{sec:ToAMet}, the triangle inequality is not necessary
	for our final metric construction. This is due to Lemma~\ref{lem:genMetric}.
\end{note}

\begin{lem}\label{lem:lipD}
	Let $ X_A^{\alpha_A}$ and $ X_B^{\alpha_B} $ in $ \mF $ be $\alpha$-dissipative solutions
	with initial data $X_{0,A}^{\alpha_A} \in \mF_0$ and $ X_{0,B}^{\alpha_B} \in \mF $, respectively.
	Then
	\[
		D(X_A^{\alpha_A}(t), X_B^{\alpha_B}(t)) 
		\leq e^{C_{A,B} t} D(X_{0,A}^{\alpha_A}, X_{0,B}^{\alpha_B}),
	\] 
	with
 \begin{equation}\label{const:CM}
	C_{A,B} = 2 +\frac14 \|\alpha_{A,B}'\|_\infty (M_{A,B}+2\sqrt{M_{A,B}})
\end{equation}
and $M_{A,B}$ given by \eqref{def:MAB}.
\end{lem}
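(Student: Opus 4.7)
The plan is to apply Corollary~\ref{cor:lagEsts} and Proposition~\ref{prop:G} term-by-term to the eight summands of $D$, combine them into an integral inequality of the form $D(t) \leq D(0) + C_{A,B} \int_0^t D(s)\, ds$, and conclude by Grönwall's inequality.

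First, I would dispose of the two trivial terms. By equation \eqref{eqn:LagSys3} the variable $H$ is constant in time, so $\|H_A(t) - H_B(t)\|_\infty = \|H_A(0) - H_B(0)\|_\infty$. Similarly, $\alpha$ is time-independent by Definition~\ref{def:asolLag}, so $\|\alpha_A - \alpha_B\|_\infty$ is constant. Both contribute only their initial values on the right hand side.

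Next, for the four Corollary~\ref{cor:lagEsts} terms, I would note that \eqref{eqn:diffVleqG} gives $\|V_{A,\xi}(s) - V_{B,\xi}(s)\|_i \leq \|G_{A,B}(\cdot,s)\|_i$ for $i = 1,2$, so the four integrands bound respectively by $\|U_A - U_B\|_\infty$, $\tfrac14\|G_{A,B}\|_1$, $\|U_{A,\xi} - U_{B,\xi}\|_2$, and $\tfrac12\|G_{A,B}\|_2$, i.e.\ exactly the summands $T_2, T_6, T_4, T_7$ in the expansion of $D$. For the two $\|G_{A,B}\|_i$ terms, inequalities \eqref{prop:ineq:FL1} and \eqref{prop:ineq:FL2} from Proposition~\ref{prop:G}, multiplied by $\tfrac14$ and $\tfrac12$ respectively, give integrands
\[
	T_6 + \tfrac{1}{4} M_{A,B}\|\alpha_{A,B}'\|_\infty T_6
	\quad\text{and}\quad
	T_7 + \tfrac{1}{2}\sqrt{M_{A,B}}\|\alpha_{A,B}'\|_\infty T_6.
\]

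Summing all eight estimates gives
\[
	D(t) \leq D(0) + \int_0^t \Bigl[ T_2(s) + T_4(s) + 2 T_6(s) + 2 T_7(s) + \tfrac{1}{4}(M_{A,B} + 2\sqrt{M_{A,B}})\|\alpha_{A,B}'\|_\infty\, T_6(s) \Bigr]\, ds.
\]
Here the key observation is that each $T_i \geq 0$, so $T_2 + T_4 + T_6 + T_7 \leq D$, which doubles to $T_2 + T_4 + 2 T_6 + 2 T_7 \leq 2 D$. Combined with $T_6 \leq D$ in the last term, one obtains $D(t) \leq D(0) + C_{A,B} \int_0^t D(s)\, ds$ with $C_{A,B}$ as in \eqref{const:CM}. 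Grönwall's inequality then concludes the proof.

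The only mild obstacle is the coefficient bookkeeping: the coefficients $\tfrac14$ and $\tfrac12$ in front of $\|G_{A,B}\|_1$ and $\|G_{A,B}\|_2$ in \eqref{eqn:D} have been chosen precisely so that the $V_\xi$-difference integrands produced by Corollary~\ref{cor:lagEsts} match the $T_6$ and $T_7$ summands of $D$, and so that the additional $\|\alpha_{A,B}'\|_\infty$ contributions from Proposition~\ref{prop:G} collapse to the advertised combination $M_{A,B} + 2\sqrt{M_{A,B}}$; verifying this alignment is what makes the constant come out as claimed.
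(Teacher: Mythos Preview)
Your proposal is correct and follows essentially the same approach as the paper's own proof: combine Corollary~\ref{cor:lagEsts} (with \eqref{eqn:diffVleqG} replacing the $V_\xi$-differences by $G_{A,B}$), Proposition~\ref{prop:G}, and the trivial time-independence of $H$ and $\alpha$, sum, bound the integrand by $C_{A,B}D$, and apply Gr\"onwall. Your coefficient bookkeeping is the explicit version of what the paper leaves implicit, and it checks out.
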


\begin{proof}
We have, combining \eqref{eqn:diffVleqG} with Corollary \ref{cor:lagEsts}, 
\begin{subequations}
\begin{align*}
	\|U_A(t)-U_B(t)\|_\infty &\leq \|U_A(0) - U_B(0)\|_\infty + \frac{1}{4} \int_0^t \|G_{A,B}(s)\|_1\, ds,\\
	\|U_{A,\xi}(t)-U_{B,\xi}(t)\|_2 &\leq \|U_{A,\xi}(0) - U_{B,\xi}(0)\|_2 + \hlf \int_0^t \|G_{A,B}(s)\|_2\, ds.
\end{align*}
\end{subequations}

Combining these inequalities with our estimates from Corollary~\ref{cor:lagEsts} and Proposition~\ref{prop:G}, we have
\begin{multline}
	D(X_A^{\alpha_A}(t), X_B^{\alpha_B}(t))
	\leq D(X_{0,A}^{\alpha_A},X_{0,B}^{\alpha_B})\\
	+ \big(2 + \frac14\|\alpha_{A,B}'\|_\infty (M_{A,B}+2\sqrt{M_{A,B}}) \big) 
	\int_0^t D(X_A^{\alpha_A}(s),X_B^{\alpha_B}(s))\, ds.
\end{multline}

The result then follows from Gr\"{o}nwall's inequality.
\end{proof}

One final result we will make use of in the next section is as follows.
\begin{lem}\label{lem:JPiXAXB}
Let $X_A^{\alpha_A}$ and $X_B^{\alpha_B}$ be two $\alpha$-dissipative solutions with initial data $X_{0,A}^{\alpha_A}$
and $X_{0,B}^{\alpha_B}$ in $\mF_0$.
Given $t\geq 0$, let $f\in \mG$ such that $\Pi(X_A^{\alpha_A}(t))=X_A^{\alpha_A}(t)\circ f$ and $h\in \mG$.
Then,
\begin{align}\label{lem:JPiXaXb-Goal}
	D(\Pi(X_A^{\alpha_A}(t)),X_B^{\alpha_B}(t) \circ h)
	&= D(X_A^{\alpha_A}(t)\circ f,X_B^{\alpha_B}(t) \circ h) \\ \nonumber
	&\leq e^{(2 \bar{M}_{A,B} + \frac{1}{4})t} D(X_A^{\alpha_A}(t), X_B^{\alpha_B}(t)\circ w),
\end{align}
where  $w = h \circ f^{-1} \in \mG$ and $\bar M_{A,B}=M_{A,B}\vee 1$.
\end{lem}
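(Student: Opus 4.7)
The first equality in~\eqref{lem:JPiXaXb-Goal} is immediate from the hypothesis that $\Pi(X_A^{\alpha_A}(t)) = X_A^{\alpha_A}(t)\circ f$. For the inequality I would set $w = h\circ f^{-1}$, so that
\[
(X_A^{\alpha_A}(t)\circ f,\,X_B^{\alpha_B}(t)\circ h) = (X_A^{\alpha_A}(t),\,X_B^{\alpha_B}(t)\circ w)\circ f,
\]
and then estimate each term of $D$ under the common relabelling by $f$ via the change of variables $\eta = f(\xi)$. The sup-norm terms and $\|\alpha_A - \alpha_B\|_\infty$ are invariant since $f$ is a bijection. For $\frac{1}{4}\|G_{A,B}\|_1$, the derivatives of composed functions produce a Jacobian $f_\xi$ (e.g.\ $(V_A\circ f)_\xi = V_{A,\xi}\circ f\cdot f_\xi$) that cancels $d\xi/d\eta$, and the indicator sets transform according to $\mA^f = f^{-1}(\mA)$ etc., via~\eqref{eqn:SetRelAfh}--\eqref{eqn:SetRelOmfh} with $h = f$; the ``diagonal'' part of $\|G_{A,B}\|_1$ built from $|V_\xi - V_\xi|$ and $(V_\xi\wedge V_\xi)$ is thus invariant as well.

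The $L^2$ terms $\|y_{A,\xi}-y_{B,\xi}\|_2$, $\|U_{A,\xi}-U_{B,\xi}\|_2$, and $\frac{1}{2}\|G_{A,B}\|_2$ pick up, after change of variables, a weight $f_\xi\circ f^{-1}$. Since $f = (y_A(t)+H_A(t))^{-1}$, this weight equals $1/(y_{A,\xi}(t)+H_{A,\xi}(t))$. I would bound it using the Lagrangian constraints: $y_\xi V_\xi = U_\xi^2$ with $V_\xi\leq H_\xi$ yields by AM--GM that $|U_\xi|\leq \frac{1}{2}(y_\xi+H_\xi)$; combined with $\partial_t(y_\xi+H_\xi) = U_\xi$ and the initial value $(y_\xi+H_\xi)(0) = 1$ (which holds because $X_{0,A}^{\alpha_A}\in\mF_0$), Gr\"onwall gives $(y_{A,\xi}+H_{A,\xi})(t)\geq e^{-t/2}$. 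Hence $\|f_\xi\circ f^{-1}\|_\infty\leq e^{t/2}$, and each $L^2$ norm picks up at most a factor $e^{t/4}$, accounting for the $1/4$ in the exponent.

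The remaining factor $e^{2\bar M_{A,B} t}$ comes from the $\bar g$ term of $G$ and the final correction term in~\eqref{eqn:G}, which involve the absolute positions $|y_A - \id|, |y_B - \id|$ and velocities $|U_A|, |U_B|$ multiplied against $(V_{A,\xi}\wedge V_{B,\xi})$. Under the relabelling these become $|y_A\circ f - \id|$, $|U_A\circ f|$, etc.; the key observation is that $(y_A\circ f)+(H_A\circ f) = \id$ gives $|y_A\circ f - \id| = H_A\circ f$. These transformed quantities then have to be re-expressed in terms of pre-relabelled ones via the energy bound $\|V_A\|_\infty,\|V_B\|_\infty \leq M_{A,B}$ together with the time-integrated estimates of Lemma~\ref{lem:ODEest}. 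The main obstacle is exactly this bookkeeping: tracking how the position- and velocity-type corrections in $G$ transform across the sets $\mA^c$, $\mB^c$, and $\Omega^c$, matching them back to quantities available in $D(X_A^{\alpha_A}(t),X_B^{\alpha_B}(t)\circ w)$, and combining with the $L^2$ weight bound to produce exactly the claimed factor $e^{(2\bar M_{A,B}+1/4)t}$.
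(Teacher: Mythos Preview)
Your outline matches the paper's proof in structure: sup-norms are invariant, the $L^2$ terms pick up $\|f_\xi\|_\infty^{1/2}\leq e^{t/4}$ via the Gr\"onwall argument you give, and the remaining growth comes from $\bar g$. The one piece you are circling but have not pinned down is the estimate
\[
|f^{-1}(\xi)-\xi| = |(y_A+H_A)(\xi,t)-(y_A+H_A)(\xi,0)| \leq \int_0^t |U_A(\xi,s)|\,ds \leq |U_A(\xi,t)|\,t + \tfrac{1}{4}M_{A,B}t^2,
\]
which the paper derives from $X_{0,A}^{\alpha_A}\in\mF_0$ and the ODE. This is the linchpin for $\bar g$: after composing with $f^{-1}$, the identity in $\bar g(X_A^{\alpha_A}\circ f,X_B^{\alpha_B}\circ h)$ becomes $f^{-1}$, so one must control $|y_A-f^{-1}|$, $|y_B\circ w-f^{-1}|$, and crucially $|\alpha_i(f^{-1})-\alpha_i(\id)|\leq \|\alpha_i'\|_\infty|f^{-1}-\id|$. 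Your observation $|y_A\circ f-\id|=H_A\circ f$ handles only the first of these; it does not touch the $\alpha_i(\id)$ shift or the $|y_B\circ w-\id|$ term, both of which require the displayed bound on $|f^{-1}-\id|$ directly. Once that estimate is in hand, each occurrence contributes a correction of the form $\|\alpha_{A,B}'\|_\infty(V_{A,\xi}\wedge(V_B\circ w)_\xi)(|U_A|t+\tfrac14 M_{A,B}t^2)$, and these are absorbed \emph{jointly} by $\bar g$ (which already contains $|U_A|$) and the last term of $G$ (which supplies the constant needed for the $M_{A,B}t^2$ part), yielding the factor $1+2t+2M_{A,B}t^2\leq e^{2\bar M_{A,B}t}$. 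So the final term of $G$ does not itself produce growth under relabelling---it transforms exactly---but it is what allows the $\bar g$ overflow to be bounded by a multiple of $G$.
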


\begin{proof}
To begin with note that while $h$ can be any function in $\mG$, the function $f$ is unique and depends on the chosen time $t$. In particular one has, see e.g.  \cite{grunert2021lipschitz}, that 
\begin{equation}\label{est:f1}
0\leq f_\xi(\xi)\leq e^{\frac12 t} \quad \text{ for a.e. }\xi\in \R. 
\end{equation}
Furthermore, the group property implies, that $f^{-1}(\xi)=(y_A+H_A)(\xi,t)$, and hence  \eqref{eqn:LagSys}, \eqref{ineq:ODEestU}, and $X_{0,A}^{\alpha_A}$ in $\mF_0$ yield
\begin{align}\nonumber
\vert f^{-1}(\xi)-\xi\vert &=\vert (y_A+H_A)(\xi,t)-(y_A+H_A)(\xi,0)\vert\\ \nonumber
&  \leq \int_0^t \vert U_A(\xi,s) \vert ds\\ \nonumber
&  \leq \vert U_A(\xi,t)\vert t+ \frac14 \Vert V_A(0)\Vert_\infty t^2\\ \label{eqn:finvODE}
& \leq \vert U_A(\xi,t)\vert t+\frac14 M_{A,B} t^2
\end{align}
for all $\xi\in \R$.

Keeping these estimates in mind, we drop the $t$ in $X_A^{\alpha_A}(t)$ and $X_B^{\alpha_B}(t)$ for ease in readability. 

It is immediate that
\begin{subequations}\label{proof:eqn:JPiXAXB1}
	\begin{align}
		\|y_A \circ f - y_B \circ h\|_{\infty}
		&=
		\|y_A - y_B \circ w\|_{\infty},
		\\
		\|U_A \circ f - U_B \circ h\|_{\infty}
		&=
		\|U_A - U_B \circ w\|_{\infty},
		\\
		\|H_A \circ f - H_B \circ h\|_{\infty}
		&=
		\|H_A - H_B \circ w\|_{\infty}.
	\end{align}
\end{subequations}

Note that, for any function $F:\R \to \R$ differentiable at $\xi \in \R$,
\begin{equation*}
	(F \circ h)_\xi \circ f^{-1} (\xi)
	= ( F \circ w )_\xi(\xi) f_\xi \circ f^{-1} (\xi).
\end{equation*}
Thus, after using the substitution $\eta = f(\xi)$ and \eqref{est:f1},
\begin{equation}\label{proof:eqn:JPiXAXB2}
	\begin{split}
		\|(y_A \circ f)_\xi - (y_B \circ h)_\xi\|_{2}^2 & = \int_{\R} \vert (y_A\circ f)_\xi-(y_B\circ h)_\xi\vert ^2\circ f^{-1} (f^{-1})_\xi (\eta) d\eta \\
		&=
		\int_{\R}^{}
		|y_{A,\xi}(\eta) - ( y_B \circ w )_\xi(\eta)|^2 f_\xi \circ f^{-1}(\eta)
		\ d\eta
		\\
		&\leq 
		\|f_\xi\|_\infty \|y_{A,\xi} - (y_B \circ w)_\xi\|_2^2\\
		& \leq e^{\frac12 t}\|y_{A,\xi} - (y_B \circ w)_\xi\|_2^2.
	\end{split}
\end{equation}
And similarly, one finds
\begin{equation}\label{proof:eqn:JPiXAXB3}
	\|(U_A \circ f)_\xi - (U_B \circ h)_\xi\|_{2}^2
	\leq
	e^{\frac12 t} \|U_{A,\xi} - (U_B \circ w)_\xi\|_2^2.
\end{equation}

We wish to show that
\begin{equation}\label{ineq:toProveG}
	\begin{split}
		G(X_A^{\alpha_A} \circ f, X_B^{\alpha_B} \circ h) \circ f^{-1}(\xi)\leq A(t) G(X_A^{\alpha_A}, X_B^{\alpha_B}\circ w)(\xi) f_\xi \circ f^{-1}(\xi),
	\end{split}
\end{equation}
for some positive function $A(t)$.

For the characteristic functions, we have
\begin{equation}\label{eqn:1Rel}
\begin{array}{rcl@{ }}
	\bbo_{\mA_{A,B}^{f,h}}\circ f^{-1} = \bbo_{\mA_{A,B}^{\id, w}},
	& \bbo_{\mB_{A,B}^{f,h}}\circ f^{-1} = \bbo_{\mB_{A,B}^{\id, w}},
\end{array}
\end{equation}
and 
\begin{equation}\label{eqn:1Rel2}
	\bbo_{\Omega_{A,B}^{f,h,c}} \circ f^{-1}
	= \bbo_{\Omega_{A,B}^{\id , w, c}},
\end{equation}
which follow from \eqref{eqn:SetRelAfh}, \eqref{eqn:SetRelBfh}, and \eqref{eqn:SetRelOmfh}.

For $g$ and $\hat g$, given by \eqref{eqn:g} and \eqref{eqn:ghat},
\begin{align}\label{eqn:gThm1}
	g(X_A^{\alpha_A}\circ f, X_B^{\alpha_B}\circ h) \circ f^{-1} &= |V_{A,\xi} - (V_B \circ w)_\xi| f_\xi \circ f^{-1}\\ \nonumber
	& = g(X_A^{\alpha_A}, X_B^{\alpha_B}\circ w)f_\xi\circ f^{-1},
\end{align}
\begin{equation}\label{eqn:gThm2}
\begin{split}
	\hat{g}(X_A^{\alpha_A}\circ f, X_B^{\alpha_B}\circ h) \circ f^{-1} 
	= \bigg[&
		|V_{A,\xi} - (V_B \circ w)_\xi|
		+ \|\alpha_A - \alpha_B\|_\infty \big(V_{A,\xi} \wedge (V_B\circ w)_\xi \big)\\
					&+ \|\alpha_{A,B}'\|_\infty \big(V_{A,\xi} \wedge (V_B\circ w)_\xi \big)\\
					&\quad\ \times
					\Big(
						|y_{A} - y_B\circ w| + |U_{A} -U_{B}\circ w|
					\Big)
				\bigg] f_\xi \circ f^{-1}\\
				& =\hat g(X_A^{\alpha_A}, X_B^{\alpha_B}\circ w)f_\xi\circ f^{-1}.
\end{split}
\end{equation}

For $\bar g$ given by  \eqref{eqn:gbar}, 
\begin{equation}\label{eqn:gThm3}
\begin{split}
	\bar{g}(X_A^{\alpha_A}\circ f, X_B^{\alpha_B}\circ h) \circ f^{-1} 
	= \bigg[&
	|V_{A,\xi} - (V_B \circ w)_\xi|\\
	&+ \big(V_{A,\xi} \wedge (V_B\circ w)_\xi \big) ((\alpha_A \circ f^{-1})\bbo_{\mA_A^c} + (\alpha_B\circ f^{-1})\bbo_{\mA_B^{w,c}})\\
	&+ \|\alpha_{A,B}'\|_\infty \big(V_{A,\xi} \wedge (V_B\circ w)_\xi \big)\\
	&\quad\ \times
		\Big(
		|y_A - f^{-1}|\bbo_{\mA_A^c} + |y_B\circ w - f^{-1}|\bbo_{\mA_B^{w,c}}\\
		&\quad\ \quad\
		+ (|U_A| + |U_B\circ w|)(\bbo_{\mA_A^c} + \bbo_{\mA_B^{w,c}}) 
		\Big)
	\bigg]f_\xi \circ f^{-1}\\
		\leq \bigg[&
			|V_{A,\xi} - (V_B \circ w)_\xi|\\
							 &+ \big(V_{A,\xi} \wedge (V_B\circ w)_\xi \big) \\
							 &\quad\ \times \big((\|\alpha_{A,B}'\|_\infty(|U_A|t + \frac{1}{4}M_{A,B}t^2) + \alpha_A)\bbo_{\mA_A^c} \\
							 &\quad\ \quad\ + (\|\alpha_{A,B}'\|_\infty(|U_A|t + \frac{1}{4}M_{A,B}t^2) + \alpha_B)\bbo_{\mA_B^{w,c}}\big)\\
							 &+ \|\alpha_{A,B}'\|_\infty \big(V_{A,\xi} \wedge (V_B\circ w)_\xi \big)\\
							 &\quad\ \times
							 \Big(
								 (|y_A - \id| + |U_A|t + \frac{1}{4}M_{A,B}t^2 )\bbo_{\mA_A^c} \\
							 &\quad\ \quad\ + (|y_B\circ w - \id| + |U_A| t + \frac{1}{4}M_{A,B}t^2)\bbo_{\mA_B^{w,c}}\\
							 &\quad\ \quad\
							 + (|U_A| + |U_B\circ w|)(\bbo_{\mA^c} + \bbo_{\mA_B^{w,c}}) 
						 \Big)
					 \bigg]f_\xi \circ f^{-1},\\
					&=\bar  g(X_A^{\alpha_A}, X_B^{\alpha_B}\circ w)f_\xi\circ f^{-1}\\
					& \quad\quad  +  2\|\alpha_{A,B}'\|_\infty \big(V_{A,\xi} \wedge (V_B\circ w)_\xi \big)\\
					& \quad\qquad \qquad  \quad \times (|U_A|t + \frac{1}{4}M_{A,B}t^2 )(\bbo_{\mA_A^c}+\bbo_{\mA_B^{w,c}})f_\xi\circ f^{-1} ,
		\end{split}
\end{equation}
where we used \eqref{eqn:finvODE}.
Finally, for the last term in $ G $, we apply the same strategy. We get
\begin{equation}\label{eqn:gThm4}
	\begin{split}
		&\bigg[
			\big(
				(V_A \circ f)_\xi \wedge (V_B\circ h)_\xi
			\big)(
			\|(V_A\circ f)_\xi\|_1 + \|(V_B \circ h)_\xi\|_1 + 1
			)(
			\bbo_{\mA_A^{f,c}} + \bbo_{\mA_B^{h,c}}
			) \bbo_{\mathcal{B}_{A,B}^{f,h}} 
		\bigg]\circ f^{-1}\\
		&= \big(
			V_{A,\xi} \wedge (V_B\circ w)_\xi
		\big)(
		\|V_{A,\xi}\|_1 + \|(V_B \circ w)_\xi\|_1 + 1
		)(
		\bbo_{\mA_A^{c}} + \bbo_{\mA_B^{w,c}}
		) \bbo_{\mathcal{B}_{A,B}^{\id,w}} 
		f_\xi \circ f^{-1},
	\end{split}
\end{equation}
where we have used substitution to deal with the $L^1(\R)$ terms present inside this term.

Thus, combining~\eqref{eqn:1Rel}, \eqref{eqn:1Rel2}, \eqref{eqn:gThm1}, \eqref{eqn:gThm2},
\eqref{eqn:gThm3}, and~\eqref{eqn:gThm4}, we find
\begin{equation*}
	G(X_A^{\alpha_A}\circ f, X_B^{\alpha_B} \circ h) \circ f^{-1} 
	\leq (1 + 2t +2M_{A,B}t^2)G(X_A^{\alpha_A}, X_B^{\alpha_B}\circ w,) f_\xi \circ f^{-1},
\end{equation*}
exactly as desired in~\eqref{ineq:toProveG}.
Taking the $L^1(\R)$ and $L^2(\R)$ norms, and with the substitution $\eta = f(\xi)$, we have
\begin{equation}\label{proof:eqn:JPiXAXB4}
	\|G(X_A^{\alpha_A}\circ f, X_B^{\alpha_B} \circ h)\|_1	\\
	\leq (1 + 2t + 2M_{A,B}t^2) \|G(X_A^{\alpha_A}, X_B^{\alpha_B}\circ w)\|_1
\end{equation}
and
\begin{equation}\label{proof:eqn:JPiXAXB5}
	\|G(X_A^{\alpha_A}\circ f, X_B^{\alpha_B} \circ h)\|_2  \\
	\leq (1 + 2t + 2M_{A,B}t^2) e^{\frac14 t}\|G(X_A^{\alpha_A}, X_B^{\alpha_B}\circ w)\|_2.
\end{equation}

Combining~\eqref{proof:eqn:JPiXAXB1}, \eqref{proof:eqn:JPiXAXB2}, \eqref{proof:eqn:JPiXAXB3},
\eqref{proof:eqn:JPiXAXB4}, and \eqref{proof:eqn:JPiXAXB5}, we have
\begin{align*}
	D(X_A^{\alpha_A}\circ f, &X_B^{\alpha_B} \circ h )\\
	&\leq (1+2t + 2M_{A,B}t^2)e^{\frac14 t}D(X_A^{\alpha_A}, X_B^{\alpha_B}\circ w) \\
	&\leq e^{2 \bar{M}_{A,B} t}e^{\frac14 t}D(X_A^{\alpha_A}, X_B^{\alpha_B}\circ w).
\end{align*}

\end{proof}

\section{Towards a metric}\label{sec:ToAMet}
We have two issues we strive to resolve in this section.
First, the mapping constructed in the previous section is not a metric,
but it is a semi-metric.
Second, Lagrangian coordinates that represent the same Eulerian coordinates,
i.e. lie in the same equivalence class, do not in general have a distance of zero. 
In other words, this is a semi-metric over the whole space of Lagrangian coordinates,
but not over the space of equivalence classes. In resolving the second issue, we resolve the first.

We begin with a helpful observation from the proof of Lemma~\ref{lem:ODEest} and \eqref{eqn:diffVleqG}.
\begin{prop}\label{prop:VinfVxi1}
	Let $X^{\alpha_A}_A$ and $X^{\alpha_B}_B$ be in $\mF$. Then
	\[
		\|V_A - V_B\|_\infty \leq \|V_{A,\xi} - V_{B,\xi}\|_1\leq \|G_{A,B}\|_1.
	\]
\end{prop}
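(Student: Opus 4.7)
The second inequality is immediate, since it is precisely the $i=1$ case of the estimate \eqref{eqn:diffVleqG} established in Proposition \ref{prop:G}. So the only thing to verify is the first inequality.

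My plan for the first inequality is to show that $V_A$ and $V_B$ each vanish at $-\infty$, and then apply the fundamental theorem of calculus. For any $X^\alpha \in \mF$, Definition \ref{defn:F} tells us $V \in E_1 = R_1(H_1)$, so we may write $V = f + a\chi^+$ with $f \in H^1(\R)$ and $a \in \R$. Since $\supp(\chi^+) \subset (-1,\infty)$, we have $\chi^+(\xi) = 0$ for $\xi \leq -1$, and by the Sobolev embedding $H^1(\R) \hookrightarrow C_0(\R)$ we have $f(\xi) \to 0$ as $\xi \to -\infty$. Hence $V(\xi) \to 0$ as $\xi \to -\infty$. Applied to both $V_A$ and $V_B$ this gives
\begin{equation*}
V_A(\xi) - V_B(\xi) = \int_{-\infty}^{\xi} \big(V_{A,\eta}(\eta) - V_{B,\eta}(\eta)\big)\, d\eta,
\end{equation*}
where the integral converges because $V_{A,\xi}, V_{B,\xi} \geq 0$ with $V_A, V_B \in L^\infty$ force $V_{A,\xi}, V_{B,\xi} \in L^1(\R)$.

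Taking absolute values under the integral and then the supremum in $\xi$ yields $\|V_A - V_B\|_\infty \leq \|V_{A,\xi} - V_{B,\xi}\|_1$, as required. Chaining this with the second inequality concludes the proof.

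I do not anticipate any real obstacle: the argument is essentially bookkeeping around the definition of $E_1$ and a direct appeal to Proposition \ref{prop:G}. The only subtle point worth emphasising is the decay $V(\xi) \to 0$ at $-\infty$, which is what allows one to replace the $L^\infty$ norm of a difference by an $L^1$ norm of its derivative without any additional constant.
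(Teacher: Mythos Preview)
Your proof is correct and follows essentially the same approach as the paper, which does not give a standalone argument but simply notes that the proposition is an ``observation from the proof of Lemma~\ref{lem:ODEest} and \eqref{eqn:diffVleqG}.'' Your justification that $V(\xi)\to 0$ as $\xi\to -\infty$ via the $E_1$ structure is a useful clarification that the paper leaves implicit.
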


Define $J:\mF \times \mF \to \R$ by 
\begin{equation}
	J(X_A^{\alpha_A}, X_B^{\alpha_B})
	= \inf_{f,g\in \mG} 
	\left(
		D(X_A^{\alpha_A}, X_B^{\alpha_B} \circ f)
		+ D(X_A^{\alpha_A} \circ g, X_B^{\alpha_B}) 
	\right).
\end{equation}

We begin by noting that $J$ is zero when measuring the distance between members of the same equivalence class.
Indeed, suppose that $X_A^{\alpha_A}$ and $X_B^{\alpha_B}$ in $\mF$ share the same equivalence class. 
That is, there exist $f_A$ and $f_B$ in $\mathcal{G}$ such that 
\begin{equation*}
	X_A^{\alpha_A} \circ f_A=X_B^{\alpha_B} 
	\quad \text{ and }\quad 
	X_B^{\alpha_B}\circ f_B=X_A^{\alpha_A}.
\end{equation*}
Then
\begin{equation*}
	D(X_A^{\alpha_A}, X_B^{ \alpha _B} \circ f_B) 
		+ D(X_A^{\alpha_A} \circ f_A, X_B^{ \alpha _B})
	= D(X_A^{ \alpha_A }, X_A^{ \alpha_A}) + D(X_B^{\alpha_B}, X_B^{\alpha_B})
	= 0,
\end{equation*}
and hence the infimum will be zero.

We will make use of a slight modification of a result that has already been established in \cite[Lemma 3.2]{MR3007728}.
\begin{lem}
	Let $X_A^{\alpha_A}$ and $X_B^{\alpha_B}$ be in $\mF_0$. 
Then, for any relabelling function $f \in \mathcal{G}$,
\[
	\|X_A^{\alpha_A} - X_B^{\alpha_B}\| \leq 5 \|X_A^{\alpha_A} \circ f - X_B^{\alpha_B}\|,
\]
where the norm $\|\cdot\|$ is given by,
\[
	\|X^\alpha\| = \|y-\id\|_\infty + \|U\|_\infty + \|H\|_\infty +\frac14  \|V\|_\infty+\|\alpha\|_\infty
	\quad \text{ for any } X^\alpha \in \mF.
\]
\end{lem}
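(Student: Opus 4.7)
The plan is to adapt the argument of \cite[Lemma 3.2]{MR3007728}, with the additional $\alpha$-term being trivial (since $\alpha$ is not acted upon by relabellings), and to exploit the $\mathcal{F}_0$-normalization $y+H=\id$ to obtain uniform Lipschitz control on all components of $X_A^{\alpha_A}$.

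Set $\epsilon := \|X_A^{\alpha_A}\circ f - X_B^{\alpha_B}\|$ and write $\epsilon_F := \|F_A\circ f - F_B\|_\infty$ for $F\in\{y,U,H,V\}$, together with $\epsilon_\alpha := \|\alpha_A-\alpha_B\|_\infty$, so that $\epsilon = \epsilon_y+\epsilon_U+\epsilon_H+\tfrac14\epsilon_V+\epsilon_\alpha$. The first step is to control $\|f-\id\|_\infty$: since both tuples lie in $\mathcal{F}_0$, we have $y_A\circ f + H_A\circ f = f$ and $y_B + H_B = \id$, and subtracting yields $f - \id = (y_A\circ f - y_B) + (H_A\circ f - H_B)$, hence $\|f-\id\|_\infty \leq \epsilon_y+\epsilon_H \leq \epsilon$.

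The second step is to observe that every component of $X_A$ is $1$-Lipschitz. Differentiating $y+H=\id$ together with the non-negativity constraints yields $0\leq y_{A,\xi},H_{A,\xi}\leq 1$; combined with $V_\xi\leq H_\xi$ this gives $|V_{A,\xi}|\leq 1$, and the identity $U_\xi^2=y_\xi V_\xi$ then gives $|U_{A,\xi}|\leq 1$. Consequently $\|F_A - F_A\circ f\|_\infty \leq \|f-\id\|_\infty \leq \epsilon$ for each $F\in\{y,H,U,V\}$.

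The final step inserts $X_A^{\alpha_A}\circ f$ via the triangle inequality in each coordinate: for $F\in\{y,U,H\}$ one obtains $\|F_A - F_B\|_\infty \leq \epsilon+\epsilon_F$, and similarly $\tfrac14\|V_A-V_B\|_\infty \leq \tfrac14\epsilon+\tfrac14\epsilon_V$, while the $\alpha$-term is unchanged. Summing,
\[
 \|X_A^{\alpha_A}-X_B^{\alpha_B}\| \leq \tfrac{13}{4}\epsilon + \bigl(\epsilon_y+\epsilon_U+\epsilon_H+\tfrac14\epsilon_V+\epsilon_\alpha\bigr) = \tfrac{17}{4}\epsilon \leq 5\epsilon.
\]
No step is a genuine obstacle: the $\mathcal{F}_0$ normalization is precisely what makes the Lagrangian components uniformly $1$-Lipschitz, after which the statement reduces to a bookkeeping exercise with triangle inequalities, with the constant $5$ coming out with room to spare.
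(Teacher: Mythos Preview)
Your proof is correct and follows the same approach that the paper implicitly relies on: the paper does not prove this lemma but cites it as a slight modification of \cite[Lemma 3.2]{MR3007728}, and your argument is precisely that adaptation. The key steps---using the $\mF_0$ normalization $y+H=\id$ to bound $\|f-\id\|_\infty$ and to make all components $1$-Lipschitz, then applying triangle inequalities---are exactly what the cited result does, with the $\alpha$-term trivially passing through since relabellings do not touch it.
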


Hence, we have
\begin{equation}\label{eqn:XAXB1}
	2\|X_A ^{\alpha_A}- X_B^{\alpha_B}\| \leq 5 \|X_A^{\alpha_A} \circ f - X_B^{\alpha_B}\| + 5 \|X_A^{\alpha_A} - X_B^{\alpha_B} \circ g\|.
\end{equation}
Using Proposition \ref{prop:VinfVxi1}, we have
\begin{equation}\label{eqn:XAcFXB}
\begin{split}
	\|X_A^{\alpha_A} \circ f - X_B^{\alpha_B}\| 
	&= \|y_A \circ f - y_B\|_\infty + \|U_A \circ f - U_B\|_\infty\\
	&\quad\ +\|H_A \circ f - H_B\|_\infty +\frac14 \|V_A \circ f - V_B\|_\infty+\|\alpha_A-\alpha_B\|_\infty \\
	&\leq \|y_A \circ f - y_B\|_\infty + \|U_A \circ f - U_B\|_\infty\\
	&\quad\ +\|H_A \circ f - H_B\|_\infty + \frac14 \|G(X_A^{\alpha_A} \circ f, X_B^{\alpha_B})\|_1+\|\alpha_A-\alpha_B\|_\infty\\
	&\leq D(X_A^{\alpha_A} \circ f, X_B^{\alpha_B}).
\end{split}
\end{equation}
Thus, substituting this inequality into \eqref{eqn:XAXB1}, we see
\begin{equation}\label{ineq:XAXBinf}
	\|X_A^{\alpha_A} - X_B^{\alpha_B}\| 
	\leq \frac{5}{2}
	\big( 
		D(X_A^{\alpha_A} \circ f, X_B^{\alpha_B}) 
		+ D(X_A^{\alpha_A} , X_B^{\alpha_B} \circ g) 
	\big)
\end{equation}
and after taking the infimum over all $f,g \in \mG$, we have the following.
\begin{cor}\label{cor:ineq:XnrmJ}
	Let $X_A^{\alpha_A}$ and $X_B^{\alpha_B}$ be in $\mF_0$. Then
\[
	\|X_A^{\alpha_A} - X_B^{\alpha_B}\|
	\leq \frac{5}{2}J(X_A^{\alpha_A}, X_B^{\alpha_B}).
\]
Thus the restriction of $J$ to $\mF_0 \times \mF_0$ is a semi-metric.
\end{cor}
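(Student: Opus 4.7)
My plan is to note that the substantive work has already been done in the derivation of \eqref{ineq:XAXBinf}. Since the left-hand side $\|X_A^{\alpha_A} - X_B^{\alpha_B}\|$ of that inequality does not depend on the relabelling functions $f$ and $g$, I would simply take the infimum of the right-hand side over all $f, g \in \mG$ to obtain
\[
	\|X_A^{\alpha_A} - X_B^{\alpha_B}\|
	\leq \tfrac{5}{2} \inf_{f, g \in \mG}\big(D(X_A^{\alpha_A} \circ f, X_B^{\alpha_B}) + D(X_A^{\alpha_A}, X_B^{\alpha_B} \circ g)\big)
	= \tfrac{5}{2}\, J(X_A^{\alpha_A}, X_B^{\alpha_B}),
\]
after renaming the dummy variables in the definition of $J$. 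This completes the first assertion.

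For the second assertion, that $J$ restricted to $\mF_0 \times \mF_0$ is a semi-metric (in the paper's sense of a metric possibly lacking the triangle inequality), I would verify the three remaining properties. Non-negativity is immediate from $D \geq 0$. Symmetry follows by interchanging the roles of $f$ and $g$ in the infimum together with the symmetry of $D$ in its two arguments; for this, one reads off the defining formula \eqref{eqn:D} and observes that each ingredient, including the quantity $G_{A,B}$ built from $g_{A,B}$, $\hat g_{A,B}$, $\bar g_{A,B}$ in \eqref{eqn:g}--\eqref{eqn:G} and the sets $\mA_{A,B}$, $\mB_{A,B}$, $\Omega_{A,B}$ in \eqref{defn:normSets}, is symmetric in the labels $A$ and $B$. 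For the identity property, taking $f = g = \id$ in the infimum gives $J(X^\alpha, X^\alpha) = 0$, and conversely if $J(X_A^{\alpha_A}, X_B^{\alpha_B}) = 0$ then the inequality just established yields $\|X_A^{\alpha_A} - X_B^{\alpha_B}\| = 0$, which by inspection of the norm $\|\cdot\|$ forces equality of all components, hence $X_A^{\alpha_A} = X_B^{\alpha_B}$ in $\mF_0$.

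I do not anticipate any real obstacle here; the statement is essentially a packaging of the estimate \eqref{ineq:XAXBinf}. The only mild care point is to confirm the symmetry of $D$ before concluding the symmetry of $J$, since one otherwise might worry that swapping the roles of the two arguments gives a genuinely different quantity; this check is routine from the definitions, but it is the only step that is not a direct invocation of \eqref{ineq:XAXBinf}.
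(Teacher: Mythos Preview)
Your proposal is correct and matches the paper's approach: the paper states the corollary immediately after \eqref{ineq:XAXBinf} with the remark ``after taking the infimum over all $f,g\in\mG$, we have the following,'' which is precisely your first step. Your verification of the semi-metric axioms for $J$ is more explicit than the paper's treatment (the paper relies on the earlier Note that $D$ itself is a semi-metric), but the substance is the same.
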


Using this semi-metric we are able to construct a metric on the more restricted set $\mF_M^{L}$,
given by~\eqref{eqn:FM}. For $M$, $L>0$,
introduce $\hat{d}:\mF_M^{L} \times \mF_M^{L} \to \R$, defined by
\begin{equation}\label{eqn:lagMet}
	\hat{d}(X_A^{\alpha_A}, X_B^{\alpha_B}) 
	\coloneqq 
	\inf_{ \hat{\mF}\left(X_A^{\alpha_A}, X_B^{\alpha_B}\right) } 
	\sum_{n=1}^N J(X_n^{\alpha_n}, X_{n-1}^{\alpha_{n-1}}),
\end{equation}
where $ \hat{\mF}\left(X_A^{\alpha_A}, X_B^{\alpha_B}\right) $ is the set of finite sequences 
$\{X_n^{\alpha_n}\}_{n=0}^N$ of arbitrary length in $\mF_{0,M}^{L}$,
satisfying $X_0^{\alpha_0} = \Pi X_A^{\alpha_A}$
and $X_N^{\alpha_N} = \Pi X_B^{\alpha_B}$.

\begin{note}
	Let $X_A^{\alpha_A}$, $X_B^{\alpha_B} \in \mF_M^L$. Then, directly from the definition we have
\[
	\hat{d}(X_A^{\alpha_A}, X_B^{\alpha_B}) 
	= \hat{d}(\Pi X_A^{\alpha_A}, \Pi X_B^{\alpha_B}).
\]
\end{note}

\begin{note}
	$\hat{d}$ inherits from $J$ that if both $X_A^{\alpha_A}$ and $X_B^{\alpha_B}$ are in the same equivalence
	class, then $\hat{d}(X_A^{\alpha_A}, X_B^{\alpha_B})$ is zero. Indeed, consider the
	finite sequence $X_0^{\alpha_0} = \Pi X_A^{\alpha_A}$
	and $X_1^{\alpha_1} = \Pi X_B^{\alpha_B} = \Pi X_A^{\alpha_A}$.
\end{note}

It remains to ensure that $\hat{d}$ satisfies the identity of indiscernibles. That is we need to prove the implication 
\begin{equation}\label{eqn:J0Equiv}
	\hat{d}(X_A^{\alpha_A}, X_B^{\alpha_B}) = 0 
	\implies X_A^{\alpha_A} \sim X_B^{\alpha_B} ,
\end{equation}
meaning if the distance between the two elements is zero,
then both Lagrangian coordinates lie in the same equivalence class.

Using Corollary~\ref{cor:ineq:XnrmJ} and Lemma~\ref{lem:genMetric}, with $F = J$,we get the following result, which confirms~\eqref{eqn:J0Equiv}.
\begin{cor}\label{cor:disMet}
The function $ \hat{d}:\mF_M^{L} \times \mF_M^{L} \to \R $ defined by \eqref{eqn:lagMet} is a metric. Furthermore, for any $X_A^{\alpha_A}, X_B^{\alpha_B} \in \mF_{0,M}^{L}$ it satisfies
\[
	\frac{2}{5}
		\|X_A^{\alpha_A} - X_B^{\alpha_B}\| 
	\leq \hat{d}(X_A^{\alpha_A}, X_B^{\alpha_B}) 
	\leq J(X_A^{\alpha_A}, X_B^{\alpha_B}).
\]
\end{cor}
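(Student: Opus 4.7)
The plan is to invoke Lemma~\ref{lem:genMetric} with $F = J$ in order to upgrade the semi-metric $J$ to a genuine metric $\hat{d}$, and then to establish the two quantitative bounds directly from the definition of the chain-infimum. The symmetry of $J$ is immediate from its definition, $J$ vanishes on pairs representing the same equivalence class as noted right after $J$ was introduced, and Corollary~\ref{cor:ineq:XnrmJ} supplies the crucial lower domination $\|X_A^{\alpha_A} - X_B^{\alpha_B}\| \leq \tfrac{5}{2} J(X_A^{\alpha_A}, X_B^{\alpha_B})$ that the abstract lemma requires. These three ingredients are precisely what makes the chain-infimum construction produce a metric rather than just a pseudometric.

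For the upper bound, when $X_A^{\alpha_A}, X_B^{\alpha_B} \in \mF_{0,M}^{L}$ the projection $\Pi$ acts as the identity, so $\Pi X_A^{\alpha_A} = X_A^{\alpha_A}$ and $\Pi X_B^{\alpha_B} = X_B^{\alpha_B}$. The two-term sequence $X_0^{\alpha_0} = X_A^{\alpha_A}$, $X_1^{\alpha_1} = X_B^{\alpha_B}$ then sits in $\hat{\mF}(X_A^{\alpha_A}, X_B^{\alpha_B})$, and since the infimum is bounded by the value on any particular chain, we immediately get $\hat{d}(X_A^{\alpha_A}, X_B^{\alpha_B}) \leq J(X_A^{\alpha_A}, X_B^{\alpha_B})$.

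For the lower bound, pick an arbitrary admissible chain $\{X_n^{\alpha_n}\}_{n=0}^N$ in $\mF_{0,M}^{L}$ joining $X_A^{\alpha_A}$ to $X_B^{\alpha_B}$. The triangle inequality for the norm $\|\cdot\|$ combined with Corollary~\ref{cor:ineq:XnrmJ} applied edge by edge yields
\[
\|X_A^{\alpha_A} - X_B^{\alpha_B}\|
\leq \sum_{n=1}^N \|X_n^{\alpha_n} - X_{n-1}^{\alpha_{n-1}}\|
\leq \tfrac{5}{2} \sum_{n=1}^N J(X_n^{\alpha_n}, X_{n-1}^{\alpha_{n-1}}).
\]
Taking the infimum over all admissible chains produces $\tfrac{2}{5}\|X_A^{\alpha_A} - X_B^{\alpha_B}\| \leq \hat{d}(X_A^{\alpha_A}, X_B^{\alpha_B})$, completing the sandwich. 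As a by-product, this lower bound gives the identity of indiscernibles \eqref{eqn:J0Equiv}: if $\hat{d}(X_A^{\alpha_A}, X_B^{\alpha_B}) = 0$, then $\|X_A^{\alpha_A} - X_B^{\alpha_B}\| = 0$ in $\mF_0$, in which case $X_A^{\alpha_A}$ and $X_B^{\alpha_B}$ coincide and are trivially equivalent.

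The main subtlety, beyond correctly packaging the above into the abstract framework of Lemma~\ref{lem:genMetric}, is checking that the restriction of the chain to $\mF_{0,M}^{L}$ is compatible with the construction. Since $\Pi$ preserves both $\|V\|_\infty$ and $\|\alpha'\|_\infty$, the projected endpoints lie in $\mF_{0,M}^{L}$ whenever the originals are in $\mF_M^{L}$, so the infimum set $\hat{\mF}(X_A^{\alpha_A}, X_B^{\alpha_B})$ is nonempty. The triangle inequality for $\hat{d}$ itself is automatic from concatenation of chains, and symmetry and $\hat{d}(X^\alpha, X^\alpha) = 0$ are built into the definition, so no further estimates are needed.
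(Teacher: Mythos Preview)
Your proposal is correct and follows essentially the same approach as the paper: both invoke Lemma~\ref{lem:genMetric} with $F=J$, using Corollary~\ref{cor:ineq:XnrmJ} to supply the domination inequality~\eqref{lem:ineq:Fineq}. The paper states this in one line, whereas you have spelled out the chain argument and the role of $\Pi$ in a bit more detail, but the substance is identical.
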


The following lemma will form the bridge that allows us to use the Lipschitz
stability estimate we have obtained for $D$ to
prove Lipschitz stability with respect to $\hat{d}$.

\begin{lem}\label{lem:JPiEst}
Let $X_A^{\alpha_A}$ and $X_B^{\alpha_B}$ be two $\alpha$-dissipative solutions with initial
data $X_{0,A}^{\alpha_A}$ and $X_{0,B}^{\alpha_B}$ in $\mF_0$, respectively.
Then
\[
	J(\Pi X_A^{\alpha_A}(t), \Pi X_B^{\alpha_B}(t))
	\leq e^{ (4\bar{M}_{A,B} + \hlf) t} J(X_A^{\alpha_A}(t), X_B^{\alpha_B}(t)),
\]
where $\bar{M}_{A,B}=M_{A,B}\vee1$.
\end{lem}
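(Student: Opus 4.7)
My plan is to exploit the fact that the infimum defining $J$ separates into two one-variable infima, reducing the task to two applications of Lemma~\ref{lem:JPiXAXB}. Since $D(X_A^{\alpha_A}, X_B^{\alpha_B} \circ f)$ depends only on $f$ and $D(X_A^{\alpha_A} \circ g, X_B^{\alpha_B})$ only on $g$, the joint infimum decouples as
\[
	J(X_A^{\alpha_A}, X_B^{\alpha_B})
	= \inf_{f \in \mG} D(X_A^{\alpha_A}, X_B^{\alpha_B} \circ f)
	+ \inf_{g \in \mG} D(X_A^{\alpha_A} \circ g, X_B^{\alpha_B}),
\]
so it suffices to bound each of the two infima.

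I would let $f_A, f_B \in \mG$ be the unique relabelings with $\Pi X_A^{\alpha_A}(t) = X_A^{\alpha_A}(t) \circ f_A$ and $\Pi X_B^{\alpha_B}(t) = X_B^{\alpha_B}(t) \circ f_B$, namely $f_i^{-1} = (y_i + H_i)(\cdot, t)$ for $i = A, B$. For the first infimum, rewrite $\Pi X_B^{\alpha_B}(t) \circ f = X_B^{\alpha_B}(t) \circ (f_B \circ f)$ and apply Lemma~\ref{lem:JPiXAXB} with $h = f_B \circ f$ to obtain
\[
	D(\Pi X_A^{\alpha_A}(t), \Pi X_B^{\alpha_B}(t) \circ f)
	\leq e^{(2\bar{M}_{A,B} + \tfrac{1}{4})t} D(X_A^{\alpha_A}(t), X_B^{\alpha_B}(t) \circ w),
\]
where $w = f_B \circ f \circ f_A^{-1}$. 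Because $\mG$ is a group and $f_A, f_B$ are fixed, the map $f \mapsto w$ is a bijection on $\mG$, and so taking the infimum over $f$ yields the required bound on the first piece.

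For the second infimum I would exploit the symmetry of $D$ in its two arguments (which holds because $G_{A,B}$ and the constituent norms are symmetric in the labels $A, B$) together with the hypothesis $X_{0,B}^{\alpha_B} \in \mF_0$, which allows the proof of Lemma~\ref{lem:JPiXAXB} to be run with the labels interchanged. Concretely,
\[
	D(\Pi X_A^{\alpha_A}(t) \circ g, \Pi X_B^{\alpha_B}(t))
	= D(\Pi X_B^{\alpha_B}(t), X_A^{\alpha_A}(t) \circ (f_A \circ g))
\]
by symmetry, and the swapped lemma with $h = f_A \circ g$, followed by another use of symmetry, gives an analogous estimate in which $w' = f_A \circ g \circ f_B^{-1}$ ranges over $\mG$ as $g$ does. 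Summing the two pieces and using $2\bar{M}_{A,B} + \tfrac{1}{4} \leq 4\bar{M}_{A,B} + \tfrac{1}{2}$ (valid since $\bar M_{A,B} \geq 0$) produces the desired estimate.

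The principal care required is the change-of-variables bookkeeping $f \mapsto w$ and $g \mapsto w'$ under the infima, and verifying that Lemma~\ref{lem:JPiXAXB} is genuinely symmetric in $A$ and $B$; the latter is the point where the assumption that both initial data lie in $\mF_0$ is used. Beyond this there is no new analytic content: everything of substance was already established in Lemma~\ref{lem:JPiXAXB}.
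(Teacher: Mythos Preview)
Your argument is correct, and in fact sharper than the paper's. The paper proceeds in two stages: first it removes the $\Pi$ from $X_A$ to pass from $J(\Pi X_A,\Pi X_B)$ to $J(X_A,\Pi X_B)$, then swaps roles to remove the $\Pi$ from $X_B$, each stage invoking Lemma~\ref{lem:JPiXAXB} once and costing a factor $e^{(2\bar M_{A,B}+\tfrac14)t}$; the two factors compound multiplicatively to give the stated exponent $4\bar M_{A,B}+\tfrac12$. You instead use the decoupling $J=\inf_f D(\cdot,\cdot\circ f)+\inf_g D(\cdot\circ g,\cdot)$ and, in each summand, strip both $\Pi$'s simultaneously with a single application of Lemma~\ref{lem:JPiXAXB} (using the symmetry of $D$ and the group bijection $f\mapsto f_B\circ f\circ f_A^{-1}$). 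Because the two summands add rather than multiply, your route actually delivers the stronger bound $J(\Pi X_A,\Pi X_B)\le e^{(2\bar M_{A,B}+\tfrac14)t}J(X_A,X_B)$, which you then relax to the stated constant. The only points to verify are exactly those you flag: that $D$ is symmetric (it is, since every ingredient of $G_{A,B}$ is symmetric in $A,B$ and the paper records $D$ as a semi-metric), and that Lemma~\ref{lem:JPiXAXB} applies with $A,B$ interchanged, which holds because both initial data are assumed in $\mF_0$.
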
 

\begin{proof}
To ease digestion, we drop $\alpha$ and $t$ in the notation for this proof.
Furthermore, we set $C \coloneqq 2 \bar{M}_{A,B} + \frac{1}{4}$ and for $i=A$, $B$, let $f_i\in \mG$ such that $\Pi X_i^{\alpha_i}=X_i^{\alpha_i}\circ f_i$.

From Lemma~\ref{lem:JPiXAXB}, we have,
\begin{equation}\label{eqn:Jprf1}
\begin{split}
	J(\Pi X_A, \Pi X_B) &
	= \inf_{f_1,f_2}(D(X_A\circ f_A \circ f_1,\Pi X_B) + D(X_A\circ f_A, (\Pi X_B) \circ f_2))\\
	&\leq \inf_{f_1,f_2}(D(X_A\circ f_1, \Pi X_B) + e^{C t} D(X_A, (\Pi X_B) \circ f_2 \circ f_A^{-1} ))\\
	&\leq e^{C t} \inf_{f_1,f_2}(D(X_A\circ f_1, \Pi X_B) + D(X_A, (\Pi X_B) \circ f_2 ))\\
	&= e^{C t} J(X_A, \Pi X_B),
\end{split}
\end{equation}
where we are using that $f_A \circ f_1$ lies in $\mG$ for any $f_1\in \mG$ and that any element $f\in\mG$ can be written as $f=f_A\circ g$ for some $g\in \mG$, which implies that $g=f_A^{-1}\circ f$.

We can then do the same again, but now we swap the roles of $X_B$ and $X_A$,
\begin{equation}\label{eqn:Jprf2}
\begin{split}
	J(X_A, \Pi X_B) &
	= \inf_{f_1,f_2}(D(X_A\circ f_1, X_B \circ f_B) + D(X_A,  X_B \circ f_B \circ f_2))\\
	&\leq \inf_{f_1,f_2}(e^{C t}D(X_A \circ f_1 \circ f_B^{-1}, X_B) + D(X_A, X_B \circ f_2 ))\\
	&\leq e^{C t} \inf_{f_1,f_2}(D(X_A\circ f_1, X_B) + D(X_A, X_B \circ f_2 ))\\
	&= e^{C t} J(X_A, X_B).
\end{split}
\end{equation}

Substituting \eqref{eqn:Jprf2} into \eqref{eqn:Jprf1}, we obtain the required result.
\end{proof}

Thus we can now show our Lipschitz stability result.
\begin{thm}\label{thm:LagStab}
Let $X_A^{\alpha_A}$ and $X_B^{\alpha_B}$ be two $\alpha$-dissipative solutions with initial
data $X_{0,A}^{\alpha_A}$ and $X_{0,B}^{\alpha_B}$ in $\mF_{0,M}^{L}$, respectively.
Then
\begin{equation}
	\hat{d}(X_A^{\alpha_A}(t), X_B^{\alpha_B}(t))
	\leq e^{R_M^{L} t}\hat{d}(X_{0,A}^{\alpha_A}, X_{0,B}^{\alpha_B}),
\end{equation}
where
\begin{equation}\label{const:RM}
	R_M^{L} \coloneqq 4\bar{M} +\frac52 + \frac{1}{4}L(M +2 \sqrt{M}),
\end{equation}
with $\bar{M} = M \vee 1$. 
\end{thm}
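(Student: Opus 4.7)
The plan is to lift a near-optimal chain for $\hat{d}(X_{0,A}^{\alpha_A}, X_{0,B}^{\alpha_B})$ forward in time using the Lagrangian flow $S_t$, project it back into $\mF_0$ by $\Pi$, and estimate each link using the three results already available: Lemma~\ref{lem:Stof} (the flow commutes with relabelling), Lemma~\ref{lem:lipD} (Lipschitz stability of $D$), and Lemma~\ref{lem:JPiEst} (cost of applying $\Pi$ inside $J$).

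Fix $\varepsilon > 0$ and choose a chain $\{X_n^{\alpha_n}\}_{n=0}^N \subset \mF_{0,M}^{L}$ with $X_0^{\alpha_0} = \Pi X_{0,A}^{\alpha_A}$, $X_N^{\alpha_N} = \Pi X_{0,B}^{\alpha_B}$, and
\[
    \sum_{n=1}^N J(X_n^{\alpha_n}, X_{n-1}^{\alpha_{n-1}}) \leq \hat{d}(X_{0,A}^{\alpha_A}, X_{0,B}^{\alpha_B}) + \varepsilon.
\]
Set $Y_n^{\alpha_n} := \Pi S_t X_n^{\alpha_n}$. I claim this is a valid chain in $\mF_{0,M}^{L}$ joining $\Pi X_A^{\alpha_A}(t)$ and $\Pi X_B^{\alpha_B}(t)$: each $\alpha_n$ is time-independent, so $\|\alpha_n'\|_\infty \leq L$ is preserved; $\|V_n(\cdot,t)\|_\infty$ is non-increasing under $S_t$ by \eqref{eqn:LagSys4} and invariant under composition by a homeomorphism, so $\|V_n(\cdot,t)\|_\infty \leq M$; and since $\Pi$ depends only on the equivalence class, Lemma~\ref{lem:Stof} yields $\Pi S_t X_0^{\alpha_0} = \Pi S_t \Pi X_{0,A}^{\alpha_A} = \Pi X_A^{\alpha_A}(t)$, and likewise at the right endpoint.

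For the single-link estimate, Lemma~\ref{lem:JPiEst} gives
\[
    J(\Pi S_t X_n^{\alpha_n}, \Pi S_t X_{n-1}^{\alpha_{n-1}}) \leq e^{(4\bar{M} + 1/2)\, t} J(S_t X_n^{\alpha_n}, S_t X_{n-1}^{\alpha_{n-1}}),
\]
since $\bar{M}_{n,n-1} \leq \bar{M}$. For any $f \in \mG$, Lemma~\ref{lem:Stof} rewrites $S_t X_{n-1}^{\alpha_{n-1}} \circ f = S_t(X_{n-1}^{\alpha_{n-1}} \circ f)$, and because $X_n^{\alpha_n} \in \mF_0$ with $\|V_{n-1}\circ f\|_\infty = \|V_{n-1}\|_\infty \leq M$ and $\|\alpha'_{n,n-1}\|_\infty \leq L$, Lemma~\ref{lem:lipD} yields
\[
    D(S_t X_n^{\alpha_n}, S_t(X_{n-1}^{\alpha_{n-1}} \circ f)) \leq e^{(2 + \frac{1}{4}L(M + 2\sqrt{M}))\, t} D(X_n^{\alpha_n}, X_{n-1}^{\alpha_{n-1}} \circ f).
\]
The companion term $D(S_t(X_n^{\alpha_n} \circ g), S_t X_{n-1}^{\alpha_{n-1}})$ is handled identically after exploiting the manifest symmetry of $D$ in its two arguments (now $X_{n-1}^{\alpha_{n-1}} \in \mF_0$ plays the role of the first slot). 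Taking the infimum over $f, g \in \mG$ and multiplying the two exponential factors produces the per-link bound $J(\Pi S_t X_n, \Pi S_t X_{n-1}) \leq e^{R_M^L t} J(X_n, X_{n-1})$.

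Summing over $n$, using $\hat{d}(X_A^{\alpha_A}(t), X_B^{\alpha_B}(t)) = \hat{d}(\Pi X_A^{\alpha_A}(t), \Pi X_B^{\alpha_B}(t)) \leq \sum_{n=1}^N J(Y_n^{\alpha_n}, Y_{n-1}^{\alpha_{n-1}})$, and letting $\varepsilon \to 0$ finishes the proof. The only substantive checks are the in-set verifications---that $\|V\|_\infty$, $\|\alpha'\|_\infty$, and $\bar{M}$ all survive both the flow and composition by an arbitrary $f \in \mG$; once these are established the three exponential factors add cleanly to $R_M^L = 4\bar{M} + \tfrac{5}{2} + \tfrac{1}{4}L(M + 2\sqrt{M})$.
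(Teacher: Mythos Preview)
Your proof is correct and follows essentially the same route as the paper: approximate $\hat d$ at time $0$ by a finite chain in $\mF_{0,M}^L$, push each node forward by $S_t$ and project by $\Pi$, then combine Lemma~\ref{lem:JPiEst} and Lemma~\ref{lem:lipD} (via Lemma~\ref{lem:Stof}) on each link to obtain the factor $e^{R_M^L t}$. If anything, you are more explicit than the paper about two points it leaves implicit: that the evolved-and-projected chain still lies in $\mF_{0,M}^L$, and that the symmetry of $D$ is what lets Lemma~\ref{lem:lipD} (whose hypothesis singles out the first slot as lying in $\mF_0$) be applied to the companion term $D(S_t(X_n\circ g), S_t X_{n-1})$.
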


\begin{proof}
Let $\eps > 0$. Consider a finite sequence 
$ \{ X_{0,n}^{\alpha_n} \}_{n=0}^N \in \hat{\mF}(X_{0,A}^{\alpha_A}, X_{0,B}^{\alpha_B}) $ 
and a sequence of relabelling functions $ \{f_n\}_{n=0}^{N-1}, \{g_n\}_{n=1}^{N}$ in $\mG$
such that
\begin{equation*}
	\sum_{n=1}^N 
	\big(
		D(X_{0,n}^{\alpha_n}, X_{0,n-1}^{\alpha_{n-1}} \circ f_{n-1}) 
		+ D(X_{0,n}^{\alpha_n} \circ g_n, X_{0,n-1}^{\alpha_{n-1}})
	\big)
	< \hat{d}(X_{0,A}^{\alpha_A}, X_{0,B}^{\alpha_B}) + \eps.
\end{equation*}
Set $X_n^{\alpha_n}(t) = S_t X_{0,n}^{\alpha_n}$. Then, by Lemma~\ref{lem:Stof},
$X_A^{\alpha_A}(t) = S_t X_{0,0}^{\alpha_0}$, and
$X_B^{\alpha_B}(t) = S_t X_{0,N}^{\alpha_N}$.
Furthermore, $X_n^{\alpha_n}(t)\in \mF_M^L$ for all $t\geq 0$ and all $n$.
Thus, using Lemmas~\ref{lem:Stof}, \ref{lem:lipD} and~\ref{lem:JPiEst},
\begin{align*}
	\hat{d}(X_A^{\alpha_A}(t), X_B^{\alpha_B}(t)) 
	&\leq \sum_{n=1}^N J(\Pi X_n^{\alpha_n}(t), \Pi X_{n-1}^{\alpha_{n-1}}(t))\\
	&\leq e^{(4\bar M+\frac12) t} \sum_{n=1}^N J(X_n^{\alpha_n}(t), X_{n-1}^{\alpha_{n-1}}(t))\\
	&\leq e^{(4\bar M+\frac12) t} \sum_{n=1}^{N} \big( D(X_n^{\alpha_n}(t), X_{n-1}^{\alpha_{n-1}}(t)\circ f_{n-1}) \\
	&\hphantom{\leq e^{(4\bar M + \hlf) t} \sum_{n=1}^{N} \big(}
		+ D(X_n^{\alpha_n}(t) \circ g_n, X_{n-1}^{\alpha_{n-1}}(t)) \big)\\
	&\leq e^{(4\bar M+\frac12) t} \sum_{n=1}^{N} e^{(2+\frac14 L(M+2\sqrt{M}))t} \big( D(X_n^{\alpha_n}(0), X_{n-1}^{\alpha_{n-1}}(0)\circ f_{n-1}) \\
	&\qquad\quad  \qquad \hphantom{\leq e^{(4M + \hlf) t} e^{C_M t} \sum_{n=1}^{N} \big(}
		+ D(X_n^{\alpha_n}(0) \circ g_n, X_{n-1}^{\alpha_{n-1}}(0)) \big)\\
	&\leq e^{(4\bar M+\frac52+\frac14 L(M+2\sqrt{M}))t} ( \hat{d}(X_{0,A}^{\alpha_A}, X_{0,B}^{\alpha_B}) + \eps.).
\end{align*}
The final result follows, as this inequality is true for any $ \eps > 0 $.
\end{proof}

\subsection{A simplification in the case $\alpha$ is a constant}\label{subsec:ac}
In the case where $\alpha \in [0, 1] \subset \Lambda$, i.e. $\alpha$ is a constant,
the construction can be simplified.

First, define the subset of $\mF$ containing elements for which $\alpha$ is constant,
\[
	\mF_c \coloneqq \{X^{\alpha} \in \mF \mid \alpha \in [0, 1]\}.
\]

For $X^{\alpha} \in \mF_c$, we introduce the two functions
\[
	V_\xi^d(\xi, t) \coloneqq \alpha V_\xi(\xi, t) \bbo_{\mA^c(t)}(\xi),
	\quad\
	V_\xi^c(\xi, t) \coloneqq (1 - \alpha \bbo_{\mA^c(t)}(\xi)) V_\xi(\xi, t).
\]
The second function $V_\xi^c$ is in fact constant, so the time dependence can be dropped.
Note also $V_\xi(\xi, t) = V_\xi^c(\xi) + V_\xi^d(\xi, t)$.

Using this, we can introduce a simpler function $G:\mF_c\times\mF_c \to [0, +\infty)$, given by
\begin{align*}
	G_{A,B}(\xi) 
	&= G\left(X_A^{\alpha_A}, X_B^{\alpha_B}\right)(\xi)\\
	&= |V_{A,\xi}(\xi) - V_{B,\xi}(\xi)| \bbo_{\mA_{A,B}}(\xi)\\
	&\quad\ +
		\left(|V_{A,\xi}^c(\xi) - V_{B,\xi}^c(\xi)| + |V_{A,\xi}^d(\xi) - V_{B,\xi}^d(\xi)| \right)\bbo_{\mB_{A,B}}(\xi)\\
	&\quad\ +
		\left(|V_{A,\xi}^c(\xi) - V_{B,\xi}^c(\xi)| + V_{A,\xi}^d(\xi) \vee V_{B,\xi}^d(\xi) \right)\bbo_{\Omega_{A,B}^c}(\xi),
\end{align*}
for any $ X^{\alpha_A}_A$, $X^{\alpha_B}_B \in \mF^c $.
It satisfies
\[
	|V_{A,\xi}(\xi) - V_{B,\xi}(\xi)| \leq |G_{A,B}(\xi)|.
\]

We can then define a metric $D: \mF_c\times \mF_c \to \R$ by
\begin{equation}\label{eqn:DalphConst}
\begin{split}
	D(X_A^{\alpha_A}, X_B^{\alpha_B})
	&\coloneqq \|y_A - y_B\|_\infty + \|U_A - U_B\|_\infty + \|H_A - H_B\|_\infty\\
		&\quad\ + \|y_{A,\xi} - y_{B,\xi}\|_2 + \|U_{A,\xi} - U_{B, \xi}\|_2\\
		&\quad\ + \frac{1}{4}\|G_{A,B}\|_1 + \frac{1}{2}\|G_{A,B}\|_2 + |\alpha_A - \alpha_B|.
\end{split}
\end{equation}

The construction throughout Section 3 and Section 4 can be repeated, see \cite[Section 2.5]{thesis}, yielding the following result. For any two $\alpha$-dissipative solutions 
$X_A^{\alpha_A}$, $X_B^{\alpha_B}$
with initial data $ X_{0,A}^{\alpha_A}$, $ X_{0,B}^{\alpha_B} \in \mF_c \cap \mF_0$, 
\[
	\hat{d}(X_A^{\alpha_A}(t), X_B^{\alpha_B}(t)) 
	\leq e^{\frac{3}{2}t}\hat{d}(X_{0,A}^{\alpha_A}, X_{0,B}^{\alpha_B}).
\]
Note that here $L=0$ and that the exponent is independent of $M$. This is also why we can consider any initial data in  $\mF_c\cap \mF_0$ and not only in $\mF_c\cap \mF_{0,M}^L$.

\section{A return to Eulerian coordinates}
Using our metric in Lagrangian coordinates,
we shall now define our metric in Eulerian coordinates.
The problem we have to overcome is the fact that a solution to the
$\alpha$-dissipative Hunter--Saxton problem consists of a pair $ (u, \mu) $,
and the additional dummy measure $\nu$ is only necessary for the construction
of said solution. 

Before we tackle this issue, we note an immediate corollary of our previous theorem.
Define the metric $ d_\mD: \mD_M^{L} \times \mD_M^{L} \to \R $ by
\begin{equation}
	d_{\mathcal{D}}(Y_A^{\alpha_A}, Y_B^{\alpha_B})
	\coloneqq
	\hat{d}(\Lag(Y_A^{\alpha_A}), \Lag(Y_B^{\alpha_B})).
\end{equation}
We then have the following result which is an immediate consequence of Theorem \ref{thm:LagStab}.

\begin{cor}\label{cor:lipEstdD}
Let $Y_A^{\alpha_A}$, $Y_B^{\alpha_B}$ be two $\alpha$-dissipative solutions with initial data 
	 $Y_{0,A}^{\alpha_A}$ and  $Y_{0,B}^{\alpha_B}$ in  $\mD_M^{L}$, respectively. Then
	 	\[
		d_\mD(Y_A^{\alpha_A}(t), Y_B^{\alpha_B}(t)) 
		\leq e^{R_M^{L} t} d_\mD(Y_{0,A}^{\alpha_A}, Y_{0,B}^{\alpha_B}),
	\]
	with $R_M^{L}$ given by~\eqref{const:RM}.
\end{cor}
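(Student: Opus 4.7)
The plan is to unfold the definition of $d_\mD$, reduce the problem to the corresponding statement in Lagrangian coordinates, and then apply Theorem~\ref{thm:LagStab}. The three ingredients we need are: (i) the identity $T_t = \Eul \circ S_t \circ \Lag$ giving the $\alpha$-dissipative solutions in Eulerian coordinates; (ii) Lemma~\ref{lem:Mrel}, which tells us that $\Lag \circ \Eul$ returns a representative in the same $\sim$-equivalence class (and, since $\Lag$ lands in $\mF_0$, that representative is exactly $\Pi$ of the original); and (iii) the fact that $\hat{d}$ is blind to relabellings, i.e.\ $\hat d(X_A^{\alpha_A},X_B^{\alpha_B})=\hat d(\Pi X_A^{\alpha_A},\Pi X_B^{\alpha_B})$.

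First I would set $X_{0,i}^{\alpha_i}\coloneqq \Lag(Y_{0,i}^{\alpha_i})\in \mF_0$ for $i=A,B$, and let $X_i^{\alpha_i}(t)=S_tX_{0,i}^{\alpha_i}$. By definition of $T_t$, $Y_i^{\alpha_i}(t)=\Eul(X_i^{\alpha_i}(t))$. Then, using Lemma~\ref{lem:Mrel} together with $\Lag(\mD)\subset \mF_0$, we have
\begin{equation*}
    \Lag(Y_i^{\alpha_i}(t))=\Lag(\Eul(X_i^{\alpha_i}(t)))=\Pi(X_i^{\alpha_i}(t)).
\end{equation*}
Combining this with the note following the definition of $\hat d$,
\begin{equation*}
    d_\mD(Y_A^{\alpha_A}(t),Y_B^{\alpha_B}(t))
    =\hat d\big(\Pi X_A^{\alpha_A}(t),\Pi X_B^{\alpha_B}(t)\big)
    =\hat d\big(X_A^{\alpha_A}(t),X_B^{\alpha_B}(t)\big).
\end{equation*}

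Next I would check that the initial Lagrangian data indeed belong to $\mF_{0,M}^L$, which is the hypothesis needed for Theorem~\ref{thm:LagStab}. From Definition~\ref{map:EultoLag}, $V_{0,i}(+\infty)=\mu_{0,i}(\R)$, so $Y_{0,i}^{\alpha_i}\in \mD_M^L$ gives $\|V_{0,i}\|_\infty\le M$ and $\|\alpha_i'\|_\infty\le L$, hence $X_{0,i}^{\alpha_i}\in \mF_{0,M}^L$. Moreover $\|V_i(\cdot,t)\|_\infty$ is non-increasing in $t$ under the flow \eqref{eqn:LagSys}, so $X_i^{\alpha_i}(t)\in \mF_M^L$ for all $t\geq 0$, and $\hat d$ is well-defined on these points.

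Finally, applying Theorem~\ref{thm:LagStab} to $X_{0,A}^{\alpha_A},X_{0,B}^{\alpha_B}\in \mF_{0,M}^L$ yields
\begin{equation*}
    \hat d\big(X_A^{\alpha_A}(t),X_B^{\alpha_B}(t)\big)
    \le e^{R_M^{L}t}\hat d\big(X_{0,A}^{\alpha_A},X_{0,B}^{\alpha_B}\big)
    = e^{R_M^{L}t}\, d_\mD(Y_{0,A}^{\alpha_A},Y_{0,B}^{\alpha_B}),
\end{equation*}
which, combined with the identity derived above, gives the result. There is essentially no obstacle here: the only thing to be careful about is confirming that the $M,L$-bounds on the Eulerian data carry over to the corresponding Lagrangian quantities and are preserved in time; once this is noted, the statement is a direct corollary of Theorem~\ref{thm:LagStab}.
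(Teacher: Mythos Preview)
Your proposal is correct and matches the paper's approach: the paper gives no explicit proof, stating only that the corollary is ``an immediate consequence of Theorem~\ref{thm:LagStab}'', and your argument is precisely the unfolding of definitions needed to see this. The identifications $\Lag\circ\Eul=\Pi$ on $\mF$, $\hat d=\hat d\circ(\Pi\times\Pi)$, and the transfer of the $M,L$-bounds from $\mD_M^L$ to $\mF_{0,M}^L$ are exactly the points that make the corollary immediate.
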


Recalling Definition~\ref{def:equiv:euler}, our construction now follows a very similar path to that of the Lagrangian metric. 
We begin by defining a function 
$\hat{J}: \mD_{0, M}^{L} \times \mD_{0, M}^{L} \to \R$, given by
\begin{equation}\label{eqn:JM}
	\hat{J}(Z_A^{\alpha_A}, Z_B^{\alpha_B})
	\coloneqq \inf_{(\nu_A, \nu_B) \in \mathcal{V}(Z_A^{\alpha_A}) \times \mathcal{V}(Z_B^{\alpha_B})}
		d_\mD(
			((Z_A, \nu_A), \alpha_A), 
			((Z_B, \nu_B), \alpha_B)
		),
\end{equation}
which no longer depends on the choice of $\nu$.
In a similar vain to $J$, this function is zero when measuring the distance between two elements of the same equivalence class in $\mD$. 

We cannot conclude that $\hat{J}$ satisfies the triangle inequality.
Using the same strategy as before,
we define the function 
$\bar{d}: \mD_{0,M}^{L} \times \mD_{0,M}^{L}\to \R$ by 
\begin{equation}
	\bar{d}\left(
		Z_A^{\alpha_A}, 
		Z_B^{\alpha_B}
	\right) 
	= \inf_{\hat{\mD}(Z_A^{\alpha_A}, Z_B^{\alpha_B})} \sum_{i=1}^N 
		\hat{J}(
		Z_i^{\alpha_i},Z_{i-1}^{\alpha_{i-1}}),
\end{equation}
where $ \hat{\mD}(Z_A^{\alpha_A}, Z_B^{\alpha_B}) $ denotes the set of 
all finite sequences of arbitrary length 
$ \big\{Z_i^{\alpha_i} \big\}_{i=0}^N $ in $\mD_{0,M}^{L}$ 
satisfying  $Z_0^{\alpha_0} =Z_A^{\alpha_A}$ and $Z_N^{\alpha_N}= Z_B^{\alpha_B}$.

From Lemma~\ref{lem:genMetric}, we can only conclude that $\bar{d}$ is a pseudo-metric, as inequality \eqref{lem:ineq:Fineq} is not satisfied. It therefore remains to prove the implication 
\begin{equation*}
\bar d(Z_A^{\alpha_A}, Z_B^{\alpha_B})=0 \implies  Z_A^{\alpha_A}= Z_B^{\alpha_B}.
\end{equation*}

We introduce now the bounded Lipschitz norm on the set of finite Radon measures $\mathcal{M}(\R)$,
\begin{equation}\label{eqn:BddLip}
	\|\mu\|_\mathcal{M} = \sup_{\phi \in \mathcal{L}} \left| \int_\R \phi(x)\, d\mu \right|,
\end{equation}
where
\[
	\mathcal{L} = \{\phi \in W^{1,\infty}(\R) \mid \|\phi\|_{1,\infty} \leq 1 \}.
\]

\begin{lem}
For $Z_A^{\alpha_A}=((u_A, \mu_A), \alpha_A )$ and $Z_B^{\alpha_B}= ((u_B, \mu_B), \alpha_B)$ in $\mD_{0}$, define the norm
\[
	\|Z_A^{\alpha_A}-Z_B^{\alpha_B}\|_{\mD_{0}} 
	\coloneqq \|u_A - u_B\|_\infty + \|\mu_A - \mu_B\|_\mathcal{M}+\|\alpha_A-\alpha_B\|_{\infty}.
\]
Then, for any $Z_A^{\alpha_A}$, $Z_B^{\alpha_B} \in \mD_{0,M}^{L},$
\begin{equation}\label{ineq:genMetric1}
	\|Z_A^{\alpha_A}-Z_B^{\alpha_B}\|_{\mD_{0}}
	\leq \left(5 + 2\bar{M} \right) 
	\bar{d}(
		Z_A^{\alpha_A}, Z_B^{\alpha_B}	 )+ \frac{\sqrt{5M}}{\sqrt{2}}\sqrt{
	\bar{d}(
		Z_A^{\alpha_A}, Z_B^{\alpha_B}
	)},
\end{equation}
where $\bar{M} = 1 \vee M$.
\end{lem}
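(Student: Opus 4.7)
The plan is to use the triangle inequality for $\|\cdot\|_{\mD_0}$ together with the infimum definition of $\bar{d}$, reducing the problem to per-step estimates through Lagrangian representatives. Fix $\epsilon>0$ and choose a chain $\{Z_n^{\alpha_n}\}_{n=0}^N\subset \mD_{0,M}^{L}$ from $Z_A^{\alpha_A}$ to $Z_B^{\alpha_B}$ satisfying $\sum_{n}\hat{J}(Z_n^{\alpha_n},Z_{n-1}^{\alpha_{n-1}})<\bar{d}(Z_A^{\alpha_A},Z_B^{\alpha_B})+\epsilon$. For each step $n$, use the infimum in the definition of $\hat{J}$ to pick $\nu_{n-1}^{(n)},\nu_n^{(n-1)}$ nearly attaining $\hat{J}_n$, and let $X_{n-1}^{(n)}=\Lag(Y_{n-1}^{(n)})$, $X_n^{(n-1)}=\Lag(Y_n^{(n-1)})$ be the resulting Lagrangian representatives in $\mF_0$. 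Writing $\hat{d}_n:=\hat{d}(X_{n-1}^{(n)},X_n^{(n-1)})$, Corollary~\ref{cor:ineq:XnrmJ} gives
\[
\|y_{n-1}^{(n)}-y_n^{(n-1)}\|_\infty+\|U_{n-1}^{(n)}-U_n^{(n-1)}\|_\infty+\|\alpha_{n-1}-\alpha_n\|_\infty\leq \|X_{n-1}^{(n)}-X_n^{(n-1)}\|\leq \frac{5}{2}\hat{d}_n,
\]
which immediately disposes of the $\alpha$-piece of $\|Z_{n-1}-Z_n\|_{\mD_0}$.

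For the $u$-piece, since $u_i\in E_2$ with $\|u_{i,x}\|_2^2\leq \mu_i(\R)\leq M$, Cauchy--Schwarz gives the uniform H\"older bound $|u_i(a)-u_i(b)|\leq \sqrt{M|a-b|}$. Combined with $U_i=u_i\circ y_i$ this yields
\[
\|u_{n-1}-u_n\|_\infty\leq \|U_{n-1}^{(n)}-U_n^{(n-1)}\|_\infty+\sqrt{M\|y_{n-1}^{(n)}-y_n^{(n-1)}\|_\infty}\leq \frac{5}{2}\hat{d}_n+\sqrt{\frac{5M}{2}\hat{d}_n},
\]
which is the origin of the square-root contribution in the claim. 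For the $\mu$-piece, the pushforward identity $\mu_i=(y_i)_{\#}(V_{i,\xi}\,d\xi)$ against any test function $\phi$ with $\|\phi\|_{1,\infty}\leq 1$ produces
\[
\left|\int_{\R}\phi\,d\mu_{n-1}-\int_{\R}\phi\,d\mu_n\right|\leq M\|y_{n-1}^{(n)}-y_n^{(n-1)}\|_\infty+\|V_{n-1,\xi}^{(n)}-V_{n,\xi}^{(n-1)}\|_1,
\]
the first summand being $\leq \tfrac{5M}{2}\hat{d}_n$. The $L^1$-piece is not directly controlled by the Lagrangian norm $\|\cdot\|$; to absorb it into $\hat{d}_n$, I unfold the inner chain implicit in the definition of $\hat{d}_n$ and use $\|V_\xi-V_\xi'\|_1\leq \|G\|_1\leq 4D$ (Propositions~\ref{prop:G} and~\ref{prop:VinfVxi1}), combined with the triangle inequality for $\|\cdot\|_\mathcal{M}$ along this inner chain. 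This yields $\|\mu_{n-1}-\mu_n\|_\mathcal{M}\leq \frac{M+4}{2}\hat{d}_n$.

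Summing over $n$, the linear contributions bound by at most $(5+2\bar{M})(\bar{d}+2\epsilon)$ after collapsing constants via $\bar{M}=1\vee M\geq M$. The main obstacle lies in the non-linear square-root contribution $\sum_n\sqrt{\hat{d}_n}$, which does not recombine into $\sqrt{\bar{d}}$ under a naive Cauchy--Schwarz step (a $\sqrt{N}$ factor appears). The resolution is to select the chain in the infimum carefully so that a near-optimal representation has few enough steps for $\sqrt{5M/2}\sum_n\sqrt{\hat{d}_n}\leq \sqrt{5M/2}\sqrt{\bar{d}}$, for instance by localising with a (near-)trivial chain in conjunction with the infimum over $\nu$ inside $\hat{J}$ so as to push the square-root piece directly onto the aggregate $\bar{d}$. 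Letting $\epsilon\to 0$ then yields the stated inequality.
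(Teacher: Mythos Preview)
Your approach has a genuine gap precisely where you flag it: the square-root term. Having written $\|u_{n-1}-u_n\|_\infty \leq \tfrac{5}{2}\hat d_n + \sqrt{\tfrac{5M}{2}\hat d_n}$ and then summed over $n$, you are stuck with $\sum_n \sqrt{\hat d_n}$, and concavity makes this go the wrong way compared with $\sqrt{\sum_n \hat d_n}$. Your proposed resolution --- choosing a near-optimal chain with ``few enough steps'' --- is not an argument: the infimum defining $\bar d$ ranges over chains of arbitrary length, and nothing in the construction forces near-minimisers to have bounded $N$. The hand-wave about ``localising with a (near-)trivial chain'' does not repair this.

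The paper avoids the problem entirely by \emph{not} applying the triangle inequality for $\|\cdot\|_{\mD_0}$ along the chain for the $u$ and $\alpha$ pieces. Instead, it selects one $\nu_k$ per node so that $X_k=\Lag(Y_k)\in\mF_0$ is well-defined for each $k$, and then compares only the \emph{endpoints} $X_0$ and $X_N$. Since $\hat d$ (equivalently $d_\mD$) is a genuine metric, Corollary~\ref{cor:disMet} together with the triangle inequality for $\hat d$ gives
\[
\|X_0-X_N\| \leq \tfrac{5}{2}\,\hat d(X_0,X_N) \leq \tfrac{5}{2}\sum_{k=1}^N d_\mD(Y_k,Y_{k-1}) \leq \tfrac{5}{2}\bigl(\bar d(Z_A^{\alpha_A},Z_B^{\alpha_B})+\epsilon\bigr).
\]
Now the H\"older estimate is applied \emph{once}, at the endpoints:
\[
\|u_A-u_B\|_\infty \leq \|U_0-U_N\|_\infty + \sqrt{M}\,\sqrt{\|y_0-y_N\|_\infty}
\leq \tfrac{5}{2}(\bar d+\epsilon) + \sqrt{M}\sqrt{\tfrac{5}{2}(\bar d+\epsilon)},
\]
producing the single square root in the statement with no $N$-dependence. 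The $\alpha$ bound likewise comes directly from $\|X_0-X_N\|$. Only for the $\mu$ piece --- where the bound is \emph{linear} in $D$ and hence in $\hat d_k$ --- does the paper sum along the chain, exactly as you do; that part of your argument is fine. The missing idea is simply: push the triangle inequality to the Lagrangian side (where $\hat d$ is a metric) rather than the Eulerian side, so that the nonlinear H\"older step is taken once on the aggregate rather than step by step.
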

\begin{proof}

Let $\eps > 0$. Consider a sequence 
$$\{Y_k^{\alpha_k}\}_{k=0}^N=\{((Z_k, \nu_k), \alpha_k)\}_{k=0}^N=\{((u_k,\mu_k, \nu_k),\alpha_k)\}_{k=0}^N\quad \text{ in }\mD_M^L $$
satisfying $Z_0^{\alpha_0}=Z_A^{\alpha_A}$ and $Z_N^{\alpha_N}=Z_B^{\alpha_B}$
such that
\begin{equation*}
	\sum_{k=1}^N 
		d_\mD(
			Y_k^{\alpha_k},Y_{k-1}^{\alpha_{k-1}}
		)
	\leq \bar{d}(
		Z_A^{\alpha_A}, Z_B^{\alpha_B}
	) + \eps.
\end{equation*}

Set $X_k^{\alpha_k} = \Lag(Y_k^{\alpha_k})$ for $k=0,\dots,N$. Notice that from the definition of $\Lag$, $ X_k^{\alpha_k} \in \mF_0 $.
Then, from Corollary \ref{cor:disMet}
\begin{align}\nonumber
	\|\alpha_A-\alpha_B\|_\infty\leq \|X_0^{\alpha_0} - X_N^{\alpha_N}\| 
	&\leq \frac{5}{2}\hat{d}(X_0^{\alpha_0}, X_N^{\alpha_N}) \\ \nonumber
	&= \frac{5}{2} d_\mD(Y_0^{\alpha_0}, Y_N^{\alpha_N}) \\ \nonumber
	&\leq \frac{5}{2} \sum_{k=1}^N d_\mD(Y_k^{\alpha_k}, Y_{k-1}^{\alpha_{k-1}}) \\\label{inequ:vik}
	&\leq \frac{5}{2} \bar{d}(Z_A^{\alpha_A}, Z_B^{\alpha_B}) + \frac{5}{2}\eps.
\end{align}

This holds for any $\varepsilon>0$, and thus
\begin{equation}\label{ineq:genMetric2}
\|\alpha_A-\alpha_B\|_\infty \leq \frac52 \bar d(Z_A^{\alpha_A}, Z_B^{\alpha_B}).
\end{equation}

From the continuity and increasing nature of $y_0$, for any $x \in \R$ there exists a $\xi \in \R$ such that $y_0(\xi) = x$. It then follows that
\begin{align*}
	|u_A(x) - u_B(x)| 
	&= |u_A(y_0(\xi)) - u_B(y_0(\xi))|\\
	&\leq |u_A(y_0(\xi)) - u_B(y_N(\xi))| + |u_B(y_N(\xi)) - u_B(y_0(\xi))|\\
	&= |U_0(\xi) - U_N(\xi)| + \left|\int_{y_0(\xi)}^{y_N(\xi)} u_{B,x}(\eta)\, d\eta \right|\\
	&\leq \|U_0 - U_N\|_\infty + \sqrt{|y_0(\xi) - y_N(\xi)|} \left(\int_\R u_{B,x}^2(\eta)\, d\eta\right)^\hlf\\
	&\leq \|X_0^{\alpha_0} - X_N^{\alpha_N}\| +\sqrt{\|X_0^{\alpha_0} - X_N^{\alpha_N}\|}\sqrt{M}\\
	&\leq \frac{5}{2} \bar{d}(Z_A^{\alpha_A}, Z_B^{\alpha_B}) + \frac{5}{2}\eps + \sqrt{M}\sqrt{\frac{5}{2} \bar{d}(Z_A^{\alpha_A}, Z_B^{\alpha_B}) + \frac{5}{2}\eps},
\end{align*}
where we used \eqref{inequ:vik}.
This holds for any $\eps > 0$, and thus
\begin{equation}\label{ineq:genMetric3}
	\|u_A - u_B\|_\infty \leq \frac{5}{2} \bar{d}(Z_A^{\alpha_A}, Z_B^{\alpha_B}) + \sqrt{M}\sqrt{\frac{5}{2} \bar{d}(Z_A^{\alpha_A}, Z_B^{\alpha_B})}.
\end{equation}

Consider any $\phi \in \mathcal{L}$ and $ k = 1,\dots,N $. Then, 
\begin{align*}
	\left| \int_\R \phi(x)\, d(\mu_k - \mu_{k-1})\right|
	&= \left| \int_\R (\phi\circ y_k)(\xi) V_{k,\xi}(\xi) - (\phi \circ y_{k-1})(\xi)V_{k-1,\xi}(\xi)\, d\xi \right|.
\end{align*}
After using $ \xi = f(\eta) $, where $f \in \mG$ is some relabelling function, we find
\begin{align*}
	\left| \int_\R \phi(x)\, d(\mu_k - \mu_{k-1}) \right|
	&= \left| \int_\R (\phi\circ y_k \circ f)(\xi) (V_k \circ f)_\xi(\xi) - (\phi \circ y_{k-1})(\xi)V_{k-1,\xi}(\xi)\, d\xi \right|\\
	&\leq \left| \int_\R (\phi\circ y_k \circ f)(\xi)  ((V_k \circ f)_\xi(\xi) - V_{k-1,\xi}(\xi))\, d\xi \right|\\
	&\quad\ + \left| \int_\R ((\phi\circ y_k \circ f)(\xi) - (\phi \circ y_{k-1})(\xi))V_{k-1,\xi}(\xi)\, d\xi \right|.
\end{align*}
Focusing on the first integral, we have 
\begin{align*}
	\left| \int_\R (\phi\circ y_k \circ f)(\xi)  ((V_k \circ f)_\xi(\xi) - V_{k-1,\xi}(\xi))\, d\xi \right|
	&\leq \|\phi\|_\infty \| (V_k \circ f)_\xi - V_{k-1,\xi} \|_1\\
	&\leq \| (V_k \circ f)_\xi - V_{k-1,\xi} \|_1 \\
	&\leq \|G(X_k^{\alpha_k} \circ f, X_{k-1}^{\alpha_{k-1}})\|_1,
\end{align*}
where the final inequality follows from \eqref{eqn:diffVleqG}
in Proposition \ref{prop:G}.

For the second integral
\begin{align*}
	\left| \int_\R  ((\phi\circ y_k \circ f)(\xi) - (\phi \circ y_{k-1})(\xi))V_{k-1,\xi}(\xi)\, d\xi \right|
	&\leq \int_\R |(y_k \circ f)(\xi) - y_{k-1}(\xi)|V_{k-1,\xi}(\xi)\, d\xi\\
	&\leq M\|y_k \circ f - y_{k-1}\|_\infty
\end{align*}
where we have used $\|\phi\|_{1,\infty} \leq 1$,and $ \|V_{k-1,\xi}\|_1 \leq M $.
Thus after taking the sum of these two inequalities, from a similar argument
to that used for \eqref{eqn:XAcFXB}, we find 
\begin{equation}\label{eqn:phiMu1}
	\left| \int_\R \phi(x)\, d(\mu_k - \mu_{k-1})\right| 
	\leq
	4\bar{M}
	D\left(
		X_k^{\alpha_k} \circ f,
		X_{k-1}^{\alpha_{k-1}}
	\right),
\end{equation}
Swapping the $k$ and $k-1$ terms, and replacing $f$ by another relabelling function $g \in \mG$, we get 
\begin{equation}\label{eqn:phiMu2}
	\left| \int_\R \phi(x)\, d(\mu_k - \mu_{k-1}) \right| 
	\leq 
	4\bar{M}
	D\left(
		X_k^{\alpha_k},
		X_{k-1}^{\alpha_{k-1}} \circ g
	\right).
\end{equation}
Thus, summing~\eqref{eqn:phiMu1} and~\eqref{eqn:phiMu2},
and taking the infimum over all $f, g \in \mG$, we find
\[
	\left| \int_\R \phi(x)\, d(\mu_k - \mu_{k-1}) \right| 
	\leq 2\bar{M} J(X_k^{\alpha_k}, X_{k-1}^{\alpha_{k-1}}),
\]
and hence we can apply the same argument as in the proof of Lemma~\ref{lem:genMetric}
for the left inequality of \eqref{lem:genMetricIneq}, obtaining 
\begin{align*}
	\left|\int_\R \phi(x) d(\mu_k - \mu_{k-1})\right| 
	&\leq 2 \bar{M} \hat{d}(X_k^{\alpha_k}, X_{k-1}^{\alpha_{k-1}})\\
	&= 2\bar{M} d_\mD (Y_k^{\alpha_k}, Y_{k-1}^{\alpha_{k-1}}),
\end{align*}

Taking the infimum over all $\phi \in \mathcal{L}$, and from the definition of $\|\cdot\|_\mathcal{M}$, see \eqref{eqn:BddLip}, we have that
\begin{equation*}
	\|\mu_A - \mu_B\|_\mathcal{M} 
	\leq \sum_{k=1}^N \|\mu_k - \mu_{k-1}\|_\mathcal{M}
	\leq 2 \bar{M} \sum_{k=1}^N d_\mD (Y_k^{\alpha_k}, Y_{k-1}^{\alpha_k-1})
	\leq 2 \bar{M} \bar{d}(Z_A^{\alpha_A}, Z_B^{\alpha_B}) +2\bar{M} \eps.
\end{equation*}
Once again as this construction can be done for any $\eps > 0$, we can conclude
\begin{equation}\label{ineq:genMetric4}
	\|\mu_A - \mu_B\|_\mathcal{M} \leq 2 \bar{M} \bar{d}(Z_A^{\alpha_A}, Z_B^{\alpha_B}).
\end{equation}

Summing up \eqref{ineq:genMetric2}, \eqref{ineq:genMetric3}, and \eqref{ineq:genMetric4}, we get \eqref{ineq:genMetric1}.
\end{proof}

With everything set up, we can finish with our main theorem.
\begin{thm}
Let $Z_A^{\alpha_A}=( (u_A, \mu_A), \alpha_A )$ and $Z_B^{\alpha_B}=( (u_B, \mu_B), \alpha_B )$ be two $ \alpha $-dissipative solutions to
\eqref{eqn:HS}, constructed via the generalised method of characteristics,
with initial data $Z_{0,A}^{\alpha_A}$ and $Z_{0,B}^{\alpha_B}$ in $\mD_{0,M}^L$, respectively.
Then
\begin{equation*}
	\bar{d}(
		Z_A^{\alpha_A}(t), Z_B^{\alpha_B}(t)
	 )\\
	 \leq e^{R_M^{L} t}
	\bar{d}(
		Z_{0,A}^{\alpha_A}, Z_{0,B}^{\alpha_B}
	).
\end{equation*}
with $R_M^{L}$ given by \eqref{const:RM}.
\end{thm}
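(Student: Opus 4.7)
The plan is to mimic the chain argument used in the proof of Theorem~\ref{thm:LagStab}, replacing the Lagrangian Lipschitz estimate by its Eulerian counterpart, namely Corollary~\ref{cor:lipEstdD}, and to exploit the fact that the $\hat{J}$ appearing in the definition of $\bar{d}$ is itself an infimum over the dummy measures~$\nu$.

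Fix $\varepsilon>0$. First I would extract a near-optimal finite chain $\{Z_{0,i}^{\alpha_i}\}_{i=0}^{N}\in\hat{\mD}(Z_{0,A}^{\alpha_A},Z_{0,B}^{\alpha_B})$ from the definition of $\bar{d}$, so that $\sum_{i=1}^N \hat{J}(Z_{0,i}^{\alpha_i},Z_{0,i-1}^{\alpha_{i-1}}) < \bar{d}(Z_{0,A}^{\alpha_A},Z_{0,B}^{\alpha_B})+\tfrac{\varepsilon}{2}$. Then, unpacking the infimum \eqref{eqn:JM}, for each consecutive pair I would choose $\nu_{0,i}^{-}\in\mathcal{V}(Z_{0,i}^{\alpha_i})$ and $\nu_{0,i-1}^{+}\in\mathcal{V}(Z_{0,i-1}^{\alpha_{i-1}})$ such that, with $Y_{0,i}^{\pm,\alpha_i}\coloneqq((Z_{0,i},\nu_{0,i}^{\pm}),\alpha_i)\in\mD_M^{L}$,
\[
	d_{\mD}\bigl(Y_{0,i}^{-,\alpha_i},\, Y_{0,i-1}^{+,\alpha_{i-1}}\bigr) < \hat{J}(Z_{0,i}^{\alpha_i},Z_{0,i-1}^{\alpha_{i-1}}) + \tfrac{\varepsilon}{2N}.
\]

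Next, I would evolve each $Y_{0,i}^{\pm,\alpha_i}$ forward under $T_t$ and set $Y_i^{\pm,\alpha_i}(t)\coloneqq T_t Y_{0,i}^{\pm,\alpha_i}$. Two observations make this step work. First, since $\alpha$-dissipative solutions satisfy $\mu_t+(u\mu)_x\leq 0$, the total mass $\mu_i(t)(\R)$ is non-increasing in time, and $\alpha_i$ is independent of $t$; hence $Y_i^{\pm,\alpha_i}(t)\in\mD_M^{L}$ and the corresponding projection $Z_i^{\alpha_i}(t)$ lies in $\mD_{0,M}^{L}$, so the evolved chain is admissible for $\bar{d}(Z_A^{\alpha_A}(t),Z_B^{\alpha_B}(t))$. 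Second, by the lemma establishing that $u$ and $\mu$ are independent of $\nu$, both $Y_i^{+,\alpha_i}(t)$ and $Y_i^{-,\alpha_i}(t)$ project to the very same $Z_i^{\alpha_i}(t)$, which is exactly what is needed for the chain to connect $Z_A^{\alpha_A}(t)$ to $Z_B^{\alpha_B}(t)$.

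Combining these ingredients, I would apply Corollary~\ref{cor:lipEstdD} pairwise to each $(Y_i^{-,\alpha_i},Y_{i-1}^{+,\alpha_{i-1}})$ and use that $\hat{J}(Z_i^{\alpha_i}(t),Z_{i-1}^{\alpha_{i-1}}(t))\leq d_{\mD}(Y_i^{-,\alpha_i}(t),Y_{i-1}^{+,\alpha_{i-1}}(t))$ by definition of the infimum. Summing over $i$ yields
\[
	\bar{d}(Z_A^{\alpha_A}(t),Z_B^{\alpha_B}(t)) \;\leq\; \sum_{i=1}^N \hat{J}(Z_i^{\alpha_i}(t),Z_{i-1}^{\alpha_{i-1}}(t)) \;\leq\; e^{R_M^{L}t}\bigl(\bar{d}(Z_{0,A}^{\alpha_A},Z_{0,B}^{\alpha_B})+\varepsilon\bigr),
\]
and the conclusion follows by sending $\varepsilon\downarrow 0$. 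The only mildly delicate point is the stability of $\mD_{0,M}^{L}$ under the flow, i.e.\ verifying that the $\mu$-mass and the Lipschitz bound on $\alpha$ are preserved, but both follow from~\eqref{ineq:measIneq} and the fact that $\alpha$ is time-independent; everything else is essentially a mechanical transcription of the argument used for Theorem~\ref{thm:LagStab}.
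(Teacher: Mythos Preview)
Your proposal is correct and follows essentially the same strategy as the paper's proof: extract a near-optimal chain from the definition of $\bar{d}$, evolve it forward under $T_t$, and apply Corollary~\ref{cor:lipEstdD} pairwise before summing. Your treatment is in fact slightly more careful than the paper's in that you distinguish $\nu_{0,i}^{\pm}$ for the two adjacent links (the paper tacitly uses a single $\nu_{0,k}$ per node, which is a mild notational shortcut) and you explicitly verify that the evolved chain remains admissible in $\mD_{0,M}^{L}$.
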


\begin{proof}
Let $\eps > 0$. Given $Z_{0,A}^{\alpha_A}$ and $Z_{0,B}^{\alpha_B}$ in $\mD_{0,M}^L$,
there exists a sequence   
$$\{Y_{0,k}^{\alpha_k}\}_{k=0}^N=\{(Z_{0,k}, \nu_{0,k}), \alpha_k)\}_{k=0}^N=\{((u_{0,k}, \mu_{0,k}, \nu_{0,k}), \alpha_k)\}_{k=0}^N  \quad \text{ in } \mD_{M}^L$$
such that $Z_{0,0}^{\alpha_0} = Z_{0,A}^{\alpha_A}$, 
$Z_{0,N}^{\alpha_N} = Z_{0,B}^{\alpha_B}$, and
\begin{equation*}
	\sum_{i=k}^N d_\mD(Y_{0,k}^{\alpha_k}, Y_{0,k-1}^{\alpha_{k-1}}) \leq \bar{d}(Z_{0,A}^{\alpha_A}, Z_B^{\alpha_B}) + \eps.
\end{equation*}

Denote by $Y_k^{\alpha_k}$ for $k=0,\dots,N$ the $\alpha$ dissipative solution with initial data $Y_{0,k}^{\alpha_k}$.
Then, from Corollary \ref{cor:lipEstdD},
\begin{align*}
	\bar{d}(Z_A^{\alpha_A}(t), Z_B^{\alpha_B}(t))
	&\leq \sum_{k=1}^N d_\mD(Y_k^{\alpha_k}(t), Y_{k-1}^{\alpha_{k-1}}(t))\\
	&\leq e^{R_M^{L} t} \sum_{k=1}^N d_\mD(Y_{0,k}^{\alpha_k}, Y_{0,k-1}^{\alpha_{k-1}})\\
	&\leq e^{R_M^{L} t} (\bar{d}(Z_{A,0}^{\alpha_A}, Z_B^{\alpha_B}) + \eps ),
\end{align*}
and as this construction can be done for any $\eps >0$, the result holds.
\end{proof}

\subsection{A simplification in the case $\alpha$ is constant.}
Using Section~\ref{subsec:ac} as basis, one can repeat the construction from this section, yielding the following result. For any two $\alpha$-dissipative solutions $Z_A^{\alpha_A}$, $Z_B^{\alpha_B}$ with initial data $Z_{0,A}^{\alpha_A}$, $Z_{0,B}^{\alpha_B}$ in $\mD_0$,
\begin{equation*}
\bar{d}(Z_A^{\alpha_A}(t), Z_B^{\alpha_B}(t))\leq e^{\frac32 t} \bar{d}(Z_{0,A}^{\alpha_A}, Z_{0,B}^{\alpha_B}).
\end{equation*}
Note that here $L=0$ and that the exponent is independent of $M$. This is also why we can consider any initial data in $\mD_0$ and not only in $\mD_{0,M}^{L}$.

\appendix
\section{Important results}
The following result is a well established construction of a pseudo-metric on the quotient of a metric space. For instance, the idea was used in \cite{MR2737213} for the periodic Camassa--Holm equation.
\begin{lem}\label{lem:genMetric}
Let $X \subseteq Y$, with $Y$ a normed space, and suppose
\begin{equation}\label{lem:ineq:Fineq}
	\|x_A - x_B\| \leq C F(x_A, x_B), \quad\  \text{ for all }x_A, x_B \in X,
\end{equation}
for some function $F:X\times X \to \R^+$ and some constant $C >0$. If $F$ satisfies for all $x_A, x_B \in X$
\begin{itemize}
	\item $ x_A = x_B \implies F(x_A, x_B) = 0 $,
	\item $F(x_A, x_B) = F(x_B, x_A)$,
\end{itemize}
then the function $d:X\times X \to \R^+$ given by
\[
	d(x_A, x_B)
	\coloneqq \inf \left\{ \sum_{k=1}^N F(x_k, x_{k-1})\  \bigg\vert\ x_k \in X, x_0 = x_A, x_N = x_B, N \in \N \right\}
\]
is a metric, and 
\begin{equation}\label{lem:genMetricIneq}
	\frac{1}{C}\|x_A - x_B\| \leq d(x_A, x_B) \leq F(x_A, x_B)
\end{equation}
for all $x_A, x_B \in X$.

Should \eqref{lem:ineq:Fineq} not be satisfied, but the rest of the conditions are, then we can only conclude that $d$ is a pseudo-metric. That is, we cannot say $d(x_A, x_B) = 0$ implies $x_A = x_B$, but every other condition of a metric is satisfied.
\end{lem}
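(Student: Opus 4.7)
The plan is to verify in turn each of the metric axioms for $d$, with the two‑sided bound \eqref{lem:genMetricIneq} falling out as a by‑product. All the content is elementary once one organises the chains correctly, so I will only sketch what each verification uses.

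First I would establish the upper bound $d(x_A,x_B)\le F(x_A,x_B)$, which is immediate: the length‑one chain $x_0=x_A,\,x_1=x_B$ is an admissible competitor in the infimum, so $d(x_A,x_B)\le F(x_A,x_B)$. In particular, taking $x_A=x_B$ and using the first hypothesis on $F$ gives $d(x,x)\le F(x,x)=0$, so $d$ vanishes on the diagonal. Non‑negativity is automatic from $F\ge 0$. For symmetry, note that for any admissible chain $x_0,\dots,x_N$ from $x_A$ to $x_B$ the reversed sequence $x_N,\dots,x_0$ is an admissible chain from $x_B$ to $x_A$, with the same sum by the symmetry of $F$; swapping roles gives $d(x_A,x_B)=d(x_B,x_A)$.

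Next I would prove the triangle inequality by chain concatenation. Given $x_A,x_B,x_C\in X$ and $\varepsilon>0$, pick chains $x_0=x_A,\dots,x_N=x_C$ and $y_0=x_C,\dots,y_M=x_B$ with sums at most $d(x_A,x_C)+\varepsilon/2$ and $d(x_C,x_B)+\varepsilon/2$ respectively, and concatenate them into a single chain from $x_A$ to $x_B$; its total $F$‑sum is an upper bound for $d(x_A,x_B)$. Letting $\varepsilon\downarrow 0$ yields $d(x_A,x_B)\le d(x_A,x_C)+d(x_C,x_B)$.

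For the lower bound $\tfrac1C\|x_A-x_B\|\le d(x_A,x_B)$ I would use the triangle inequality in the ambient normed space $Y$ together with the standing hypothesis \eqref{lem:ineq:Fineq}. For any admissible chain,
\begin{equation*}
\|x_A-x_B\|\;\le\;\sum_{k=1}^{N}\|x_k-x_{k-1}\|\;\le\;C\sum_{k=1}^{N}F(x_k,x_{k-1}),
\end{equation*}
and taking the infimum over admissible chains gives $\|x_A-x_B\|\le C\,d(x_A,x_B)$. Combined with the previous paragraph this yields \eqref{lem:genMetricIneq}, and the identity of indiscernibles follows at once: if $d(x_A,x_B)=0$ then $\|x_A-x_B\|=0$, so $x_A=x_B$. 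Hence $d$ is a metric.

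Finally, for the pseudo‑metric statement, observe that none of the arguments for non‑negativity, symmetry, vanishing on the diagonal, or the triangle inequality used the hypothesis \eqref{lem:ineq:Fineq}; only the implication $d(x_A,x_B)=0\Rightarrow x_A=x_B$ did. Dropping \eqref{lem:ineq:Fineq} therefore leaves $d$ with all pseudo‑metric properties intact, but no longer forces distinct points to have positive distance. The only mildly subtle point in the whole proof is the chain‑concatenation step for the triangle inequality, and even that is routine; I do not expect any real obstacle.
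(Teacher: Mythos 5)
Your proposal is correct and follows essentially the same route as the paper's proof: the length-one chain for the upper bound, chain concatenation for the triangle inequality, and the ambient-norm estimate $\|x_A-x_B\|\le C\sum_k F(x_k,x_{k-1})$ over chains to get the lower bound and hence the identity of indiscernibles. The only cosmetic difference is that you pass the estimate to the infimum directly where the paper uses an explicit $\eps$-optimal chain; these are interchangeable.
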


\begin{proof} Symmetry is immediate from the assumptions, as well as the fact that if $x_A = x_B$, then $d(x_A, x_B) = 0$.
We begin by showing if $d(x_A, x_B) =0$, then $x_A = x_B$. Let $\eps>0$. Choose a sequence $\{x_k\}_{k=0}^N$ such that
\[
	\sum_{k=1}^N F(x_k, x_{k-1}) \leq d(x_A, x_B)  + \eps.
\]
Then, by our assumption
\[
	\|x_A - x_B\| 
	\leq \sum_{k=1}^N \|x_k - x_{k-1}\| 
	\leq \sum_{k=1}^N C F(x_k, x_{k-1}) \leq C d(x_A, x_B) + C \eps.
\]
This inequality is satisfied for any $\eps>0$, hence
\[
	\|x_A - x_B\| \leq C d(x_A, x_B),
\]
and so, if $d(x_A, x_B) = 0$, $\|x_A - x_B\|=0$. Thus $x_A = x_B$ as required.

The right hand estimate of \eqref{lem:genMetricIneq} is obtained immediately by considering the sequence $x_0 = x_A$ and $x_1 = x_B$ in the definition of $d$.

Next, we have the triangle inequality. Consider $x_A, x_B, x_C \in X$, and let $ \eps > 0 $. Take two sequences, $\{x_k\}_{k=0}^N$ and $ \{x_k\}_{k=N}^M $, with $M>N$, $x_0 = x_A$, $x_N = x_B$ and $x_M = x_C$, such that
\[
	\sum_{k=1}^N F(x_k, x_{k-1}) \leq d(x_A, x_B)  + \eps.
\]     
and
\[
	\sum_{k=N+1}^M F(x_k, x_{k-1}) \leq d(x_B, x_C)  + \eps.
\]
Then
\begin{align*}
	d(x_A, x_C) 
	&\leq \sum_{k=1}^M F(x_k, x_{k-1})\\
	&\leq \sum_{k=1}^N F(x_k, x_{k-1}) + \sum_{k=N+1}^M F(x_k, x_{k-1})\\
	&\leq d(x_A, x_B) + d(x_B, x_C) + 2\eps.
\end{align*}
Hence, as this construction can be done for any $\eps>0$, we have
\[
	d(x_A, x_B) \leq d(x_A, x_B) + d(x_B, x_C),
\]
as required.
\end{proof}
\section{Examples}
We now explore some examples to demonstrate notable details about the constructed metric.

To begin, we note a limitation or advantage of our metric, dependent on ones perspective. Specifically, the role the $\alpha$ plays in the solution is dependent on whether wave breaking actually occurs. Due to the difference of the $\alpha$ measured in our metric, this means one could have a positive distance even if the $u$'s and $\mu$'s are the same for all time.

\begin{exmp}\label{exmp:adiss}
Consider the initial data
\[
	u_0(x) = 
	\begin{cases}
		1 ,&\mbox\ x\leq -2 ,\\
		-1-x ,&\mbox\ -2 < x\leq -1 ,\\
		0 ,&\mbox\ -1 < x\leq 1 ,\\
		1-x ,&\mbox\ 1 < x\leq 2 ,\\
		-1 ,&\mbox\ 2<x,
	\end{cases}
	\quad\
	\mu_0=\nu_0=u_{0,x}^2(x)\, dx,
\]
and from this we can calculate the cumulative energy,
\[
	\mu_0((-\infty,x))=\nu_0((-\infty,x))=
	\begin{cases}
		0, &\mbox\ x\leq -2,\\
		2+x, &\mbox\ -2 < x\leq -1,\\
		1, &\mbox\ -1 < x\leq 1,\\
		x, &\mbox\ 1 < x\leq 2,\\
		2, &\mbox\ 2 < x.
	\end{cases}
\]

Let $\alpha_A\equiv \frac13$, as in Example A.1 in \cite{grunert2021lipschitz}, and $\alpha_B:\R \to [0, 1)$ such that $\alpha_B(1) = \alpha_B(-1) = \frac{1}{3}$, but $\alpha_B \neq \alpha_A$. 

Transforming, using the mapping $ \Lag $ from Definition \eqref{map:EultoLag}, we obtain the initial data in Lagrangian coordinates,
\[
	y_0(\xi) \coloneqq
	\begin{cases}
		\xi ,&\mbox\ \xi\leq -2 ,\\
		-1+\frac{1}{2}\xi ,&\mbox\ -2 < \xi\leq 0 ,\\
		-1+\xi ,&\mbox\ 0 < \xi\leq 2 ,\\
		\frac{1}{2}\xi ,&\mbox\ 2 < \xi\leq 4 ,\\
		-2+\xi ,&\mbox\ 4 < \xi,
	\end{cases}
	\quad\
	U_0(\xi) =
	\begin{cases}
		1 ,&\mbox\ \xi\leq -2 ,\\
		-\frac{1}{2}\xi ,&\mbox\ -2 < \xi\leq 0 ,\\
		0 ,&\mbox\ 0 < \xi\leq 2 ,\\
		1-\frac{1}{2}\xi ,&\mbox\ 2 < \xi\leq 4 ,\\
		-1 ,&\mbox\ 4< \xi,
	\end{cases}
\]
and
\[
	V_0(\xi)=H_0(\xi) =
	\begin{cases}
		0 ,&\mbox\ \xi\leq -2 ,\\
		1+\frac{1}{2}\xi ,&\mbox\ -2 < \xi\leq 0 ,\\
		1 ,&\mbox\ 0 < \xi\leq 2 ,\\
		\frac{1}{2}\xi ,&\mbox\ 2 < \xi\leq 4 ,\\
		2 ,&\mbox\ 4 < \xi.
	\end{cases}
\]
Determining the wave breaking times using \eqref{def:tau}, we get
\[
	\tau(\xi) =
	\begin{cases}
		2 ,&\mbox\ \xi\in (-2,0)\cup (2,4) ,\\
		+\infty ,&\mbox\ \text{otherwise}.
	\end{cases}
\]
We can then calculate the solution using the ODE system \eqref{eqn:LagSys}, and one obtains for either $\alpha_A$ or $\alpha_B$ that
\[
	y(\xi, t) =
		\begin{cases}
			\begin{cases}
		 t - \frac{1}{4}t^2+\xi , &\mbox\ \xi \leq -2,\\
				-1 +\frac18(t-2)^2\xi, &\mbox\ -2 <  \xi \leq 0,\\
    - 1+\xi, &\mbox\ 0 < \xi \leq 2,\\
				t-\frac14 t^2+\frac18(t-2)^2\xi, &\mbox\ 2 < \xi \leq 4,\\
				-2 -t + \frac{1}{4}t^2+\xi, &\mbox\ 4<\xi,\\
			\end{cases} 
			&\mbox\ 0\leq t<2,\\
			\begin{cases}
		 \frac{1}{3}+\frac{2}{3}t-\frac16 t^2+\xi, &\mbox\ \xi \leq -2,\\
				-1 + \frac1{12}(t-2)^2\xi, &\mbox\ -2 < \xi \leq 0,\\
				- 1+\xi &\mbox\ 0 < \xi \leq 2,\\
				\frac13 +\frac23 t -\frac16 t^2+\frac1{12}(t-2)^2\xi, &\mbox\ 2 < \xi \leq 4,\\
				- \frac{7}{3} - \frac{2}{3}t+\frac16 t^2+\xi, &\mbox\ 4< \xi,\\
			\end{cases}
			&\mbox\ 2\leq t,
		\end{cases}
\]
\[
	U(\xi, t) =
	\begin{cases}
		\begin{cases}
			1-\frac{1}{2}t, &\mbox\ \xi \leq -2,\\
			\frac14(t-2)\xi, &\mbox\ -2 < \xi \leq 0,\\
			0, &\mbox\ 0 < \xi \leq 2,\\
			1-\frac12 t+\frac14(t-2)\xi, &\mbox\ 2 < \xi \leq 4,\\
			-1+\frac{1}{2}t, &\mbox\ 4 < \xi,\\
		\end{cases} 
		&\mbox\ 0\leq t<2,\\
		\begin{cases}
			\frac{2}{3}-\frac{1}{3}t, &\mbox\ \xi \leq -2,\\
			\frac16(t-2)\xi, &\mbox\ -2 < \xi \leq 0,\\
			0, &\mbox\ 0 < \xi \leq 2,\\
			\frac23 -\frac13 t+\frac16(t-2)\xi, &\mbox\ 2 < \xi \leq 4,\\
			-\frac{2}{3}+ \frac{1}{3}t, &\mbox\ 4 < \xi,\\
		\end{cases}
		&\mbox\ 2\leq t,
	\end{cases}
\]

\[ H(\xi,t)=H_0(\xi), \quad 0\leq t,\]
and 
\[
	V(\xi, t) =
	\begin{cases}
		H(\xi), &\mbox\ 0\leq t<2,\\
		\begin{cases}
			0, &\mbox\ \xi \leq -2,\\
			\frac{2}{3}+\frac{1}{3}\xi, &\mbox\ -2 < \xi \leq 0,\\
			\frac{2}{3}, &\mbox\ 0 < \xi \leq 2,\\
			\frac{1}{3}\xi, &\mbox\ 2 < \xi \leq 4,\\
			\frac{4}{3}, &\mbox\ 4 < \xi,\\
		\end{cases}
		&\mbox\ 2\leq t.
	\end{cases}
\]

\begin{figure}
\centering
\includegraphics[scale=0.4]{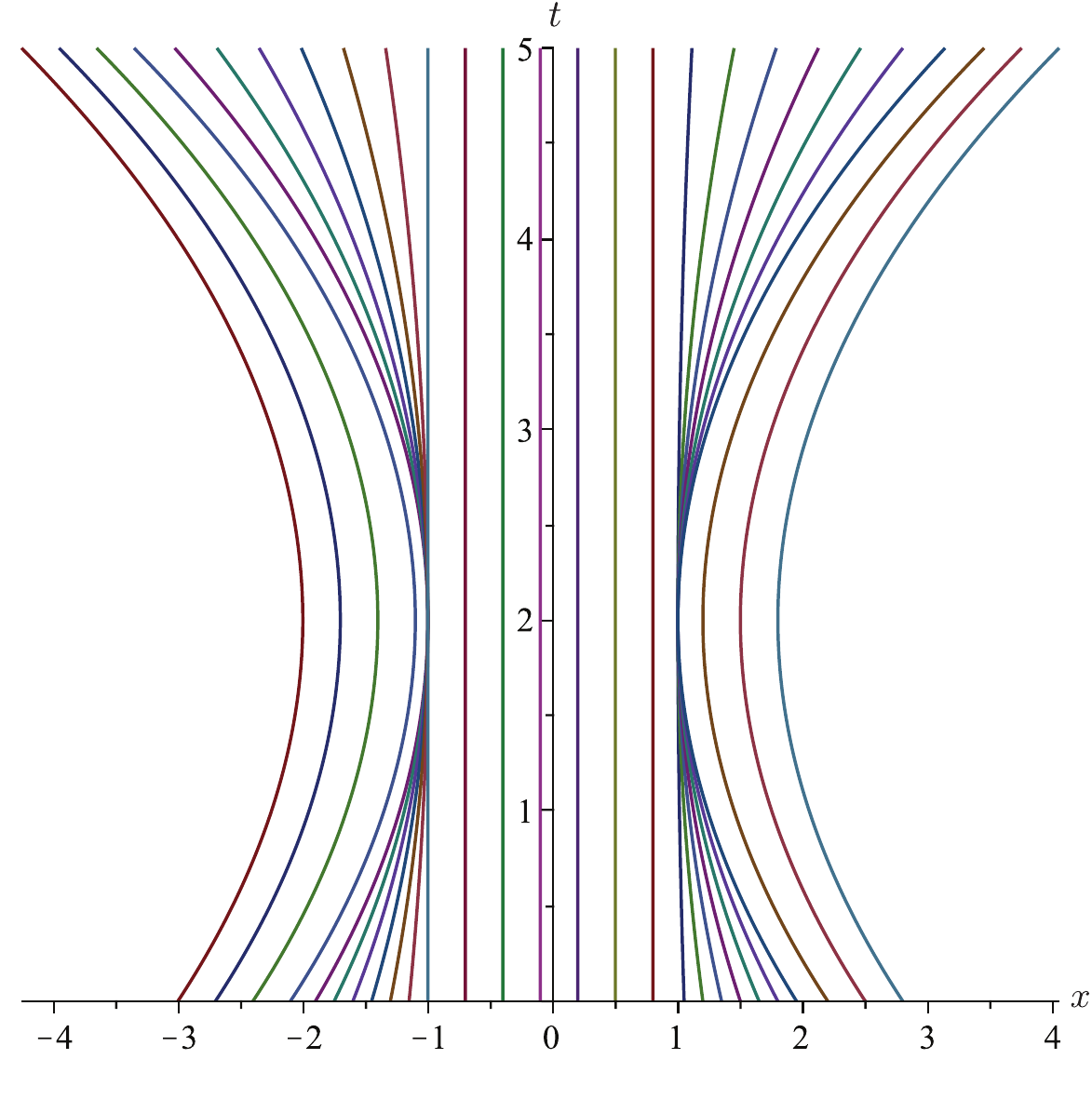}
\caption{
	Plot of the characteristics $y(\xi,t)$ for different values of $\xi$.
	Note the concentration of characteristics at the wave breaking
	time $t=2$, and the subsequent spreading due to only partial
	energy loss.
}
\label{fig:ex2y}
\end{figure}

See Figure \ref{fig:ex2y} for a plot of the characteristics $y$.
\end{exmp}

This example demonstrates that the choice of the metric plays an important role when comparing two solutions. These two solutions remain the same for all time. However the distance given in our metric, constructed using \eqref{eqn:D}, will be positive, as $\alpha_A \neq \alpha_B$. 

This phenomenon occurs if, at points $x \in \R$ where wave breaking occurs, $\alpha_A(x) = \alpha_B(x)$. Or in other words, replacing $\alpha_A$ by $\alpha_B$ or vice versa has no impact on the solutions in that case. 
Therefore, one could argue that following our construction with $D$, given by~\eqref{eqn:D}, replaced by 
\begin{equation*}
	\hat{D}(X_A, X_B)
	= D(X_A^{\alpha_A}, X_B^{\alpha_B}) - \|\alpha_A - \alpha_B\|_\infty,
\end{equation*}
might be more appropriate for certain purposes.

\vspace{0.3cm}

In the next example, we demonstrate why we restrict ourselves from choosing $\alpha:\R \to [0, 1]$, i.e. such that points of wave breaking can be fully dissipative and other points can be partially dissipative or conservative. 
\begin{exmp}\label{exmp:alphFn1}
We consider as initial data,
\begin{equation}
	u_0(x) =
	\begin{cases}
		1, &\mbox\ x\leq 0,\\
		1-x, &\mbox\ 0 < x\leq \hlf,\\
		\frac{3}{2} - 2x, &\mbox\ \hlf < x\leq 1,\\
		-\hlf, &\mbox\ 1<x,
	\end{cases}
	\quad\
	\mu_0 = \nu_0 = u_{0,x}^2\, dx,
\end{equation}
and assume the following values of $\alpha:[0, 1)\to\R$:
\[
	\alpha\left(\frac{13}{16}\right) = 1\quad  \text{ and }\quad \alpha\left(1\right) = \hlf.
\]
The points here are chosen tactically to be where wave breaking occurs in the future.

We begin by calculating the cumulative energy function. We have
\[
	u_{0,x}^2(x) = 
	\begin{cases}
		0, &\mbox\ x\leq 0,\\
		1, &\mbox\ 0 < x\leq \hlf,\\
		4, &\mbox\ \hlf < x\leq 1,\\
		0, &\mbox\ 1<x,
	\end{cases}
\]
and
\[
	\mu_0 \big((-\infty, x)\big) = 
	\nu_0 \big((-\infty, x)\big) = 
	\begin{cases}
		0, &\mbox\ x\leq 0,\\
		x, &\mbox\ 0 < x\leq \hlf,\\
		- \frac{3}{2} + 4x, &\mbox\ \hlf < x\leq 1,\\
		\frac{5}{2}, &\mbox\ 1<x.
	\end{cases}
\]

Thus, using the transformation $\Lag$ from Definition \ref{map:EultoLag},
\begin{equation*}
	y_0(\xi) =
	\begin{cases}
		\xi, &\mbox\ \xi \leq 0,\\
		\frac{1}{2}\xi, &\mbox\ 0 < \xi \leq 1,\\
		\frac{3}{10} + \frac{1}{5}\xi, &\mbox\ 1 < \xi \leq \frac{7}{2},\\
		\xi - \frac{5}{2}, &\mbox\  \frac{7}{2} < \xi,
	\end{cases}
	\quad\ 
	U_0(\xi) =
	\begin{cases}
		1, &\mbox\ \xi \leq 0,\\
		1 - \frac{1}{2}\xi, &\mbox\ 0 < \xi \leq 1,\\
		\frac{9}{10} - \frac{2}{5}\xi, &\mbox\ 1 < \xi \leq \frac{7}{2},\\
		-\hlf, &\mbox\ \frac{7}{2} < \xi,
	\end{cases}
\end{equation*}
and
\begin{equation}
	H_0(\xi) = V_0(\xi) = 
	\begin{cases}
		0, &\mbox\ \xi \leq 0,\\
		\frac{1}{2}\xi, &\mbox\ 0 < \xi \leq 1,\\
		 - \frac{3}{10} + \frac{4}{5}\xi, &\mbox\ 1 < \xi \leq \frac{7}{2},\\
		\frac{5}{2}, &\mbox\ \frac{7}{2} < \xi.
	\end{cases}
\end{equation}
Thus, we can calculate the times at which wave breaking occurs. Using \eqref{def:tau}, 
\[
	\tau(\xi) = \begin{cases}
		2, &\mbox\ \xi\in(0,1),\\
		1, &\mbox\ \xi\in(1, \frac72),\\
		+\infty, &\mbox\ \text{otherwise}.
	\end{cases}
\]

With everything in place, we can solve the ODE system \eqref{eqn:LagSys}, giving
\begin{equation}
	y(\xi, t) =
	\begin{cases}
		\begin{cases}
			 -\frac{5}{16}t^2 + t + \xi, 
				&\mbox\ \xi \leq 0,\\
			 - \frac{5}{16}t^2 + t + \frac{1}{8}(t - 2)^2\xi, 
				&\mbox\ 0 < \xi \leq 1,\\
			\frac{3}{10} + \frac{9}{10}t - \frac{31}{80}t^2 + \frac{1}{5}(t-1)^2\xi
				&\mbox\ 1 < \xi \leq \frac{7}{2},\\
			- \frac{5}{2} - \hlf t + \frac{5}{16}t^2 + \xi,
				&\mbox\ \frac{7}{2} < \xi,
		\end{cases}
		&\mbox\ 0 \leq t < 1,\\
		\begin{cases}
			\frac{1}{4} + \frac{1}{2}t -\frac{1}{16}t^2 + \xi,
				&\mbox\ \xi \leq 0,\\
			\frac{1}{4} + \frac{1}{2} t - \frac{1}{16} t^2 + \frac{1}{8}(t-2)^2\xi ,
				&\mbox\ 0 < \xi \leq 1,\\
			\frac{3}{4} + \frac{1}{16}t^2, 
				&\mbox\ 1 < \xi \leq \frac{7}{2},\\
			- \frac{11}{4} + \frac{1}{16}t^2 + \xi , 
				&\mbox\ \frac{7}{2} < \xi,
		\end{cases}
		&\mbox\ 1 \leq t < 2,\\
		\begin{cases}
			\frac{3}{8} + \frac{3}{8}t - \frac{1}{32}t^2 + \xi,
				&\mbox\ \xi \leq 0,\\
			\frac{3}{8} + \frac{3}{8} t - \frac{1}{32} t^2 + \frac{1}{16}(t-2)^2\xi,
				&\mbox\ 0 < \xi \leq 1,\\
			\frac{5}{8} + \frac{1}{8}t + \frac{1}{32}t^2,
				&\mbox\ 1 < \xi \leq \frac{7}{2},\\
			- \frac{23}{8} + \frac{1}{8}t + \frac{1}{32}t^2 + \xi,
				&\mbox\ \frac{7}{2} < \xi,
		\end{cases}
		&\mbox\ 2 \leq t,
	\end{cases}
\end{equation}
\begin{equation}
	U(\xi, t) =
	\begin{cases}
		\begin{cases}
			1 -\frac{5}{8}t,
				&\mbox\ \xi \leq 0,\\
			1 - \frac{5}{8}t + \frac14(t -2)\xi,
				&\mbox\ 0 < \xi \leq 1,\\
			\frac25(t -1)\xi + \frac{9}{10} - \frac{31}{40}t,
				&\mbox\ 1 < \xi \leq \frac{7}{2},\\
			- \frac{1}{2} + \frac{5}{8}t, 
				&\mbox\  \frac{7}{2} < \xi,
		\end{cases}
		&\mbox\ 0 \leq t < 1,\\
		\begin{cases}
			\frac{1}{2} -\frac{1}{8}t, 
				&\mbox\ \xi \leq 0,\\
			\frac{1}{2} - \frac{1}{8}t + \frac{1}{4}(t - 2)\xi, 
				&\mbox\ 0 < \xi \leq 1,\\
			\frac{1}{8}t, 
				&\mbox\ 1 < \xi,
		\end{cases}
		&\mbox\ 1 \leq t < 2,\\
		\begin{cases}
			\frac{3}{8} -\frac{1}{16}t,
				&\mbox\ \xi \leq 0,\\
			\frac{3}{8} - \frac{1}{16}t + \frac{1}{8}(t - 2)\xi, 
				&\mbox\ 0 < \xi \leq 1,\\
			\frac{1}{8} + \frac{1}{16}t, 
				&\mbox\ 1 < \xi,
		\end{cases}
		&\mbox\ 2 \leq t,
	\end{cases}
\end{equation}
\begin{equation*}
	H(\xi, t) = H_0(\xi), \quad\ 0\leq t, 
\end{equation*}
and
\begin{equation}
	V(\xi, t) =
	\begin{cases}
		H(\xi)
		&\mbox\ 0 \leq t < 1,\\
		\begin{cases}
			0, 
				&\mbox\ \xi \leq 0,\\
			\frac{1}{2}\xi, 
				&\mbox\ 0 < \xi \leq 1,\\
			\hlf, 
				&\mbox\ 1 < \xi,
		\end{cases}
		&\mbox\ 1 \leq t < 2,\\
		\begin{cases}
			0, 
				&\mbox\ \xi \leq 0,\\
			\frac{1}{4}\xi, 
				&\mbox\ 0 < \xi \leq 1,\\
			\frac{1}{4}, 
				&\mbox\ 1 < \xi,
		\end{cases}
		&\mbox\ 2 \leq t.
	\end{cases}
\end{equation}

\begin{figure}
\centering
\includegraphics[scale=0.4]{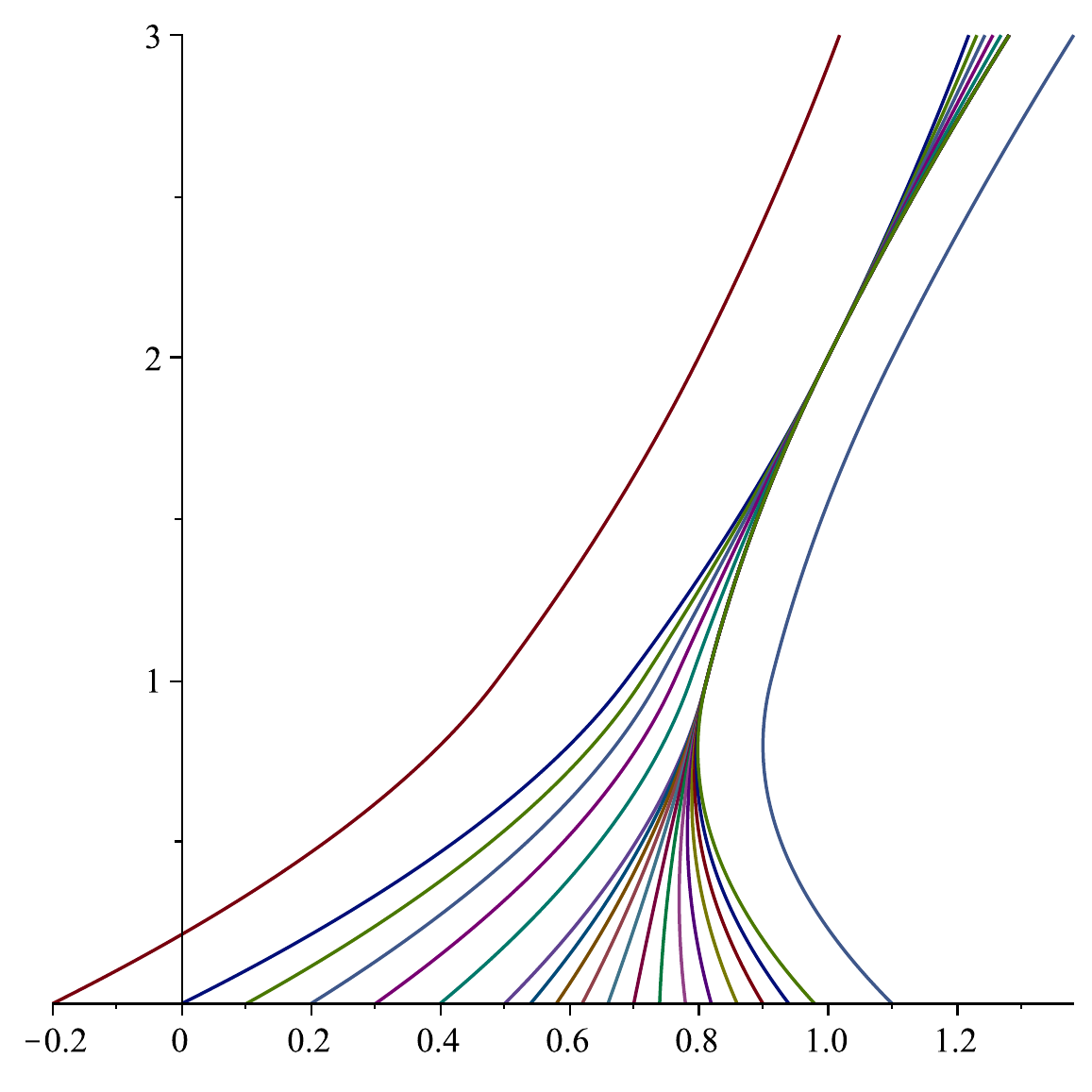}
\caption{
	Plot of the characteristics $y(t,\xi)$
	for different values of $\xi$. 
	In comparison to Figure \ref{fig:ex2y}, there are now two
	wave-breaking times, with the first corresponding to
	full energy dissipation, thus no fan is released,
	and the second to half the energy being lost.
}
\label{fig:ex3y}
\end{figure}

We can transform back into Eulerian coordinates using the mapping $M$ from Definition \ref{map:LagtoEul}, giving at $t = 2$,
\begin{equation}
	\nu \big((-\infty, x), 2\big) = \begin{cases}
		0, &\mbox\ x \leq 1,\\
		\frac{5}{2}, &\mbox\ 1 < x,
	\end{cases}
\end{equation}
\begin{equation}
	\mu \big((-\infty, x), 2\big) = \begin{cases}
		0, &\mbox\ x \leq 1,\\
		\frac{1}{4}, &\mbox\ 1 < x,
	\end{cases}
\end{equation}
and
\begin{equation}
	u(x, 2) = \frac{1}{4}.
\end{equation}

Transforming back to Lagrangian coordinates, setting 
\[
	\bar{X} 
	\coloneqq \Lag\left(u(\cdot, 2), \mu(\cdot, 2), \nu(\cdot ,2)\right),
\] 
we obtain
\begin{equation}
	\bar{y}(\xi) = \begin{cases}
		\xi, &\mbox\ \xi \leq 1,\\
		1, &\mbox\ 1 < \xi \leq \frac{7}{2},\\
		- \frac{5}{2} + \xi, &\mbox\ \frac{7}{2} < \xi,
	\end{cases}
	\quad\
	\bar{U}(\xi) = \frac{1}{4},
\end{equation}
\begin{equation}
	\bar{H}(\xi) = \begin{cases}
		0, &\mbox\ \xi \leq 1,\\
		-1 + \xi, &\mbox\ 1 < \xi \leq \frac{7}{2},\\
		\frac{5}{2}, &\mbox\ \frac{7}{2} < \xi,
	\end{cases}
\end{equation}
and
\begin{equation}
	\bar{V}(\xi) = \begin{cases}
		0, &\mbox\ \xi \leq 1,\\	
		- \frac{1}{10} + \frac{1}{10}\xi, &\mbox\ 1 < \xi \leq \frac{7}{2},\\
		\frac{1}{4}, &\mbox\ \frac{7}{2} < \xi.
	\end{cases}
\end{equation}
And finally we can observe the issue. After transforming to Eulerian coordinates and back, the Lagrangian coordinates are no longer connected by a relabelling function. 

Indeed, one sees that in constructing an $f \in \mathcal{G}$ such that
$\bar{y} \circ f = y(\cdot, 2)$ and $\bar{V}\circ f = V(\cdot, 2)$,
that one must have
\[
	f(\xi) = \begin{cases}
		1 + \xi, &\mbox\ \xi \leq 0,\\
		1 + \frac{5}{2}\xi, &\mbox\ 0 < \xi \leq 1,\\
		\frac{7}{2}, &\mbox\ 1 < \xi \leq \frac{7}{2},\\
		\xi, &\mbox\ \frac{7}{2} < \xi,
	\end{cases}
\]
however, $\bar{H} \circ f \neq H$ and $f\not\in \mathcal{G}$. 
\end{exmp}

In this final example we demonstrate that the choice of $\nu$ has no affect on the final
solution.
\begin{exmp}
Consider as initial data
\begin{equation*}
	u_0(x) = 
	\begin{cases}
		1,	&\mbox\ x \leq -1,\\
		-x, &\mbox\ -1 < x \leq 0, \\
		x, 	&\mbox\ 0 < x \leq 1, \\
		1, 	&\mbox\ 1 < x,
	\end{cases}
\end{equation*}
with
\begin{equation*}
	\mu_0 = u_{0,x}^2\ dx + \delta_{-\hlf} + \delta_{\hlf},
	\quad\ \text{and} \quad\
	\nu_0 = \mu_0 + 3\bbo_{(0,1]} u_{0,x}^2\ dx + \delta_{\hlf}.
\end{equation*}
In this example we consider $\alpha = \hlf$ and drop it from the notation of
coordinates for simplicity.
Set $X_{A,0} = \Lag(u_0, \mu_0, \mu_0)$ and $X_{B,0} = \Lag(u_0, \mu_0, \nu_0)$.
Then
\begin{equation*}
	y_{A,0}(\xi)
	=
	\begin{cases}
		\xi, &\mbox\ \xi \leq -1, \\ 
		- \frac{1}{2} + \frac{1}{2}\xi, &\mbox\ -1 < \xi \leq 0, \\
		- \frac{1}{2}, &\mbox\ 0 < \xi \leq 1, \\
		-1 + \frac{1}{2}\xi, &\mbox\ 1 < \xi \leq 3, \\
		\frac{1}{2}, &\mbox\ 3 < \xi \leq 4, \\
		- \frac{3}{2} + \frac{1}{2}\xi, &\mbox\ 4 < \xi \leq 5, \\
		-4 + \xi, &\mbox\ 5 < \xi,
	\end{cases}
	\quad\
	y_{B,0}(\xi)
	=
	\begin{cases}
		\xi, &\mbox\ \xi \leq -1, \\ 
		- \frac{1}{2} + \frac{1}{2}\xi, &\mbox\ -1 < \xi \leq 0, \\
		- \frac{1}{2}, &\mbox\ 0 < \xi \leq 1, \\
		-1 + \frac{1}{2}\xi, &\mbox\ 1 < \xi \leq 2, \\
		-\frac{2}{5} + \frac{1}{5} \xi, &\mbox\ 2 < \xi \leq \frac{9}{2}, \\
		\frac{1}{2}, &\mbox\ \frac{9}{2} < \xi \leq \frac{13}{2}, \\
		-\frac{4}{5} + \frac{1}{5} \xi, &\mbox\ \frac{13}{2} < \xi \leq 9, \\
		-8 + \xi, &\mbox\ 9 < \xi, 
	\end{cases}
\end{equation*}
\begin{equation*}
	U_{A,0}(\xi)
	=
	\begin{cases}
		1, &\mbox\ \xi \leq -1, \\ 
		\frac{1}{2} - \frac{1}{2}\xi, &\mbox\ -1 < \xi \leq 0, \\
		\frac{1}{2}, &\mbox\ 0 < \xi \leq 1, \\
		1 - \frac{1}{2}\xi, &\mbox\ 1 < \xi \leq 2, \\
		-1 + \frac{1}{2}\xi, &\mbox\ 2 < \xi \leq 3, \\
		\frac{1}{2}, &\mbox\ 3 < \xi \leq 4, \\
		- \frac{3}{2} + \frac{1}{2}\xi, &\mbox\ 4 < \xi \leq 5, \\
		1, &\mbox\ 5 < \xi,
	\end{cases}
	\quad\
	U_{B,0}(\xi)
	=
	\begin{cases}
		1, &\mbox\ \xi \leq -1, \\ 
		\frac{1}{2} - \frac{1}{2}\xi, &\mbox\ -1 < \xi \leq 0, \\
		\frac{1}{2}, &\mbox\ 0 < \xi \leq 1, \\
		1 - \frac{1}{2}\xi, &\mbox\ 1 < \xi \leq 2, \\
		- \frac{2}{5} + \frac{1}{5}\xi, &\mbox\ 2 < \xi \leq \frac{9}{2}, \\
		\frac{1}{2}, &\mbox\ \frac{9}{2} < \xi \leq \frac{13}{2}, \\
		- \frac{4}{5} + \frac{1}{5}\xi, &\mbox\ \frac{13}{2} < \xi \leq 9, \\
		1, &\mbox\ 9 < \xi, 
	\end{cases}
\end{equation*}
\begin{equation*}
	H_{A,0}(\xi)
	=
	\begin{cases}
		0, &\mbox\ \xi \leq -1, \\ 
		\frac{1}{2} + \frac{1}{2}\xi, &\mbox\ -1 < \xi \leq 0, \\
		\frac{1}{2} + \xi, &\mbox\ 0 < \xi \leq 1, \\
		1 + \frac{1}{2}\xi, &\mbox\ 1 < \xi \leq 3, \\
		- \frac{1}{2} + \xi, &\mbox\ 3 < \xi \leq 4, \\
		\frac{3}{2} + \frac{1}{2}\xi, &\mbox\ 4 < \xi \leq 5, \\
		4, &\mbox\ 5 < \xi, \\
	\end{cases}
	\quad\
	H_{B,0}(\xi)
	=
	\begin{cases}
		0, &\mbox\ \xi \leq -1, \\ 
		\frac{1}{2} + \frac{1}{2}\xi, &\mbox\ -1 < \xi \leq 0, \\
		\frac{1}{2} + \xi, &\mbox\ 0 < \xi \leq 1, \\
		1 + \frac{1}{2}\xi, &\mbox\ 1 < \xi \leq 2, \\
		\frac{2}{5} + \frac{4}{5}\xi, &\mbox\ 2 < \xi \leq \frac{9}{2}, \\
		- \frac{1}{2} + \xi, &\mbox\ \frac{9}{2} < \xi \leq \frac{13}{2}, \\
		\frac{4}{5} + \frac{4}{5}\xi, &\mbox\ \frac{13}{2} < \xi \leq 9, \\
		8, &\mbox\ 9 < \xi, 
	\end{cases}
\end{equation*}
and
\begin{equation*}
	V_{A,0}(\xi) = H_{A,0}(\xi),
	\quad\
	V_{B,0}(\xi)
	=
	\begin{cases}
		0, &\mbox\ \xi \leq -1, \\ 
		\frac{1}{2} + \frac{1}{2}\xi, &\mbox\ -1 < \xi \leq 0, \\
		\frac{1}{2} + \xi, &\mbox\ 0 < \xi \leq 1, \\
		1 + \frac{1}{2}\xi, &\mbox\ 1 < \xi \leq 2, \\
		\frac{8}{5} + \frac{1}{5}\xi, &\mbox\ 2 < \xi \leq \frac{9}{2}, \\
		\frac{1}{4} + \frac{1}{2} \xi, &\mbox\ \frac{9}{2} < \xi \leq \frac{13}{2}, \\
		\frac{11}{5} + \frac{1}{5}\xi, &\mbox\ \frac{13}{2} < \xi \leq 9, \\
		4, &\mbox\ 9 < \xi.
	\end{cases}
\end{equation*}
Calculating $\tau(\xi)$ via~\eqref{def:tau} for both sets of initial data, one finds
\begin{equation*}
\tau_A(\xi)=\tau_B(\xi)=\begin{cases} 2, & \xi  \in (-1, 0) \cup (1, 2),\\
+\infty, &\text{otherwise}.
\end{cases}
\end{equation*}

Solving the ODE system~\eqref{eqn:LagSys} with this initial data, one finds
\begin{equation*}
	V_A(\xi, t) =
	\begin{cases}
		V_{A,0}(\xi), &\mbox\ t < 2 \\
		\begin{cases}
			0, &\mbox\ \xi \leq -1, \\ 
			\frac{1}{4} + \frac{1}{4}\xi, &\mbox\ -1 < \xi \leq 0, \\
			\frac{1}{4} + \xi, &\mbox\ 0 < \xi \leq 1, \\
			1 + \frac{1}{4}\xi, &\mbox\ 1 < \xi \leq 2, \\
			\frac{1}{2} + \frac{1}{2}\xi, &\mbox\ 2 < \xi \leq 3, \\
			-1 + \xi, &\mbox\ 3 < \xi \leq 4,\\
			1 + \frac{1}{2} \xi, &\mbox\ 4 < \xi \leq 5,\\
			\frac{7}{2}, &\mbox\ 5 < \xi,
		\end{cases}
									&\mbox\ 2 \leq t.
	\end{cases}
\end{equation*}
\begin{equation}
	U_A(\xi,t) =
	\begin{cases}
		\begin{cases}
			1 - t,
				& \xi \leq -1, \\ 
			\frac{1}{2} - \frac{3}{4} t + \frac{1}{4} (t-2) \xi,
				& -1 < \xi \leq 0, \\
			\frac{1}{2} - \frac{3}{4} t + \frac{1}{2} t \xi,
				& 0 < \xi \leq 1, \\
			1 - \frac{1}{2} t + \frac{1}{4} (t-2) \xi,
				& 1 < \xi \leq 2, \\
			-1 - \frac{1}{2} t + \frac{1}{4} (t+2) \xi,
				& 2 < \xi \leq 3, \\
			\frac{1}{2} - \frac{5}{4} t + \frac{1}{2} t \xi,
				& 3 < \xi \leq 4, \\
			-\frac{3}{2} - \frac{1}{4} t + \frac{1}{4} (t+2) \xi,
				& 4 < \xi \leq 5, \\
			1 + t, & 5 < \xi,
		\end{cases}
			&\mbox\ t < 2,\\
		\begin{cases}
			\frac{3}{4} - \frac{7}{8} t,
				& \xi \leq -1, \\ 
			\frac{1}{2} - \frac{3}{4} t + \frac{1}{8} (t-2) \xi,
				& -1 < \xi \leq 0, \\
			\frac{1}{2} - \frac{3}{4} t + \frac{1}{2} t \xi,
				& 0 < \xi \leq 1, \\
			\frac{3}{4} - \frac{3}{8} t + \frac{1}{8} (t-2) \xi,
				& 1 < \xi \leq 2, \\
			-\frac{3}{4} - \frac{5}{8} t + \frac{1}{4}(t+2) \xi,
				& 2 < \xi \leq 3, \\
			\frac{3}{4} - \frac{11}{8} t + \frac{1}{2} t \xi,
				& 3 < \xi \leq 4, \\
			-\frac{5}{4} - \frac{3}{8} t + \frac{1}{4} (t+2) \xi,
				& 4 < \xi \leq 5, \\
			\frac{5}{4} + \frac{7}{8} t,
				& 5 < \xi,
		\end{cases}
			&\mbox\ 2 \leq t,\\
	\end{cases}
\end{equation}
\begin{equation}
	y_A(\xi,t) =
	\begin{cases}
		\begin{cases}
			t - \frac{1}{2}t^2 + \xi,
				& \xi \leq -1, \\ 
			- \frac{1}{2} + \frac{1}{2}t - \frac{3}{8}t^2 + \frac{1}{8} (t-2)^2 \xi,
				& -1 < \xi \leq 0, \\
			- \frac{1}{2}  + \frac{1}{2} t - \frac{3}{8} t^2 + \frac{1}{4} t^2 \xi,
				& 0 < \xi \leq 1, \\
			-1 + t - \frac{1}{4} t^2 + \frac{1}{8} (t-2)^2 \xi,
				& 1 < \xi \leq 2, \\
			-1 - t - \frac{1}{4} t^2 + \frac{1}{8} (t+2)^2 \xi,
				& 2 < \xi \leq 3, \\
			\frac{1}{2} + \frac{1}{2} t - \frac{5}{8} t^2 + \frac{1}{4} t^2 \xi,
				& 3 < \xi \leq 4, \\
			-\frac{3}{2} - \frac{3}{2} t - \frac{1}{8} t^2 + \frac{1}{8} (t+2)^2 \xi,
				& 4 < \xi \leq 5, \\
			-4 + t + \frac{1}{2} t^2 + \xi,
				& 5 < \xi,
		\end{cases}
			&\mbox\ t < 2,\\
		\begin{cases}
			\frac{1}{4} + \frac{3}{4} t - \frac{7}{16} t^2 + \xi,
				& \xi \leq -1, \\ 
			- \frac{1}{2} + \frac{1}{2} t - \frac{3}{8} t^2 + \frac{1}{16} (t-2)^2 \xi,
				& -1 < \xi \leq 0, \\
			- \frac{1}{2} + \frac{1}{2} t - \frac{3}{8} t^2 + \frac{1}{4} t^2 \xi,
				& 0 < \xi \leq 1, \\
			- \frac{3}{4} + \frac{3}{4} t - \frac{3}{16} t^2 + \frac{1}{16} (t-2)^2 \xi,
				& 1 < \xi \leq 2, \\
			- \frac{5}{4} - \frac{3}{4} t - \frac{5}{16} t^2 + \frac{1}{8} (t+2)^2 \xi,
				& 2 < \xi \leq 3, \\
			\frac{1}{4} + \frac{3}{4} t - \frac{11}{16} t^2 + \frac{1}{4} t^2 \xi,
				& 3 < \xi \leq 4, \\
			- \frac{7}{4} - \frac{5}{4} t - \frac{3}{16} t^2 + \frac{1}{8} (t + 2)^2 \xi,
				& 4 < \xi \leq 5, \\
			-\frac{17}{4} + \frac{5}{4} t + \frac{7}{16} t^2 + \xi,
				& 5 < \xi,
		\end{cases}
			&\mbox\ 2 \leq t.\\
	\end{cases}
\end{equation}
One then finds
\begin{equation}\label{exmp:u}
	u(x,t) = 
	\begin{cases}
		\begin{cases}
			1 - t,
				& x \leq -1 + t - \frac{1}{2} t^2,\\
			\frac{t + 2x}{(t-2)},
				& -1 + t - \frac{1}{2} t^2 < x \leq - \frac{1}{2} + \frac{1}{2}t - \frac{3}{8}t^2,\\
			\frac{2 - t + 4x}{2t},
				& - \frac{1}{2} + \frac{1}{2}t - \frac{3}{8}t^2 < x \leq - \frac{1}{2} + \frac{1}{2}t - \frac{1}{8} t^2,\\
			\frac{2x}{t-2},
				& - \frac{1}{2} + \frac{1}{2}t - \frac{1}{8} t^2 < x \leq 0,\\
			\frac{2x}{t + 2},
				& 0 < x \leq \frac{1}{2} + \frac{1}{2}t + \frac{1}{8}t^2, \\
			\frac{-2 - t + 4x}{2t},
				& \frac{1}{2} + \frac{1}{2}t + \frac{1}{8}t^2 < x \leq \frac{1}{2} + \frac{1}{2}t + \frac{3}{8}t^2, \\
			\frac{t + 2x}{t+2},
				& \frac{1}{2} + \frac{1}{2}t + \frac{3}{8}t^2 < x \leq 1 + t + \frac{1}{2}t^2,\\
			1 + t,
				& 1 + t + \frac{1}{2}t^2 < x,
		\end{cases}
			&\mbox\ t < 2, \\
		\begin{cases}
			\frac{3}{4} - \frac{7}{8}t,
				& x \leq - \frac{3}{4} + \frac{3}{4}t - \frac{7}{16}t^2,\\
			\frac{t + 2x}{t - 2},
				& - \frac{3}{4} + \frac{3}{4}t - \frac{7}{16}t^2 < x \leq - \frac{1}{2} + \frac{1}{2}t - \frac{3}{8}t^2, \\
			\frac{2 - t + 4x}{2t},
				& - \frac{1}{2} + \frac{1}{2}t - \frac{3}{8}t^2 < x \leq - \frac{1}{2} + \frac{1}{2} t - \frac{1}{8}t^2, \\
			\frac{2x}{(t-2)},
				& - \frac{1}{2} + \frac{1}{2} t - \frac{1}{8}t^2 < x \leq - \frac{1}{4} + \frac{1}{4}t - \frac{1}{16}t^2, \\
			\frac{2 - t + 4x}{2(t+2)},
				& - \frac{1}{4} + \frac{1}{4}t - \frac{1}{16}t^2 < x \leq \frac{1}{4} + \frac{3}{4}t + \frac{1}{16}t^2, \\
			\frac{-2 - 3t + 8x}{4t},
				& \frac{1}{4} + \frac{3}{4}t + \frac{1}{16}t^2 < x \leq \frac{1}{4} + \frac{3}{4}t + \frac{5}{16}t^2, \\
			\frac{2 + t + 4x}{2(t+2)},
				& \frac{1}{4} + \frac{3}{4}t + \frac{5}{16}t^2 < x \leq \frac{3}{4} + \frac{5}{4}t + \frac{7}{16}t^2, \\
			\frac{5}{4} + \frac{7}{8}t,
				& \frac{3}{4} + \frac{5}{4}t + \frac{7}{16}t^2<x,
		\end{cases}
			&\mbox\ 2 \leq t,
	\end{cases}
\end{equation}
and, for $t >0$,
\begin{equation}\label{exmp:mu}
	\mu(t) = u_x^2(\cdot, t)\ dx 
	+ \frac{1}{4}(\delta_{-1} + \delta_{0})\bbo_{\{t=2\}}(t).
\end{equation}

On the other hand,
\begin{equation*}
	V_B(\xi ,t) 
	=
	\begin{cases}
		V_{B,0}(\xi), 
			&\mbox\ t < 2, \\ 
		\begin{cases}
			0,
				& \xi \leq -1, \\ 
			\frac{1}{4} + \frac{1}{4} \xi,
				& -1 < \xi \leq 0, \\
			\frac{1}{4} + \xi,
				& 0 < \xi \leq 1, \\
			1 + \frac{1}{4} \xi,
				& 1 < \xi \leq 2, \\
			\frac{11}{10} + \frac{1}{5} \xi,
				& 2 < \xi \leq \frac{9}{2}, \\
			-\frac{1}{4} + \frac{1}{2} \xi,
				& \frac{9}{2} < \xi \leq \frac{13}{2}, \\
			\frac{17}{10} + \frac{1}{5} \xi,
				& \frac{13}{2} < \xi \leq 9, \\
			\frac{7}{2},
				& 9 < \xi,
		\end{cases}
			&\mbox\ 2 \leq t, \\
	\end{cases}
\end{equation*}
\begin{equation*}
	U_B(\xi,t)
	=
	\begin{cases}
		\begin{cases}
			1 - t,
				& \xi \leq -1, \\ 
			\frac{1}{2} - \frac{3}{4} t + \frac{1}{4} (t-2) \xi,
				& -1 < \xi \leq 0, \\
			\frac{1}{2} - \frac{3}{4} t + \frac{1}{2} t \xi,
				& 0 < \xi \leq 1, \\
			1 - \frac{1}{2} t + \frac{1}{4} (t-2) \xi,
				& 1 < \xi \leq 2, \\
			-\frac{2}{5} - \frac{1}{5} t + \frac{1}{10} (t + 2) \xi,
				& 2 < \xi \leq \frac{9}{2}, \\
			\frac{1}{2} - \frac{7}{8} t + \frac{1}{4} t \xi,
				& \frac{9}{2} < \xi \leq \frac{13}{2}, \\
			- \frac{4}{5} + \frac{1}{10} t + \frac{1}{10} (t+2) \xi,
				& \frac{13}{2} < \xi \leq 9, \\
			1 + t,
				& 9 < \xi,
		\end{cases}
			&\mbox\ t < 2, \\
		\begin{cases}
				\frac{3}{4} - \frac{7}{8} t,
				& \xi \leq -1, \\ 
				\frac{1}{2} - \frac{3}{4} t + \frac{1}{8} (t-2) \xi,
				& -1 < \xi \leq 0, \\
				\frac{1}{2} - \frac{3}{4} t + \frac{1}{2} t \xi,
				& 0 < \xi \leq 1, \\
			\frac{3}{4} - \frac{3}{8} t + \frac{1}{8} (t-2) \xi,
				& 1 < \xi \leq 2, \\
			- \frac{3}{20} - \frac{13}{40} t + \frac{1}{10} (t+2) \xi,
				& 2 < \xi \leq \frac{9}{2}, \\
				\frac{3}{4} - t + \frac{1}{4} t \xi,
				& \frac{9}{2} < \xi \leq \frac{13}{2}, \\
			- \frac{11}{20} - \frac{1}{40} t + \frac{1}{10} (t+2) \xi,
				& \frac{13}{2} < \xi \leq 9, \\
				\frac{5}{4} + \frac{7}{8} t,
				& 9 < \xi,
		\end{cases}
			&\mbox\ 2 \leq t, \\
	\end{cases}
\end{equation*}
\begin{equation*}
	y_B(\xi,t)
	=
	\begin{cases}
		\begin{cases}
			t - \frac{1}{2}t^2 + \xi,
				& \xi \leq -1, \\ 
			-\frac{1}{2} + \frac{1}{2} t - \frac{3}{8} t^2 + \frac{1}{8} (t-2)^2 \xi,
				& -1 < \xi \leq 0, \\
			- \frac{1}{2} + \frac{1}{2} t - \frac{3}{8} t^2 + \frac{1}{4} t^2 \xi,
				& 0 < \xi \leq 1, \\
			-1 + t - \frac{1}{4} t^2 + \frac{1}{8} (t-2)^2 \xi,
				& 1 < \xi \leq 2, \\
			- \frac{2}{5} - \frac{2}{5} t - \frac{1}{10} t^2 + \frac{1}{20} (t+2)^2 \xi,
				& 2 < \xi \leq \frac{9}{2}, \\
			\frac{1}{2} + \frac{1}{2} t - \frac{7}{16} t^2 + \frac{1}{8} t^2 \xi,
				& \frac{9}{2} < \xi \leq \frac{13}{2}, \\
			-\frac{4}{5} - \frac{4}{5} t + \frac{1}{20} t^2 + \frac{1}{20} (t+2)^2 \xi,
				& \frac{13}{2} < \xi \leq 9, \\
			-8 + t + \frac{1}{2} t^2 + \xi,
				& 9 < \xi,
		\end{cases}
			&\mbox\ t < 2, \\
		\begin{cases}
			\frac{1}{4} + \frac{3}{4} t - \frac{7}{16} t^2 + \xi,
				& \xi \leq -1, \\ 
			-\frac{1}{2} + \frac{1}{2} t - \frac{3}{8} t^2 + \frac{1}{16} (t-2)^2 \xi,
				& -1 < \xi \leq 0, \\
			-\frac{1}{2} + \frac{1}{2} t - \frac{3}{8} t^2 + \frac{1}{4} t^2 \xi,
				& 0 < \xi \leq 1, \\
			-\frac{3}{4} + \frac{3}{4} t - \frac{3}{16} t^2 + \frac{1}{16} (t-2)^2 \xi,
				& 1 < \xi \leq 2, \\
			-\frac{13}{20} - \frac{3}{20} t - \frac{13}{80} t^2 + \frac{1}{20} (t+2)^2 \xi,
				& 2 < \xi \leq \frac{9}{2}, \\
			\frac{1}{4} + \frac{3}{4} t - \frac{1}{2} t^2 + \frac{1}{8} t^2 \xi,
				& \frac{9}{2} < \xi \leq \frac{13}{2}, \\
			-\frac{21}{20} - \frac{11}{20} t - \frac{1}{80} t^2 + \frac{1}{20} (t+2)^2 \xi,
				& \frac{13}{2} < \xi \leq 9, \\
			-\frac{33}{4} + \frac{5}{4} t + \frac{7}{16} t^2 + \xi,
				& 9 < \xi,
		\end{cases}
			&\mbox\ 2 \leq t, \\
	\end{cases}
\end{equation*}
and one sees that the transformation $M$ yields again $(u,\mu)$ given by ~\eqref{exmp:u} and \eqref{exmp:mu}.
\end{exmp}

\end{document}